\documentclass[11pt,reqno]{amsart} 
\setcounter{tocdepth}{2}

\usepackage{amsmath, amsthm, mathtools, amssymb}
\usepackage{mathrsfs} 
\usepackage{amscd,stmaryrd}
\usepackage{dirtytalk} 
\usepackage{parskip}
\usepackage{bbm}
\usepackage[english]{babel}
\usepackage[paperheight=11in, 
    paperwidth=8.5in,
    outer=1in,
    inner=1in,
    bottom=1in,
    top=1in]{geometry} 
\usepackage{eurosym}
\usepackage{enumitem}
\usepackage{array}
\usepackage{setspace}
\usepackage{url}
\usepackage[backref=page]{hyperref}
\hypersetup{
	colorlinks,
    citecolor=magenta,
    filecolor=magenta,
    linkcolor=blue,
    urlcolor=black
}
\usepackage{color}
\usepackage[dvipsnames]{xcolor}
\usepackage{textcase}
\usepackage{appendix}
\usepackage{tikz-cd}

% Fuzz and Page size------------------------------------------------------
\setlength{\textwidth}{6.5in} \setlength{\textheight}{8.5 in}
\setlength{\oddsidemargin}{0.0 in}
\setlength{\evensidemargin}{\oddsidemargin}
\hfuzz2pt 
\vfuzz1.5pt

% MATH -------------------------------------------------------------------

\newcommand{\eHK}[1]{e_\text{HK}\left(#1\right)}

\newcommand{\m}{\mathfrak{m}}
\newcommand{\n}{\mathfrak{n}}
\newcommand{\ue}{{\underline{e}}}
\newcommand{\uf}{{\underline{f}}}
\newcommand{\HK}{\text{HK}}
\newcommand{\hs}{\HK_1(S_{p,d})}
\newcommand{\hr}{\HK_1(R_{p,d})}

\newcommand{\qandq}{\quad\text{and}\quad}

\setlength{\parindent}{1em}

\newtheorem{theorem}{Theorem}[section]
\newtheorem{maintheorem}{Theorem}

\newtheorem{definition}[theorem]{Definition}
\newtheorem*{notation}{Notation}
\newtheorem{theoremdefinition}[theorem]{Theorem-Definition}
\newtheorem{corollary}[theorem]{Corollary}
\newtheorem{lemma}[theorem]{Lemma}
\newtheorem{proposition}[theorem]{Proposition}

\newtheorem{conjectureintro}{Conjecture}

\newtheorem{theoremintronumbered}{Theorem}

\newtheoremstyle{example}{10pt}{10pt}{}{}{\bfseries}{.}{.5em}{}
\theoremstyle{example}
\newtheorem{example}[theorem]{Example}

\theoremstyle{remark}
\newtheorem*{remark}{Remark}
\newtheorem*{remarks}{Remarks}
\newtheorem{claim}[theorem]{Claim}

\title{Strong Watanabe–Yoshida conjecture for complete intersections}
\author{Joel Castillo-Rey}
\address{BCAM – Basque Center for Applied Mathematics, Mazarredo 14, 48009 Bilbao, Basque Country – Spain}
\email{jcastillo@bcamath.org}
\date{}

\begin{document}
\begin{abstract}
In this paper, we prove that the $A_1$ singularities are characterised by the minimality of their Hilbert–Kunz multiplicity across singular complete intersections in every positive characteristic. In characteristics 2 and 3, we explicitly compute the Hilbert–Kunz functions of the $A_1$ and $A_2$ singularities.
\end{abstract}
\maketitle

\begin{spacing}{0.8}
\tableofcontents
\end{spacing}

\section*{Introduction}
Let $(R,\mathfrak{m})$ be a local Noetherian ring of positive characteristic $p$ and dimension $d\geq 0$. We denote by $\m^{[p^e]} = (a^{p^e}: a\in \m)$. The function introduced in \cite{k69},
\begin{gather*}
    \text{HK}_e(R):=\operatorname{length}_R{\left(\frac{R}{\m^{[p^e]}}\right)}
\end{gather*}
is called the Hilbert-Kunz function of $R$. Kunz also showed that this function detects singularity of local rings. Monsky \cite{monsky} proved that the limit
\begin{equation*}
    e_{\text{HK}}(R) = \lim_{e\rightarrow \infty} \frac{\HK_e(R)}{p^{ed}}
\end{equation*}
exists, and is at least one. This limit is called the Hilbert–Kunz multiplicity of $R$. Watanabe and Yoshida \cite{wy00} proved that an unmixed local ring with $\eHK{R}=1$ has to be regular. This number need not be an integer, but we know that it does not approach 1 arbitrarily when computed across all singularities of a fixed dimension and characteristic – see \cite{blien} and \cite{aben08}. In \cite{wy05}, Watanabe and Yoshida stated the following conjecture about the minimal value of the Hilbert–Kunz multiplicity over non-regular rings:

\begin{conjectureintro}[\cite{wy05}, \cite{yos}]
\label{watanabe_yoshida}
Let $d\geq 2$. Set $R_{p,d}:=\mathbb{F}_p[[x_0,...,x_d]]/(Q_d),$ where
\begin{equation*}
    Q_d = \begin{cases}x_0x_1+x_2x_3 + \dots + x_{d-1}x_d & \text{if $d$ is odd},\\
    x_0x_1 + x_2x_3 + \dots +x_{d-2}x_{d-1}+x_d^2 & \text{if $d$ is even}.\end{cases}
\end{equation*}
Let $(R,\m,k)$ be an unmixed non-regular local ring of characteristic $p$ and dimension $d$. Then
\begin{enumerate}
    \label{fr: the_inequalities}
    \item $e_{\text{HK}}(R)\geq e_{\text{HK}}(R_{p,d})$,
    \item if $\eHK{R}=\eHK{R_{p,d}}$, then $\widehat{R}\cong R_{p,d}\widehat{\otimes}_{\mathbb{F}_p} k$, i.e.~the completion of $R$ is isomorphic to the coordinate ring of a quadric hypersurface.
    \item $e_{\text{HK}}(R_{p,d})\geq 1+c_d$,
\end{enumerate}
where $c_d$ are the coefficients of the Taylor expansion of $\sec x + \tan x$.
\end{conjectureintro}
Note that by unmixed we mean that $\dim \widehat{R}/\mathfrak{p} = \dim \widehat{R}$ for every $\mathfrak{p}\in \text{Ass}(\widehat{R})$. 

The first two parts are what we refer to as the Strong Watanabe–Yoshida conjecture, since not only do they provide a lower bound for the Hilbert–Kunz multiplicity but also characterize the rings with minimal Hilbert–Kunz multiplicity.

As for the third part, it provides a characteristic-free lower bound for Hilbert–Kunz multiplicity, and can be understood as part of a series of questions concerning the behavior of $\eHK{R_{p,d}}$ that are interesting on their own. Trivedi \cite{tri} showed this for $p$ sufficiently large, and in a recent preprint, Meng \cite{m25} shows that the inequality holds for all $p\geq 3$. The inequality is motivated by Gessel and Monsky's result \cite{gm10} that $\lim_{p\rightarrow\infty}\eHK{R_{p,d}} = 1+c_d,$ and computer experiments. Moreover, Yoshida \cite{yos} conjectured that $\eHK{R_{p,d}}$ is a decreasing sequence on $p$, which Trivedi showed for $p$ large enough in \textit{loc. cit.} 

Different parts of the Watanabe–Yoshida conjecture have been proven, but we still lack a complete answer. The following are the existing results:
\vspace{3em}
\begin{theoremintronumbered}
    With the same notation as in the conjecture above, the following hold.
    \begin{enumerate}
    \item If $d\leq 7$, then $\eHK{R}\geq \eHK{R_{p,d}}\geq 1+c_d$ (\cite[Corollary 2.6, Theorems 3.1 and 4.3]{wy05},\cite[Theorem 5.2]{aben12},\cite[Section 4]{ac24})
    \item If $2\leq d \leq 4$, then $\eHK{R}=\eHK{R_{p,d}}$ if and only if $\widehat{R}\cong R_{p,d}$ (\cite[Corollary 2.6, Theorems 3.1 and 4.3]{wy05}).
    \item If $e(R)\leq 5$, then $\eHK{R}\geq \eHK{R_{p,d}}$, where $e(R)$ is the usual multiplicity of $R$ (\cite[Section 4]{aben12}).
    \item $\eHK{R_{p,d}}\geq 1+c_d$. Also, $\eHK{R_{p,d}}= 1+c_d$ if and only if $d\leq 4$ (\cite[Theorem 8.6]{tri},\cite[Theorem 7.8]{m25})
    \item For $p\gg 0$, $\eHK{R_{p,d}}$ is a decreasing function (\cite[Theorem 8.6]{tri})
    \item For $p>0$, if $R$ is a complete intersection, then $\eHK{R}\geq \eHK{R_{p,d}}$ (\cite[Theorem 4.6]{enshi})
    \end{enumerate}
\end{theoremintronumbered}

We want to focus on the result for complete intersections. In 2005, Enescu and Shimomoto proved the first inequality of the conjecture for complete intersections in characteristic $p>2$, and provided a stronger result for hypersurfaces of characteristic $p>3$:
\begin{theoremintronumbered}[Enescu–Shimomoto, \cite{enshi}, Theorems 3.4 and 4.6]
    \label{th: enescu_shimomoto_intro}
    Let $(R,\m,k)$ be a non-regular complete intersection of characteristic $p>2$ and dimension $d\geq 1$. Then, $\eHK{R}\geq \eHK{R_{p,d}}.$ Moreover, if $p>3$, $R$ is a hypersurface ring, then either $\widehat{R}\cong R_{p,d}\widehat{\otimes}_{\mathbb{F}_p} k$, or
    \begin{gather*}
    \eHK{R}\geq \eHK{S_{p,d}}\geq \eHK{R_{p,d}},
    \end{gather*}
    where $S_{p,d} = \mathbb{F}_p[[x_0,\dots,x_d]]/(x_0^2+\dots+x_{d-1}^2+x_d^3)$.
\end{theoremintronumbered}
The goal of this work is to extend and refine this result to settle the Strong Watanabe–Yoshida conjecture fully across complete intersections in every positive characteristic, which requires tackling the following problems: 

\begin{enumerate}
\item Proving that $\eHK{S_{p,d}}>\eHK{R_{p,d}}.$ In other words, showing that the Hilbert–Kunz multiplicity distinguishes the $A_1$ and $A_2$ singularities. (Corollary \ref{cr: A_2_A_1_ineq_large_char} and Theorem \ref{th: A_1_A_2_ineq_char_3})
\item Generalizing the inequality $\eHK{R}\geq \eHK{S_{p,d}}$ to singular complete intersections that are not isomorphic to an $A_1$ singularity (Theorem \ref{th: reduction_to_hypersurface})
\item Extending all the results to characteristic 2, with the definition of $A_2$ singularity appropriate for this case. (Definition \ref{df: Qd}, Theorem \ref{th: enescu_shimomoto_char_2} and Theorem \ref{th: A_1_A_2_ineq_char_2})
\end{enumerate}
Settling these problems yields the main theorem of this work:
\begin{maintheorem}[Corollary \ref{maintheoremA}]
    Let $(R,\m,k)$ be a non-regular complete intersection of characteristic $p>0$. Then, either $\widehat{R}\cong R_{p,d}\widehat{\otimes}_{\mathbb{F}_p} k$, or
    \begin{gather*}
    \eHK{R}\geq \eHK{S_{p,d}}>\eHK{R_{p,d}},
    \end{gather*}
for every $p>0$ and $d\geq 2$. 
In particular, the Strong Watanabe–Yoshida conjecture holds for complete intersections.
\end{maintheorem}

The cases of characteristic $p>3$ and $p = 2,3$ are approached very differently since in the case of $p=2,3$ we can explicitly compute the Hilbert–Kunz function and multiplicity of the $A_1$ and $A_2$ singularities:
\begin{maintheorem}[Theorems \ref{th: HK_function_A_1_char_2}, \ref{th: HK_function_A_2_char_2}, \ref{th: eHK_A_1_and_A_2_in_char_3}]
    For $d\geq 1$,
        \begin{gather*}
            \eHK{R_{2,d}} = \begin{dcases}
                \frac{2^m}{2^m-1} & (d=2m-1) \\
                \frac{2^m+1}{2^m} & (d=2m)\\
            \end{dcases}\qandq\eHK{S_{2,d}} = \begin{dcases}
                \frac{2^{m-1}+1}{2^{m-1}} & (d=2m-1) \\
                \frac{2^{m+1}+1}{2^{m+1}-1} & (d=2m) \\
            \end{dcases}
        \end{gather*}
        and 
        \begin{gather*}
        \eHK{R_{3,d}} = 1+\left(\frac{2}{3}\right)^{d-1} \qandq \eHK{S_{3,d}} = 1+\frac{3\cdot 2^d}{3^{d+1}-2^d+(-1)^d}.
        \end{gather*}
    \end{maintheorem}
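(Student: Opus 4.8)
The plan is to compute each Hilbert–Kunz multiplicity as a limit of the Hilbert–Kunz function, which for a hypersurface $A=\F_p[[x_0,\dots,x_d]]/(f)$ I would first rewrite as a kernel dimension. Writing $S=\F_p[[x_0,\dots,x_d]]$, $q=p^e$ and $\m^{[q]}=(x_0^q,\dots,x_d^q)$, the exact sequence $0\to S/(\m^{[q]}:f)\xrightarrow{\cdot f} S/\m^{[q]}\to S/(f,\m^{[q]})\to 0$ gives $\HK_e(A)=\operatorname{length}(S/(f,\m^{[q]}))=\dim_k\ker\big(\cdot f\colon S/\m^{[q]}\to S/\m^{[q]}\big)$, i.e.\ the number of Jordan blocks of the nilpotent operator ``multiplication by $f$''. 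Since $Q_d$ and $P_d$ are sums of forms in disjoint blocks of variables, $S/\m^{[q]}$ is the tensor product of the corresponding Artinian pieces and $\cdot f$ is the associated tensor sum of commuting nilpotents; by the Han–Monsky representation ring the Jordan type of $\cdot f$ is the product (in that ring) of the Jordan types of the summand operators, so $\HK_e$ becomes a purely combinatorial count of blocks of an explicit product of partitions.

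The building blocks are simple and reveal why characteristics $2$ and $3$ are the right ones. A direct chain computation shows that multiplication by $x_ix_{i+1}$ on $k[x_i,x_{i+1}]/(x_i^q,x_{i+1}^q)$ has Jordan type $T_q=\{q,(q-1)^2,\dots,1^2\}$, independently of the characteristic, and that the number of blocks of a product of two single blocks $[a]\ast[b]$ equals $\min(a,b)$, again characteristic-free. The characteristic enters only through the pure powers: $x^2$ is a Frobenius power exactly when $p=2$, in which case it splits into two equal blocks of size $q/2$, while $x^3$ splits into three equal blocks of size $q/3$ exactly when $p=3$; for odd $p$ the square instead gives the unequal pair $\{(q+1)/2,(q-1)/2\}$. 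For $p$ odd I would moreover diagonalize the quadratic part over $\F_p$ (legitimate since $2\in\F_p^\times$ and an $\F_p$-linear change of coordinates preserves $\m^{[q]}$, because $a^q=a$ for $a\in\F_p$); this identifies $R_{p,d}$ with a diagonal sum of squares and $S_{p,d}$ with $y_1^2+\dots+y_d^2+y_{d+1}^3$, so that in characteristic $3$ both rings are diagonal hypersurfaces and the Han–Monsky / Gessel–Monsky machinery for $\sum y_i^{a_i}$ (cf.\ \cite{gm10}) applies.

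With these inputs the computation organizes around the characteristic-free iterated product $T_q^{\ast m}$: in every case the relevant Jordan type is $T_q^{\ast m}$ times at most one Frobenius-split factor ($x^2$ for the even quadric in characteristic $2$, $x^3$ for the even cubic in characteristic $3$, and two such factors in the odd cases). I would set up an induction on $d$ in steps of two, each step multiplying by one more factor $T_q$, propagating the data needed to recount blocks via $\#\mathrm{blocks}(\Lambda\ast M)=\sum_{a\in\Lambda,\,b\in M}\min(a,b)=\sum_{\ell\ge 1}A_\Lambda(\ell)\,A_M(\ell)$, where $A_\Lambda(\ell)=\#\{\text{parts of }\Lambda\ge\ell\}$. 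In characteristic $2$ this yields the exact Hilbert–Kunz function matching the stated formulas, and dividing by $q^d$ and letting $e\to\infty$ produces the multiplicities; in characteristic $3$ the same bookkeeping, fed into the diagonal-hypersurface computation, gives $\eHK{R_{3,d}}$ and $\eHK{S_{3,d}}$. A routine comparison then confirms the strict inequality $\eHK{S_{p,d}}>\eHK{R_{p,d}}$.

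The main obstacle is controlling the full block-size distribution of $T_q^{\ast m}$, not merely the number of blocks, through the induction: recounting after multiplying by the next factor requires the entire tail-count $A_{T_q^{\ast m}}(\ell)$ of the previous stage. Concretely, the difficulty is resolving the characteristic-$p$ ``carries'' in the Han–Monsky product, the places where the exponents $2$ and $3$ interact with $p=2$ and $p=3$ and the Jordan sizes depart from the naive Clebsch–Gordan pattern; extracting from them the clean closed forms (in particular the rational function $1+\tfrac{3\cdot 2^d}{3^{d+1}-2^d+(-1)^d}$) is the technical heart. Once the stable normalized distribution of $T_q^{\ast m}$ is pinned down, the remaining limits and the parity casework are mechanical.
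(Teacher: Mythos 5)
Your setup is the paper's setup in different clothing: counting Jordan blocks of multiplication by $f$ is exactly the Han--Monsky function $\alpha$, your $T_q$ is the paper's $\mu_e=\delta_{2^e}+2\delta_1+\dots+2\delta_{2^e-1}$ (Lemma \ref{lm: M_e_over_delta_basis}), and your tensor decomposition into $x_ix_{i+1}$-factors and pure-power factors is Propositions \ref{pr: A_1_in_representation_ring} and \ref{pr: A_2_in_representation_ring}. The two counting facts you prove are correct and characteristic-free, but they are not enough, and the gap sits exactly where you yourself place it: $\alpha(\delta_a\delta_b)=\min(a,b)$ together with $\alpha(\Lambda\ast M)=\sum_{\ell}A_\Lambda(\ell)A_M(\ell)$ lets you count the blocks of a product of \emph{two} objects whose block distributions are already known, but it does not determine the block \emph{distribution} of that product, which is what the next step of your induction consumes. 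That distribution is genuinely characteristic-dependent --- in characteristic $2$ one has $\delta_3^2=2\delta_4+\delta_1$, whereas the Clebsch--Gordan pattern gives $\delta_5+\delta_3+\delta_1$ --- so the induction on $d$ cannot close; you label this ``the main obstacle'' and ``the technical heart'' and never resolve it, which means no formula is actually derived. This unresolved step is precisely what the paper's characteristic-$2$ machinery supplies: the orthogonal idempotents $\sigma_{k,e}\in\Lambda_e\otimes_{\mathbb{Z}}\mathbb{Q}$ (Proposition \ref{pr: sigmaproperties}) diagonalize the multiplication, so that $\mu_e=2^e(\sigma_{0,e}+2^e\sigma_{1,e}+\sum_{k=1}^{e-1}2^k\sigma_{2^k+1,e})$ (Proposition \ref{pr: M_e_over_sigma_basis}), powers $\mu_e^m$ are obtained by raising coefficients to the $m$-th power, and $\alpha(\sigma_{k,e})=\tfrac{1}{2^e}$ plus the coefficients of $3\delta_{2^e/3}$ on this basis (Lemmas \ref{lm: A2_char2_recursion} and \ref{lm: C_e_critical_coefficients}) make the closed formulas drop out. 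Without an analogue of this diagonalization (or the paper's recursions for $\mu_e$ and $\xi_e$), your plan is a correct strategy outline with the theorem-sized hole in the middle.

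Two further points. In characteristic $3$ the paper does not run the diagonal-hypersurface algorithm you appeal to: it quotes Yoshida's values for $\eHK{R_{3,d}}$ and $\HK_1(R_{3,d-1})=2^{d-1}+3^{d-1}$, and for $S_{3,d}$ it exploits that $x_d^3$ is a Frobenius power, so $k[[x_0,\dots,x_{d-1},x_d^3]]\subset k[[x_0,\dots,x_d]]$ is free of rank $3$ and $\HK_e(S_{3,d})=3\cdot 3^{(e-1)d}\HK_1(R_{3,d-1})$ --- a short reduction that bypasses the machinery entirely and that your framework could also have used. Finally, had you pushed the computation through, you would have found that the statement as printed has the two characteristic-$3$ formulas interchanged: the paper's own Theorem \ref{th: eHK_A_1_and_A_2_in_char_3}, and the inequality $\eHK{S_{3,d}}>\eHK{R_{3,d}}$ of Theorem \ref{th: A_1_A_2_ineq_char_3}, give $\eHK{S_{3,d}}=1+\left(\tfrac{2}{3}\right)^{d-1}$ and $\eHK{R_{3,d}}=1+\tfrac{3\cdot 2^d}{3^{d+1}-2^d+(-1)^d}$; for $d=2$ the latter equals $\tfrac{3}{2}$, the correct value for the $A_1$ surface singularity. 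So a proof of the displayed statement verbatim is impossible, and a complete argument would have had to flag this swap.
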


    Theorem \ref{th: HK_function_A_1_char_2} confirms Yoshida's conjecture on the formula of the Hilbert–Kunz function of $\eHK{R_{2,d}}$. He already computed it in the odd-dimensional case with a trick that we will exploit here to quickly compute $\eHK{S_{p,d}}$ in characteristic 3 (see §\ref{subsec:A2_in_char_3}). These computations happen to shed some light on Conjecture 2.7 in \cite{jnskw}, which proposed a better lower bound than the one claimed by Watanabe and Yoshida's conjecture, but Theorem \ref{cor: better_char_free_lower_bound} shows that it only holds in low dimension.

    Finally, after slight refinements of the previous results on the conjecture, it is also possible to settle its stronger form across low-dimensional or low-multiplicity singularities, as well as extend them to the previously unaddressed characteristic 2 case:

    \begin{maintheorem}[Theorem \ref{th: watanabe_yoshida_low_dimension}, \ref{cor: watanabe_yoshida_low_multiplicity}]
        The Strong Watanabe–Yoshida conjecture holds in dimension $d\leq 6$ or multiplicity $e(R)\leq 5$, in characteristic $p>0$. Also if $d=7$ and $p>2$.
    \end{maintheorem}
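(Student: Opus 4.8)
The plan is to bootstrap the corollary from the main theorem, which already settles both halves of the strong Watanabe–Yoshida conjecture for complete intersections in every characteristic; the only remaining task is to reduce an arbitrary unmixed non-regular $R$, within the stated ranges, to the complete intersection case. I would treat the two halves of the conjecture in turn: first the inequality $\eHK R\geq\eHK{R_{p,d}}$, then the characterization of the equality case.

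For the inequality, in dimension $d\leq 7$ with $p>2$ it is already part (1) of the cited low-dimension theorem, and for $e(R)\leq 5$ with $p>2$ it is part (3), so no new argument is needed there. The genuinely new cases are $p=2$ and $p=3$ with $d\leq 6$, where those results do not apply. Here I would reduce to the complete intersection case by a characteristic-free multiplicity estimate and then invoke the main theorem, which already yields $\eHK R\geq\eHK{S_{p,d}}>\eHK{R_{p,d}}$ for complete intersections in these characteristics; the closed-form values $\eHK{R_{2,d}}$ and $\eHK{R_{3,d}}$ of Theorem B enter when the multiplicity estimate has to be certified numerically.

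For the equality characterization, suppose $R$ is unmixed, non-regular, and $\eHK R=\eHK{R_{p,d}}$. A lower bound for $\eHK R$ in terms of $e(R)$ and $d$, combined with the smallness of $\eHK{R_{p,d}}$, forces $e(R)=2$; by the classification of unmixed rings of multiplicity two, the completion $\hat R$ is then a hypersurface, in particular a complete intersection, so the main theorem applies. Since every non-regular complete intersection other than $R_{p,d}$ satisfies $\eHK{\cdot}\geq\eHK{S_{p,d}}>\eHK{R_{p,d}}$, the equality $\eHK R=\eHK{R_{p,d}}$ can hold only when $\hat R\cong R_{p,d}$ (equivalently $R_{p,d}\hat{\otimes}_{\F_p}k$ after accounting for the residue field). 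The decisive new ingredient is precisely this \emph{strict} inequality: Enescu–Shimomoto proved only $\eHK{S_{p,d}}\geq\eHK{R_{p,d}}$, which cannot separate the $A_1$ and $A_2$ singularities and so leaves the equality case open even for complete intersections.

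The main obstacle I anticipate is not conceptual but the sharpness of the multiplicity estimate in the top dimensions together with the low characteristics. Forcing $e(R)=2$ comes down to an inequality of the shape $\phi(d,3)>\eHK{R_{p,d}}$ for the relevant lower bound $\phi(d,e)$, and this margin narrows as $d$ grows; it is exactly this comparison that breaks down at $d=7$ in characteristic $2$, which is why dimension $7$ is allowed only for $p>2$, where part (1) of the low-dimension theorem supplies the inequality instead. Checking these estimates against the explicit formulas of Theorem B, and confirming in each residual small-multiplicity case that the completion is genuinely a hypersurface, is the part that requires care rather than a new idea.
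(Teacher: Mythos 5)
Your overall architecture is the paper's own: dispose of non-complete-intersections by characteristic-free estimates, and finish complete intersections with the new strict inequality $\eHK{S_{p,d}}>\eHK{R_{p,d}}$ of Theorems \ref{maintheoremA} and \ref{maintheoremB}. But the step your equality argument leans on --- that a lower bound for $\eHK{R}$ in terms of $e(R)$ and $d$ forces $e(R)=2$ --- is a genuine gap, and it is not how the paper argues. There is no unconditional characteristic-free estimate that excludes multiplicities $3$ and $4$ in this range: for $d=6$ you would need every unmixed non-regular ring with $e(R)=3$ to satisfy $\eHK{R}>\eHK{R_{2,6}}=\frac{9}{8}$ by numerics alone. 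The paper's Claim \ref{cl: low_multiplicity} instead runs structure theory: if $e(R)=3$ and $\eHK{R}<\frac{3}{2}$, then $R$ is Gorenstein (Theorem 2.4(iii) of \cite{aben12}); a Gorenstein non-complete-intersection has $\eHK{R}\geq\frac{13}{8}$ (Theorem 4.1 of \cite{aben12}); what survives is a complete intersection \emph{of multiplicity $3$} --- not a multiplicity-two hypersurface reachable by your classification step --- and only Theorem \ref{maintheoremB} then eliminates it (the conclusion $e(R)=2$ is an output of the main theorem, since $R_{p,d}$ has multiplicity $2$, not an input to it). The case $e(R)=4$ is analogous, and for $d=5,6$ one further needs the Aberbach--Enescu dichotomy (Cohen--Macaulay of minimal multiplicity versus $\mu(\m/J^*)\leq e(R)-2$) before any numerical bound applies at all. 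Even your multiplicity-two step is not pure classification: one needs $\eHK{R}<2$ together with Corollary 1.10 of \cite{wy01} to get Cohen--Macaulayness of an unmixed ring before concluding it is a hypersurface.

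You also underestimate the numerical side, which is the bulk of the paper's proof of Theorem \ref{th: watanabe_yoshida_low_dimension}. Since $\eHK{R_{2,d}}>\eHK{R_{p,d}}$ for odd $p$ (Table \ref{tab: eHK_quadrics_low_dimension}), every published bound --- calibrated against the smaller $p>2$ thresholds or against $1+c_d$ --- must be re-verified against the larger characteristic-$2$ value: the paper reruns the Watanabe--Yoshida volume estimates with new parameters (e.g.\ $s=2.1$ for $d=4$, $e=3$), compiles Table \ref{tab: e_vs_s_dimension_6} covering $5\leq e(R)\leq 810$ in dimension $6$, and patches dimension $5$ at $e(R)=137$, where the crude bound $e(R)/d!=137/120$ falls below $\eHK{R_{2,5}}=8/7$. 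Two smaller slips: the inequality for $p=3$, $d\leq 6$ is not new (it is covered by the cited $p>2$ results; what is new there is the equality characterization); and for $d=7$, $p>2$, part (1) of the theorem you cite yields only the inequality --- the strong form still requires the sharpness of the Aberbach--Cox-Steib bounds of \cite{ac24} for non-complete-intersections together with Theorem \ref{maintheoremA}, which is exactly what the paper's final theorem supplies.
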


Let us now lay out the structure of this paper. First, Section \ref{sec:han_monsky_ring} provides some preliminaries on the Han–Monsky ring that will be used in the following two sections, and introduces a new technique that we will exploit in the characteristic 2 case. Sections \ref{sec:strongwatanabeyoshida} and \ref{sec:HKmultiplicity_A1_and_A2_char_2_3} are devoted to showing that the $A_1$ and $A_2$ singularities indeed have different Hilbert–Kunz multiplicity. The Hilbert–Kunz functions of the $A_1$ and $A_2$ singularities in characteristic $2$ and $3$ are explicitly computed in the latter. In Section \ref{sec: wyc_revisited}, we finally settle the Strong Watanabe–Yoshida conjecture for complete intersections, leveraging on the inequalities proven in the previous sections, refining Enescu and Shimomoto's reduction from complete intersections to hypersurfaces, and extending their results to characteristics 2 and 3. Finally, in Section \ref{sec:revisiting_literature}, we will revisit the literature on low-dimensional and low-multiplicity cases of the conjecture.

\textbf{Acknowledgments.} I wish to thank Ilya Smirnov for introducing me to the subject and guiding me through the project. I also wish to thank Kriti Goel for useful discussions and Devlin Mallory for answering my questions and helping with the manuscript.

The author was supported by the grants SEV-2023-2026 from the Spanish Ministry of Science and Innovation, RYC2020-028976-I funded by MICIU/AEI/10.13039/501100011033, and EI ESF "ESF Investing in your future".

\section{The Han–Monsky ring and the Han–Monsky algorithm}\label{sec:han_monsky_ring}
The primary tool we will employ for the main result of this paper is Han and Monsky's representation ring, also called the Han–Monsky ring. This ring is the Grothendieck group of finitely generated $k[T]$-modules where the action of $T$ is nilpotent. It was introduced in \cite{hanmonsky} to compute the Hilbert–Kunz function of diagonal hypersurfaces. In §\ref{subsec: han_monsky_definitions}, we introduce the basic definitions. In §\ref{subsec: hmring_char_2}, we focus on the characteristic 2 case and introduce a new tool for studying the Han–Monsky ring, which proves especially useful in this low characteristic case. Finally, in §\ref{subsec: hmalgorithm}, we explicitly describe the Han–Monsky algorithm, the method for computing the Hilbert–Kunz function of a diagonal hypersurface.

\subsection{Definitions}\label{subsec: han_monsky_definitions}
This exposition of the Han–Monsky ring is strongly based on Yoshida's unpublished notes \cite{yos}.
\begin{definition}[$k$-object]
Let $k$ be a field. A $k$-object is a finitely generated $k[T]$-module $M$ such that the action of $T$ over $M$ is nilpotent.
\end{definition}
\begin{notation}
    \label{not: notation_k_objects_as_pairs}
We often obtain $k$-objects from taking a zero-dimensional finitely generated $k$-algebra $R$ and defining the $T$-action by multiplication by an element $f\in R$. We will often use the notation $(R,f)$ for such $k$-objects.
\end{notation}

\begin{proposition}[Definition 1.3, \cite{hanmonsky}]
    \label{pr: product_and_sum_in_rep_ring}
Let $M,N$ be two $k$-objects. The $k$-vector spaces $M\otimes_k N$ and $M\oplus N$ can be regarded as $k$-objects if we define
\begin{align*}
    T\cdot (m\oplus n) &:= Tm\oplus Tn\\
    T\cdot(m\otimes_k n) &:= Tm\otimes_k n+m\otimes_k Tn.
\end{align*}
\end{proposition}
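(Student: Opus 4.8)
The plan is to verify, in each of the two cases, the two defining conditions of a $k$-object: that the proposed formula for the $T$-action is a well-defined $k$-linear endomorphism of the underlying $k$-vector space (which is exactly what a $k[T]$-module structure amounts to), and that this endomorphism is nilpotent. I would first record the useful reduction that, since $T$ acts nilpotently on the finitely generated $k[T]$-modules $M$ and $N$, both are annihilated by a power of $T$ and hence are modules over some $k[T]/(T^m)$; being finitely generated over this finite-dimensional $k$-algebra, they are finite-dimensional over $k$. Consequently $M\oplus N$ and $M\otimes_k N$ are finite-dimensional over $k$, so finite generation over $k[T]$ is automatic in both cases and only $k$-linearity, well-definedness, and nilpotency remain to be checked. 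Throughout I write $T_M$ and $T_N$ for the given actions and fix $a,b$ with $T_M^a=0$ and $T_N^b=0$.

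The direct sum is the easy case. The assignment $m\oplus n\mapsto T_M m\oplus T_N n$ is manifestly $k$-linear, so it makes $M\oplus N$ a $k[T]$-module, and iterating gives $T^n(m\oplus n)=T_M^n m\oplus T_N^n n$. This vanishes as soon as $n\geq\max(a,b)$, so $T^{\max(a,b)}=0$ and $M\oplus N$ is a $k$-object.

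For the tensor product the first task is well-definedness, since the rule is prescribed only on simple tensors. I would check that the map $M\times N\to M\otimes_k N$ sending $(m,n)$ to $T_M m\otimes n+m\otimes T_N n$ is $k$-bilinear: additivity in each argument is immediate, and $k$-homogeneity uses that $T_M$ and $T_N$ are $k$-linear together with the fact that the tensor product is taken over $k$. The universal property of $\otimes_k$ then yields a unique $k$-linear endomorphism $T$ realizing the stated formula, so $M\otimes_k N$ becomes a $k[T]$-module. For nilpotency, the key observation is that $T=(T_M\otimes\mathrm{id})+(\mathrm{id}\otimes T_N)$ and that these two summands commute, so the binomial theorem applies:
\[
T^n=\sum_{i=0}^{n}\binom{n}{i}\left(T_M^{\,i}\otimes T_N^{\,n-i}\right).
\]
Taking $n=a+b-1$, every summand vanishes, since $i\geq a$ forces $T_M^{\,i}=0$ while $i<a$ forces $n-i\geq b$ and hence $T_N^{\,n-i}=0$. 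Thus $T^{a+b-1}=0$.

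The only genuinely non-formal step is the well-definedness of the tensor action, which I expect to be the point requiring the most care; it is dispatched by the universal property once bilinearity is confirmed. The nilpotency argument, though it is the conceptual heart of why the construction respects the nilpotence condition, reduces to a routine application of the binomial theorem made possible by the commutativity of the two constituent operators $T_M\otimes\mathrm{id}$ and $\mathrm{id}\otimes T_N$.
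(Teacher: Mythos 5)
Your proposal is correct and complete: the paper itself states this proposition without proof, citing it as Definition~1.3 of Han--Monsky, so the verification is left implicit, and yours supplies exactly the standard argument one would expect there. The three points you isolate --- automatic finite generation via finite $k$-dimension, well-definedness of the tensor action through $k$-bilinearity and the universal property, and nilpotency of $T = T_M\otimes\mathrm{id} + \mathrm{id}\otimes T_N$ via the binomial theorem for the two commuting summands (with the exponent $a+b-1$, valid in any characteristic since each term $T_M^{\,i}\otimes T_N^{\,n-i}$ vanishes outright) --- are precisely what needs checking, and each is handled correctly.
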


\begin{example}
    \label{ex: example_diagonal_hypersurface_k_object}
    Take the $k$-object $k[x,y]/(x^a,y^b)$ with $T$-action $x^c+y^d$. Then,
    \begin{gather*}
        \frac{k[x,y]}{(x^a,y^b)} = \frac{k[x]}{(x^a)}\otimes \frac{k[y]}{(y^b)},
    \end{gather*}
    where the $k$-objects on the right side have $T$-action $x^c$ and $y^d$, respectively. Generally, any $T$-action given by a diagonal equation will split this way.
\end{example}
We introduce the following equivalence relation over the pairs of $k$-objects: Let $(M_1,N_1)$ and $(M_2,N_2)$ be two pairs of $k$-objects. Then,
\begin{equation*}
    (M_1,N_1)\sim (M_2,N_2)\Leftrightarrow M_1 \oplus N_2 \cong M_2\oplus N_1, \text{ as $k[T]$-modules.}
\end{equation*}
The formal difference $M-N$ denotes the equivalence class containing $(M,N)$. In other words, this is the structure of the Grothendieck ring on the free monoid generated by the equivalence classes of $k$-objects.
\begin{definition}[Han–Monsky ring]
Let
\begin{equation*}
    \Gamma_k = \lbrace M- N\mid M \text{ and } N \text{ are $k$-objects}\rbrace,
\end{equation*}
and define $+$ and $\cdot$ in $\Gamma_k$ as follows:
\begin{align*}
    (M_1-N_1)+(M_2-N_2) &= (M_1\oplus M_2) - (N_1\oplus N_2),\\
    (M_1-N_1)\cdot (M_2-N_2) &= \left[(M_1\otimes M_2)\oplus (N_1\otimes N_2)\right] - \left[(M_1\otimes N_2)\oplus (M_2\otimes N_1)\right].
\end{align*}
This way we have a commutative ring with unity $k-0$, called the Han–Monsky ring over $k$.
\end{definition}
Usually, there is no ambiguity in denoting $\Gamma = \Gamma_k$, because $\Gamma$ only depends on the characteristic of the field, which is usually clear from the context.

\begin{definition}
Let $\delta_i := k[T]/(T^i),$ for every $i\geq 1$. We call these elements the $\delta$-basis.
\end{definition}

The fact that the $\delta$-basis indeed forms a $\mathbb{Z}$-basis is a rewording of the structure theorem for finitely generated modules over a PID:

\begin{corollary}
    \label{cor: delta_basis_is_a_basis}
The abelian group $\Gamma$ is a free $\mathbb{Z}$-module where $\delta_i$ is a free $\mathbb{Z}$-basis.
\end{corollary}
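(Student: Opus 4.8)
The plan is to reduce everything to the structure theorem for finitely generated modules over the principal ideal domain $k[T]$. Since $k$ is a field, $k[T]$ is a PID, so any finitely generated $k[T]$-module splits as a free part plus a finite direct sum of cyclic torsion modules $k[T]/(q^n)$ indexed by irreducible polynomials $q$. For a $k$-object $M$ the action of $T$ is nilpotent, say $T^N$ annihilates $M$ (a uniform $N$ exists by finite generation), so $M$ is a module over $k[T]/(T^N)$. This forces the free part to vanish — $k[T]$ is not killed by any power of $T$ — and forces each irreducible $q$ that occurs to equal $T$ up to a unit, since $k[T]/(q^n)$ is annihilated by $T^N$ only if $q^n \mid T^N$. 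Hence
\begin{gather*}
M \cong \bigoplus_{i \geq 1} \delta_i^{\oplus a_i(M)}, \qquad a_i(M) \geq 0,
\end{gather*}
with only finitely many $a_i(M)$ nonzero, and the uniqueness clause of the structure theorem (equivalently Krull–Schmidt) makes each multiplicity $a_i(M)$ an invariant of the isomorphism class of $M$.

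First I would record that this makes the commutative monoid of isomorphism classes of $k$-objects under $\oplus$ the \emph{free} commutative monoid on $\{\delta_i : i \geq 1\}$: existence of the decomposition shows the $\delta_i$ generate, and uniqueness of the multiplicities shows there are no relations. Next I would invoke group completion. By construction $\Gamma$ is precisely the Grothendieck group of this monoid, namely formal differences $M - N$ modulo the stated equivalence $\sim$, and the group completion of a free commutative monoid on a set $B$ is the free abelian group on $B$. Applying this with $B = \{\delta_i\}$ gives that $\Gamma$ is a free $\mathbb{Z}$-module with the $\delta_i$ as a basis. Concretely, the map $(a_i) \mapsto \sum_i a_i \delta_i$ (finite support) is an isomorphism $\mathbb{Z}^{(\mathbb{N})} \xrightarrow{\sim} \Gamma$: surjectivity holds because every $k$-object is a nonnegative combination of the $\delta_i$ and every element of $\Gamma$ is a difference of $k$-objects, while injectivity is the faithfulness of the multiplicity functions $a_i$ on $\sim$-classes.

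There is no serious obstacle here; the entire content is the structure theorem over $k[T]$. The one point deserving a line of care is that the equivalence relation $\sim$ does not collapse distinct integer combinations, i.e.\ that $M_1 \oplus N_2 \cong M_2 \oplus N_1$ implies $a_i(M_1) - a_i(N_1) = a_i(M_2) - a_i(N_2)$ for every $i$; this follows at once from additivity of the multiplicities $a_i$ under $\oplus$ together with their invariance under isomorphism.
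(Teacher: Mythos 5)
Your proof is correct and follows exactly the route the paper intends: the paper's entire justification is the remark that the corollary ``is a rewording of the structure theorem for finitely generated modules over a PID,'' and your argument simply fills in the details of that rewording (nilpotency killing the free part and forcing the prime $T$, uniqueness of multiplicities, and group completion of the free commutative monoid). Nothing to correct.
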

\begin{remark}
Let $k[x]/(x^a)$ be a $k$-object with $T$-action $x^c$. Let $a = ck+r$. Then, one can decompose $k[x]/(x^a)$ as $(c-r)\delta_k+ r\delta_{k+1}$. Looking back at Example \ref{ex: example_diagonal_hypersurface_k_object}, if $k[y]/y^b$ has $T$-action $x^d$ and $b = dl+s$, then $k[x,y]/(x^a,y^b)$ with $T$-action $x^c+y^d$ decomposes in the Han–Monsky ring as 
\begin{gather*}
    \left((b-r)\delta_k+r\delta_{k+1}\right)\left((d-s)\delta_l+s\delta_{l+1}\right)
\end{gather*}
Therefore, understanding the products $\delta_i\delta_j$ is crucial to get to a decomposition over the $\delta$-basis.
\end{remark}

To understand the multiplication table of $\Gamma$, a different $\mathbb{Z}$-basis is introduced, which has a simpler behavior.
\begin{definition}[$\lambda$-basis]
    \label{def: lambda}
    Let $\lambda_i := (-1)^i(\delta_{i+1}-\delta_i), i\geq 0$, noting that $\delta_0 = 0$ in $\Gamma$.
\end{definition}
\begin{remark}
    Note that $\delta_i = \lambda_0 - \lambda_1 + \dots + (-1)^i\lambda_{i-1},\text{ for } i\geq 1.$
\end{remark}
To finish, we define a $\mathbb{Z}$-linear function over $\Gamma$. 
\begin{definition}
    \label{def: alpha_D_k_and_ell_k}
    Let $M-0 = \bigoplus \delta_i^{\oplus a_i}$, and define
\begin{equation*}
    \alpha(M-0) := \dim_k M/TM = \sum_{i=1}^\infty a_i
\end{equation*}
Extending by linearity to all pairs $M-N$, we obtain a $\mathbb{Z}$-linear function $\alpha:\Gamma\rightarrow \mathbb{Z}$. For $s,a_i,b_i\in \mathbb{Z}$ with $s\geq 1$, $a_i\geq 1$, and $b_i\geq 0$, we denote
\begin{gather*}
D_k(a_1,\dots,a_s) := \alpha(\prod_{i=1}^s\delta_{a_i}) \hspace{4em} \ell_k(b_1,\dots,b_s) := \alpha(\prod_{i=1}^s\lambda_{a_i}).
\end{gather*}
\end{definition}
\begin{remark}
Note that $\alpha(\delta_j) = 1, \forall j\geq 1$, and
\begin{align*} 
\alpha(\lambda_i) = \begin{cases} 1 & (i=0),\\ 0 & (i\geq 1).\end{cases}
\end{align*}
    Now, let $R = k[[x_1,\dots,x_s]]/(f)$ be a characteristic $p>0$ hypersurface. Then 
    \begin{gather*}
        \HK_e(R) = \dim_k\left(\frac{k[[x_1,\dots,x_s]]}{(f)+\mathfrak{m}^{[p^e]}}\right).
    \end{gather*}
    Observe that we can regard $\mu_e := k[[x_1,\dots,x_s]]/\mathfrak{m}^{[p^e]}$ as a $k$-object with $T$-action $f$, so that $\HK_e(R) = \dim_k(\mu_e/T\mu_e).$ If $\mu_e = \sum a_i\delta_i$ with the given $k$-object structure, then $\HK_e(R) = \sum a_i$.
\end{remark}
\subsection{The Han–Monsky ring in characteristic 2}\label{subsec: hmring_char_2}
The Han–Monsky ring $\Gamma$ can be filtered by subrings $\Lambda_1\subset \dots \subset \Lambda_e\subset ...$ whose arithmetic is easier to understand:
\begin{theoremdefinition}
    [Theorem 3.2, \cite{hanmonsky}]
    \label{lambdadef}
        The additive subgroup of $\Gamma$ generated by the $\lambda_i$ for $0\leq i \leq p^e-1$ is closed under multiplication. We call this ring $\Lambda_e$.
\end{theoremdefinition}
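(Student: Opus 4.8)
The plan is to reduce the closure of $\Lambda_e$ to a product rule for the $\lambda$-basis and then exploit a Frobenius factorization indexed by base-$p$ digits. Since multiplication is $\mathbb{Z}$-bilinear and the $\lambda_i$ form a basis (Corollary \ref{cor: delta_basis_is_a_basis} together with Definition \ref{def: lambda}), it suffices to show that $\lambda_i\lambda_j \in \Lambda_e$ whenever $0\le i,j\le p^e-1$. All products are ultimately governed by the elements $\delta_a\delta_b$, which record the Jordan type of the nilpotent operator $N = T\otimes 1 + 1\otimes T$ acting on $k[T]/(T^a)\otimes_k k[T]/(T^b)$ (Proposition \ref{pr: product_and_sum_in_rep_ring}). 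The one characteristic-$p$ input I would isolate first is the \say{freshman's dream} identity
\begin{equation*}
N^{p^s} = (T\otimes 1 + 1\otimes T)^{p^s} = T^{p^s}\otimes 1 + 1\otimes T^{p^s},
\end{equation*}
which forces $N^{p^s}=0$ as soon as $p^s\ge \max(a,b)$ and thereby caps the Jordan blocks; combined with the characteristic-free fact that the number of blocks equals $\min(a,b)$, this pins down $\delta_a\delta_b$ and exhibits the \emph{folding} of the classical Clebsch–Gordan rule that is special to characteristic $p$.

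The engine of the argument is a ring endomorphism $\Phi\colon\Gamma\to\Gamma$ induced by the Frobenius twist $T\mapsto T^p$ on $k$-objects, together with the identities $\Phi(\lambda_i)=\lambda_{pi}$ and, for $0\le j\le p-1$,
\begin{equation*}
\lambda_{pi+j} = \Phi(\lambda_i)\,\lambda_j .
\end{equation*}
Granting these, iterating on the base-$p$ expansion $i=\sum_s j_s p^s$ (with $0\le j_s\le p-1$) and distributing $\Phi$ across products yields the digit factorization $\lambda_i = \prod_s \Phi^s(\lambda_{j_s})$, where $\Phi^s(\lambda_{j_s})=\lambda_{j_s p^s}$. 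This is the exact analogue of a Steinberg tensor-product decomposition, and it is what converts the delicate index bookkeeping into a clean multiplicative statement.

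With these tools in place the conclusion is formal. First I would treat the base case $e=1$ by direct computation, showing that $\Lambda_1=\mathbb{Z}\lambda_0\oplus\cdots\oplus\mathbb{Z}\lambda_{p-1}$ is a subring; here the cleanest packaging is the reflection rule $\lambda_{p-1}\lambda_i=\lambda_{p-1-i}$ for $0\le i\le p-1$ together with the single-digit products, both read off from the $\delta_a\delta_b$ computed above. Then, for general $e$, writing $\lambda_i=\prod_s\Phi^s(\lambda_{i_s})$ and $\lambda_j=\prod_s\Phi^s(\lambda_{j_s})$ and using that $\Phi$ is a ring homomorphism gives $\lambda_i\lambda_j=\prod_s\Phi^s(\lambda_{i_s}\lambda_{j_s})$; since $\lambda_{i_s}\lambda_{j_s}\in\Lambda_1$ by the base case, each factor lies in $\Phi^s(\Lambda_1)=\mathbb{Z}\lambda_0\oplus\mathbb{Z}\lambda_{p^s}\oplus\cdots\oplus\mathbb{Z}\lambda_{(p-1)p^s}$, and the product is therefore a $\mathbb{Z}$-combination of elements $\prod_s\Phi^s(\lambda_{m_s})=\lambda_{\sum_s m_s p^s}$ with every $0\le m_s\le p-1$, i.e.\ of $\lambda_n$ with $0\le n\le p^e-1$. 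Equivalently, $\Lambda_e=\Lambda_1\cdot\Phi(\Lambda_1)\cdots\Phi^{e-1}(\Lambda_1)$ is a product of commuting subrings, hence a subring.

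I expect the main obstacle to be the construction of $\Phi$ and the verification of $\lambda_{pi+j}=\Phi(\lambda_i)\lambda_j$: this is precisely where the characteristic-$p$ truncation of Clebsch–Gordan must be controlled, and where one must check that the \say{carrying} inherent in base-$p$ addition never pushes an index up to or beyond $p^e$. Once the freshman's-dream identity is leveraged to make $\Phi$ well defined and multiplicative, the remaining steps are the bookkeeping recorded above.
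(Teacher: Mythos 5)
Your preliminary reductions are fine: closure of $\Lambda_e$ does reduce to showing $\lambda_i\lambda_j\in\Lambda_e$, the identity $\lambda_{pi+j}=\lambda_{pi}\lambda_j$ is Lemma \ref{lm: lambda_p_adic_split}\ref{lm: p_adic_split}, the digit factorization is Corollary \ref{cr: p_adic_expansion_rep_ring}, and the base case $\Lambda_1$ is Theorem \ref{lm: structure_Lambda_1}. The fatal gap is your central tool: the $\mathbb{Z}$-linear map determined by $\Phi(\lambda_i)=\lambda_{pi}$ exists, but it is \emph{not} a ring homomorphism for odd $p$, and no ring endomorphism of $\Gamma$ can send $\lambda_i\mapsto\lambda_{pi}$. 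Take $p=3$. Multiplicativity of $\Phi$ would force
\begin{equation*}
\lambda_3^2=\Phi(\lambda_1)^2=\Phi(\lambda_1^2)=\Phi(\lambda_0+\lambda_1+\lambda_2)=\lambda_0+\lambda_3+\lambda_6.
\end{equation*}
However, writing $\lambda_3=\delta_3-\delta_4$ and using $\delta_3^2=3\delta_3$ (Proposition \ref{pr: delta_p_squared}), $\delta_3\delta_4=\delta_3^2-\lambda_3\delta_3=3\delta_3-(\lambda_3-\lambda_4+\lambda_5)=2\delta_3+\delta_6$ (via Lemma \ref{lm: lambda_p_adic_split}\ref{lm: p_adic_split}), together with the directly computed Jordan type $\delta_4^2=\delta_7+\delta_5+\delta_3+\delta_1$ (for $N=x+y$ on $k[x,y]/(x^4,y^4)$ in characteristic $3$ one checks $\dim\ker N=4$, $N^6=2x^3y^3\neq 0$, and the ranks of $N^2,N^3,N^4,N^5$ are $9,6,4,2$, which forces the partition $(7,5,3,1)$), one gets
\begin{equation*}
\lambda_3^2=\delta_3^2-2\,\delta_3\delta_4+\delta_4^2=\delta_7-2\delta_6+\delta_5+\delta_1=\lambda_0+\lambda_5+\lambda_6\ \neq\ \lambda_0+\lambda_3+\lambda_6.
\end{equation*}
So $\Phi(\Lambda_1)=\mathbb{Z}\lambda_0\oplus\mathbb{Z}\lambda_3\oplus\mathbb{Z}\lambda_6$ is not even closed under multiplication: same-digit products spill into mixed-digit terms such as $\lambda_5=\lambda_2\lambda_3$. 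With this, the factorization $\lambda_i\lambda_j=\prod_s\Phi^s(\lambda_{i_s}\lambda_{j_s})$ and the description of $\Lambda_e$ as a product of commuting copies of $\Lambda_1$ both collapse. No module-level operation rescues $\Phi$ either: restriction along $T\mapsto T^p$ lowers indices, while base change along $T\mapsto T^p$ sends $\delta_a\mapsto\delta_{pa}$ but is not multiplicative, since $\delta_1\delta_1=\delta_1$ whereas $\delta_p^2=p\delta_p$. Your Steinberg-type picture is genuinely correct in characteristic $2$, where $\lambda_q^2=\lambda_0$ (Lemma \ref{cr: lambdaqsquared}) makes the digit positions multiply independently; that special case is presumably what suggested it, but it already fails at $p=3$.

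You should also be aware that the paper offers no proof of this statement: it is quoted as Theorem 3.2 of \cite{hanmonsky}. Indeed, the paper's remark following Corollary \ref{cr: p_adic_expansion_rep_ring} identifies exactly the issue your argument assumes away: the digit factorization reduces everything to the products $\lambda_{qi}\lambda_{qj}$ with $q$ a power of $p$, and only in characteristic $2$ is that remaining problem trivial. For odd $p$, controlling those same-position products (for instance $\lambda_3^2=\lambda_0+\lambda_5+\lambda_6$ above, in which the reflection rule of Lemma \ref{lm: lambda_p_adic_split}\ref{lm: p_adic_opposite} is visible through $\lambda_5=\lambda_3\lambda_8$) is the substantive content of Han--Monsky's Section 3, and any correct proof of the closure of $\Lambda_e$ must engage with it.
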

\begin{theorem}[Theorem 2.5, \cite{hanmonsky}]
    \label{lm: structure_Lambda_1}
    Let $i\leq j\le p-1$. Then $\lambda_i\lambda_j = \sum_{j-i}^{\min{i+j,2p-2-i-j}} \lambda_k$.
\end{theorem}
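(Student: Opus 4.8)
The plan is to reduce the statement to the multiplication of the $\delta$-basis and then run an induction that only requires multiplication by $\lambda_1$. Unwinding Definition~\ref{def: lambda} gives
\[
\lambda_i\lambda_j=(-1)^{i+j}\bigl(\delta_{i+1}-\delta_i\bigr)\bigl(\delta_{j+1}-\delta_j\bigr)=(-1)^{i+j}\bigl(\delta_{i+1}\delta_{j+1}-\delta_{i+1}\delta_j-\delta_i\delta_{j+1}+\delta_i\delta_j\bigr),
\]
so everything comes down to the products $\delta_a\delta_b$ with $a,b\le p$. By Proposition~\ref{pr: product_and_sum_in_rep_ring}, $\delta_a\delta_b$ is the class of the $k$-object $k[u,v]/(u^a,v^b)$ on which $T$ acts as multiplication by $u+v$, and decomposing it over the $\delta$-basis (Corollary~\ref{cor: delta_basis_is_a_basis}) is exactly reading off the Jordan type of multiplication by $u+v$ on this Artinian complete intersection.

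First I would treat the ``no-carry'' range $a+b\le p+1$. There the relevant binomial coefficients $\binom{a+b-2}{a-1}$ are units modulo $p$, so multiplication by the general linear form $u+v$ behaves as in characteristic zero (the strong Lefschetz property holds), and the Jordan type is the conjugate of the symmetric unimodal Hilbert function of $k[u,v]/(u^a,v^b)$. This yields the Clebsch--Gordan decomposition $\delta_a\delta_b=\sum_{t=0}^{a-1}\delta_{\,|a-b|+1+2t}$ for $a+b\le p+1$. Substituting this into the displayed identity and collecting terms in the $\delta$-basis gives the interior relation $\lambda_1\lambda_j=\lambda_{j-1}+\lambda_j+\lambda_{j+1}$ for $1\le j\le p-2$.

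The heart of the matter is the top of the range, where carries occur. The key computation is $\delta_2\delta_p$: since $\operatorname{char}k=p$ one has $(u+v)^p=u^p+v^p=0$ on $k[u,v]/(u^2,v^p)$, so no Jordan block can exceed size $p$, and a short analysis of $\ker(u+v)$ shows that there are two blocks, both of size $p$, i.e. $\delta_2\delta_p=2\delta_p$ rather than the naive $\delta_{p-1}+\delta_{p+1}$. Feeding this into the displayed identity produces the boundary relation $\lambda_1\lambda_{p-1}=\lambda_{p-2}$. With the interior and boundary relations in hand, and recalling that $\lambda_0=\delta_1$ is the unit of $\Gamma$, I would prove the theorem by induction on $i$: solving $\lambda_1\lambda_i=\lambda_{i-1}+\lambda_i+\lambda_{i+1}$ for $\lambda_{i+1}$ gives
\[
\lambda_{i+1}\lambda_j=\lambda_1\bigl(\lambda_i\lambda_j\bigr)-\lambda_i\lambda_j-\lambda_{i-1}\lambda_j,
\]
and one applies the induction hypothesis to $\lambda_i\lambda_j$ and $\lambda_{i-1}\lambda_j$ and expands $\lambda_1\lambda_k$ term by term. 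In the interior the middle summands cancel and the rest telescopes to $\sum_{k=j-i-1}^{\,i+j+1}\lambda_k$, which is the desired range for $i+1$.

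I expect the main obstacle to be the truncation at $2p-2-i-j$, that is, the behaviour of the induction once $i+j$ passes $p-1$. The reflection is forced entirely by the boundary relation $\lambda_1\lambda_{p-1}=\lambda_{p-2}$, equivalently by the Frobenius identity $(u+v)^p=u^p+v^p$: whenever the expansion of $\lambda_1\cdot(\lambda_i\lambda_j)$ reaches the index $k=p-1$, the expected top term is absent and the sum folds back about $p-1$, which is precisely what converts the upper limit $i+j$ into $\min\{i+j,\,2p-2-i-j\}$. Carrying this bookkeeping cleanly through the induction, and checking the sign conventions in the displayed reduction for both parities of $p$, is the only delicate point; the underlying ring arithmetic is routine. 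An alternative, more computational route would be to establish the full characteristic-$p$ Jordan type of $\delta_a\delta_b$ for all $a,b\le p$ directly, using Lucas' theorem on the ranks of $(u+v)^n$, and then telescope once; this bypasses the induction but forces the carry analysis at every step rather than only at the boundary.
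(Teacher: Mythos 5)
The paper itself offers no proof of this statement --- it is imported verbatim as Theorem 2.5 of \cite{hanmonsky} --- so the only question is whether your argument stands on its own, and after checking the details I believe it does. Both of your external inputs are correct: for $a+b\le p+1$ every binomial coefficient involved has the form $\binom{m}{l}$ with $m\le p-1$, hence is a unit mod $p$, so the Jordan type of $u+v$ on $k[u,v]/(u^a,v^b)$ is indeed the characteristic-zero Clebsch--Gordan one; and $\delta_2\delta_p=2\delta_p$ holds because $(u+v)^p=u^p+v^p=0$ rules out a block of size $p+1$ while $\dim\ker(u+v)=2$ forces exactly two blocks of size $p$. I also verified that your recursion $\lambda_{i+1}\lambda_j=\lambda_1(\lambda_i\lambda_j)-\lambda_i\lambda_j-\lambda_{i-1}\lambda_j$ (legitimate, since $i+1\le j\le p-1$ forces $1\le i\le p-2$) closes up in all three regimes: for $i+j+1\le p-1$ the telescoping gives $\sum_{k=j-i-1}^{i+j+1}\lambda_k$; at the transition $i+j=p-1$ the boundary relation $\lambda_1\lambda_{p-1}=\lambda_{p-2}$ replaces the would-be contribution $\lambda_{p-2}+\lambda_{p-1}+\lambda_p$ by $\lambda_{p-2}$ and caps the sum at $p-2=2p-2-(i+1)-j$; and for $i+j\ge p$ the boundary relation is never invoked again, because there the truncation propagates by itself: the truncated upper limit of $\lambda_{i-1}\lambda_j$, namely $2p-1-i-j$, exceeds that of $\lambda_i\lambda_j$ by one, so subtracting it shaves exactly the top of the expanded sum.

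Two comments, neither of which is a gap. First, you invoke more than you use: the induction only ever consumes the products $\delta_2\delta_b$ with $b\le p$, and these are elementary --- $(u+v)^b\cdot 1=b\,uv^{b-1}\neq 0$ for $b<p$ gives a block of size $b+1$, and a two-line kernel computation shows there are exactly two blocks, whence $\delta_2\delta_b=\delta_{b-1}+\delta_{b+1}$ --- so the appeal to the strong Lefschetz property in the whole no-carry range can be dropped. Second, your description of the folding (``whenever the expansion reaches $k=p-1$ the top term is absent'') is literally what happens only at the transition step $i+j=p-1$; in the regime $i+j\ge p$ the expanded sum never reaches $p-1$ at all, and the truncation is maintained by the offset of upper limits described above rather than by the boundary relation. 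That deep-truncation case is the one to write out explicitly when you flesh out the bookkeeping; once it is, the proof is complete.
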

\begin{table}[h]
    \label{tab: multiplication_table_lambda_1_char_2_and_char_3}
\centering
\begin{tabular}{c|cc}
    $\cdot$ & $\lambda_0$ & $\lambda_1$ \\ \hline
    $\lambda_0$ & $\lambda_0$ & $\lambda_1$ \\
    $\lambda_1$ & $\cdot$ & $\lambda_0$
\end{tabular}\hspace{2em}
\begin{tabular}{c|ccc}
$\cdot$ & $\lambda_0$ & $\lambda_1$ & $\lambda_2$ \\ \hline
$\lambda_0$ & $\lambda_0$ & $\lambda_1$ & $\lambda_2$ \\
$\lambda_1$ & $\cdot$ & $\lambda_0+\lambda_1+\lambda_2$ & $\lambda_1$ \\
$\lambda_2$ & $\cdot$ & $\cdot$ & $\lambda_0$
\end{tabular}
\caption{Multiplication table for the $\lambda$-basis of $\Lambda_1$ in characteristic $2$ and $3$, respectively.}
\end{table}

\begin{proposition}
\label{pr: delta_p_squared}
$\delta_p^2 = p\delta_p$.
\end{proposition}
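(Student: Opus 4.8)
The plan is to realize $\delta_p^2$ as an explicit $k$-object and decompose it directly (here $\operatorname{char} k = p$). Model $\delta_p = k[T]/(T^p)$ as the pair $(k[x]/(x^p),\, x)$, i.e.\ the ring $k[x]/(x^p)$ with $T$-action given by multiplication by $x$, in the sense of Notation~\ref{not: notation_k_objects_as_pairs}. By the tensor-product structure of Proposition~\ref{pr: product_and_sum_in_rep_ring}, the product $\delta_p^2 = \delta_p \otimes_k \delta_p$ is the $k$-object
\[
\left(\frac{k[x,y]}{(x^p,y^p)},\; x+y\right),
\]
because the rule $T\cdot\bigl(f(x)\otimes g(y)\bigr) = xf\otimes g + f\otimes yg$ is exactly multiplication by $x+y$ on $k[x,y]/(x^p,y^p)$. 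It therefore suffices to decompose this single $k$-object over the $\delta$-basis of Corollary~\ref{cor: delta_basis_is_a_basis}.

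The key observation is the freshman's dream in characteristic $p$: setting $s := x+y$, we get $s^p = (x+y)^p = x^p + y^p = 0$ in $k[x,y]/(x^p,y^p)$. I would exploit this through a change of variables. The $k$-algebra surjection $k[X,S] \twoheadrightarrow k[x,y]/(x^p,y^p)$ sending $X\mapsto x$ and $S\mapsto s$ kills both $X^p$ and $S^p$, hence factors through $k[X,S]/(X^p,S^p)$; since both algebras have $k$-dimension $p^2$, the induced map
\[
\frac{k[x,y]}{(x^p,y^p)} \;\cong\; \frac{k[X,S]}{(X^p,S^p)}
\]
is an isomorphism. Under it the $T$-action, multiplication by $x+y = s$, becomes multiplication by $S$.

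It then remains to read off the module structure. Over the subring $k[S]/(S^p) = k[T]/(T^p) = \delta_p$, the ring $k[X,S]/(X^p,S^p)$ is free with basis $1, X, \dots, X^{p-1}$, so as a $k[T]$-module it is isomorphic to $\delta_p^{\oplus p}$. Hence $\delta_p^2 = p\,\delta_p$ in $\Gamma$, as claimed.

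There is no serious obstacle here: the only characteristic-$p$ input is the freshman's dream, and the two points to verify with care are that the substitution $X\mapsto x$, $S\mapsto x+y$ genuinely defines an algebra isomorphism (a dimension count against the fixed value $p^2$ settles this) and that $s^p=0$ confines the decomposition to the $\delta_i$ with $i\le p$, consistent with the dimension identity $p\cdot p = p^2$. The whole argument thus reduces to a change of variables together with the freeness of a polynomial ring over the subring generated by one of its variables.
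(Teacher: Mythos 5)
Your proof is correct. The paper itself gives no argument here: it simply cites the first part of Lemma 3.3 in \cite{hanmonsky}, whose general statement is $\delta_q\delta_i = i\,\delta_q$ for $i\leq q=p^e$. What you have done is reconstruct, in the special case $i=q=p$, essentially the argument that Han and Monsky use for that lemma: realize the product as $\bigl(k[x,y]/(x^p,y^p),\,x+y\bigr)$, invoke the freshman's dream to see that $s=x+y$ satisfies $s^p=0$, perform the change of variables $(x,y)\mapsto(x,s)$ (your dimension count correctly justifies that this is an isomorphism), and conclude by freeness of rank $p$ over $k[S]/(S^p)$. Every step checks out, including the identification of the tensor-product $T$-action with multiplication by $x+y$, which is consistent with Proposition~\ref{pr: product_and_sum_in_rep_ring} and Example~\ref{ex: example_diagonal_hypersurface_k_object}. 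What your route buys is self-containedness: the paper outsources the statement to the literature, while your argument makes the proposition independent of \cite{hanmonsky} using only elementary characteristic-$p$ algebra; moreover, your method generalizes with no extra work to the full statement $\delta_q\delta_i = i\,\delta_q$ for $i\leq q$ a power of $p$ (replace $(X^p,S^p)$ by $(S^q, Y^i)$ and note $s^q = x^q + y^q = 0$), which is the form actually cited.
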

\begin{proof}
First part of Lemma 3.3 in \cite{hanmonsky}.
\end{proof}

    \begin{lemma}[Lemma 3.4, \cite{hanmonsky}]
        \label{lm: lambda_p_adic_split}
        Let $q = p^n$, $n\geq 1$. Take $0\leq i \leq q-1.$ Then, in $\Gamma$,
        \begin{enumerate}[label=(\alph*)]
        \item $\lambda_i\lambda_{qj} =\lambda_{qj+i}$; \label{lm: p_adic_split}
        \item $\lambda_i\lambda_{qj-1} = \lambda_{qj-i-1}$. \label{lm: p_adic_opposite}
        \end{enumerate}
    \end{lemma}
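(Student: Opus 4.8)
The plan is to induct on $n$, peeling one base-$p$ digit at a time; both identities are \emph{carry-free} statements, so the whole point is to show that multiplication by a low factor $\lambda_i$ with $0\le i\le q-1$ disturbs only the bottom $n$ base-$p$ digits of the index. Write $q=pq'$ with $q'=p^{n-1}$ and split $i=a+q'b$ with $0\le a\le q'-1$ and $0\le b\le p-1$. The inductive hypothesis for $q'$ lets me factor $\lambda_i=\lambda_a\lambda_{q'b}$ (this is \ref{lm: p_adic_split} at level $n-1$) and rewrite $\lambda_{qj}=\lambda_{q'(pj)}$ and $\lambda_{qj-1}=\lambda_{q'-1}\lambda_{q'(pj-1)}$. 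A short computation then reduces part \ref{lm: p_adic_split} to the single product $\lambda_{q'b}\lambda_{q'(pj)}=\lambda_{q'(pj+b)}$, and reduces part \ref{lm: p_adic_opposite} to $\lambda_{q'b}\lambda_{q'(pj-1)}=\lambda_{q'(pj-b-1)}$ together with the reflection $\lambda_a\lambda_{q'-1}=\lambda_{q'-a-1}$ (which is \ref{lm: p_adic_opposite} at level $n-1$ with $j=1$) and one last application of \ref{lm: p_adic_split}. Thus everything collapses to two families of inputs: the case $n=1$, and the products of two indices that are both divisible by $q'$.

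To supply these inputs I would make the Frobenius structure of $\Gamma$ explicit. Sending a $k$-object $(M,T)$ to $(M,T^p)$ is additive, and because $(T\otimes 1+1\otimes T)^p=T^p\otimes 1+1\otimes T^p$ in characteristic $p$ it is also multiplicative, so it descends to a ring endomorphism $\rho\colon\Gamma\to\Gamma$. By the decomposition of $(k[x]/(x^a),x^c)$ recorded after Corollary~\ref{cor: delta_basis_is_a_basis} (take $a=n$, $c=p$, $n=pm+r$),
\begin{gather*}
\rho(\delta_n)=(p-r)\,\delta_m+r\,\delta_{m+1},\qquad\text{hence}\qquad\rho(\lambda_{pm+r})=\pm\,\lambda_m ,
\end{gather*}
so $\rho$ erases the bottom base-$p$ digit, and $\rho^{\,n-1}$ sends each $q'$-divisible $\lambda_{q'm}$ to $\lambda_m$. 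The bottom-digit rules that the induction feeds on are exactly the level-$1$ content of Theorem~\ref{lm: structure_Lambda_1}: its boundary case gives the reflection $\lambda_i\lambda_{p-1}=\lambda_{p-1-i}$ for $0\le i\le p-1$, while $\lambda_0=1$ is the identity. I expect the cleanest packaging to be a generating-function identity $\sum_{i\ge 0}\lambda_i x^i=\bigl(\sum_{r=0}^{p-1}\lambda_r x^r\bigr)\,\Phi(x^p)$ reflecting this Frobenius splitting, from which the two special coefficient patterns in \ref{lm: p_adic_split} and \ref{lm: p_adic_opposite} can be read off.

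The genuinely hard step, and the one I expect to be the main obstacle, is the case $n=1$ for arbitrary $j$: when $j\ge 1$ the product $\lambda_i\lambda_{pj}$ leaves $\Lambda_1$, so Theorem~\ref{lm: structure_Lambda_1} alone does not apply and one must prove that no carry escapes the units digit, i.e.\ that the higher digits of $pj$ (respectively $pj-1$) pass through untouched. The difficulty is that $\rho$ is far from injective — every $\lambda_s$ has $p$ preimages, and the companion Verschiebung $V$ with $V(\delta_m)=\delta_{pm}$ only satisfies $\rho V=p$ — so one cannot simply transport the $\Lambda_1$ identity upward. I would resolve this by working inside the subring generated by the $q'$-divisible $\lambda$'s, using the closedness in Theorem-Definition~\ref{lambdadef} to show that this subring is carried isomorphically onto $\Gamma$ by $\rho^{\,n-1}$; the filtration $\Lambda_e$ then rules out precisely the stray higher-digit terms that $\rho^{\,n-1}$ cannot detect. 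Once this level-compatibility is in place, the digit bookkeeping of the first paragraph closes the induction and yields both \ref{lm: p_adic_split} and \ref{lm: p_adic_opposite} for all $n$.
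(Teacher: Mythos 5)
Your proposal does not close, and the two gaps sit exactly where all the content of the lemma lives. First, the base case: your induction and Frobenius bookkeeping reduce everything to the level-one identities $\lambda_i\lambda_{pj}=\lambda_{pj+i}$ and $\lambda_i\lambda_{pj-1}=\lambda_{pj-i-1}$ for arbitrary $j$, which you correctly flag as the main obstacle --- but you never prove them. Your resolution paragraph addresses the \emph{other} problem (transporting level-one identities upward), not this one, and the proposed generating-function identity $\sum_i\lambda_ix^i=\bigl(\sum_{r<p}\lambda_rx^r\bigr)\Phi(x^p)$ is a restatement of part (a), not a tool for proving it. What is actually needed is a computation with tensor products of Jordan blocks in characteristic $p$: writing $\delta_a\delta_b$ as $k[x,y]/(x^a,y^b)$ with $T$-action $x+y$, substituting $u=x+y$, $v=x$, and using $(u-v)^q=u^q-v^q$, one finds for $1\le a\le q$ that $\delta_a\delta_{qj}=a\,\delta_{qj}$ and $\delta_a\delta_{qj+1}=\delta_{qj+a}+(a-1)\delta_{qj}$ (the cyclic submodule generated by $1$ has maximal $T$-order $qj+a$, hence splits off, and the quotient is free over $k[T]/(T^{qj})$); expanding $\lambda_i\lambda_{qj}=(-1)^{i+qj}(\delta_{i+1}-\delta_i)(\delta_{qj+1}-\delta_{qj})$ then yields part (a) for all $n$ at once, with no induction at all, and part (b) follows from the analogous formula $\delta_a\delta_{qj-1}=(a-1)\delta_{qj}+\delta_{qj-a}$. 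Your proposal never engages with the module structure at this level, and without it nothing forces the ``no carry'' behavior you need.

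Second, the transport mechanism is unsupported and in fact circular. You need the $\mathbb{Z}$-span of $\{\lambda_{q'm}\}_{m\ge 0}$ to be closed under multiplication, so that injectivity of $\rho^{\,n-1}$ on that span pins down the product; you attribute this closedness to Theorem-Definition \ref{lambdadef}, but that result concerns the span of $\lambda_i$ for $0\le i\le p^e-1$ --- the \emph{low} indices, a completely different subgroup --- and says nothing about the span of the $q'$-divisible indices. The closure you want is essentially Theorem 3.6 of \cite{hanmonsky} (the magnification map $\lambda_i\mapsto\lambda_{qi}$ being a ring isomorphism onto its image), which in Han--Monsky's development is proved \emph{after}, and by means of, the very Lemma 3.4 you are trying to establish. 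To be clear about what is right: the Frobenius observation --- that $(M,T)\mapsto(M,T^p)$ descends to a ring endomorphism $\rho$ of $\Gamma$ with $\rho(\lambda_{pm+r})=\pm\lambda_m$ --- is correct and pleasant, and your inductive digit-peeling scaffolding is sound; but both pillars it rests on (the level-one case and the closure of the $q'$-divisible span) are missing. Note also that the paper itself offers no proof to compare against: it imports the statement verbatim from \cite{hanmonsky}, so your argument has to stand entirely on its own, and as written it does not.
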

    \begin{corollary}
    \label{cr: p_adic_expansion_rep_ring}
    Let $0\leq a< p^{n+1},$ and $a = a_0+a_1p+a_2p^2+\dots+a_np^n$ be its expansion in base $p$, $0\leq a_i<p$. Then, we have the following decomposition:
    \begin{gather*}
        \lambda_a = \prod_{k\geq 0}^n\lambda_{a_kp^k}.
    \end{gather*}
    \end{corollary}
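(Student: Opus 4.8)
The plan is to prove this by induction on $n$, repeatedly applying the first identity of Lemma \ref{lm: lambda_p_adic_split}, namely $\lambda_i\lambda_{qj}=\lambda_{qj+i}$ for $0\le i\le q-1$, with the modulus $q=p^n$. The guiding idea is to peel off the \emph{top} base-$p$ digit of $a$ at each step, so that the remaining index lands in exactly the form to which the inductive hypothesis applies.

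For the base case $n=0$ we have $0\le a<p$, so $a=a_0$ and the claimed product is the single factor $\lambda_{a_0}=\lambda_a$, which is immediate.

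For the inductive step, I would assume the statement for $n-1$ and take $0\le a<p^{n+1}$ with expansion $a=a_0+a_1p+\dots+a_np^n$. Writing $a=a'+a_np^n$ with $a'=a_0+a_1p+\dots+a_{n-1}p^{n-1}$, the key observation is that $0\le a'\le p^n-1$, so Lemma \ref{lm: lambda_p_adic_split}\ref{lm: p_adic_split} applies with $q=p^n$, $i=a'$, and $j=a_n$, giving
\begin{gather*}
\lambda_{a'}\,\lambda_{a_np^n}=\lambda_{a_np^n+a'}=\lambda_a.
\end{gather*}
Since $a'<p^n$ has base-$p$ digits $a_0,\dots,a_{n-1}$, the inductive hypothesis yields $\lambda_{a'}=\prod_{k=0}^{n-1}\lambda_{a_kp^k}$, and substituting gives $\lambda_a=\left(\prod_{k=0}^{n-1}\lambda_{a_kp^k}\right)\lambda_{a_np^n}=\prod_{k=0}^{n}\lambda_{a_kp^k}$, as claimed.

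The argument is essentially mechanical once the right splitting is fixed, so I do not expect a genuine obstacle; the only point requiring care is the choice to remove the most significant digit rather than the least significant one. Peeling off the bottom digit would instead express $\lambda_a$ as $\lambda_{a_0}\lambda_{pa'}$, leaving a factor $\lambda_{pa'}$ whose index is not directly of the form handled by the inductive hypothesis, and would force an awkward rescaling step. Removing the top digit keeps the residue $a'$ strictly below $p^n$, which is precisely the range in which both Lemma \ref{lm: lambda_p_adic_split}\ref{lm: p_adic_split} and the inductive hypothesis apply without modification.
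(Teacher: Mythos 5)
Your proof is correct and follows essentially the same route as the paper: induction on $n$, splitting off the most significant digit as $a = a' + a_np^n$ with $a' < p^n$, and applying Lemma \ref{lm: lambda_p_adic_split}\ref{lm: p_adic_split} with $q = p^n$, $i = a'$, $j = a_n$ before invoking the inductive hypothesis on $\lambda_{a'}$. Your remark about why peeling off the top digit (rather than the bottom one) is the right choice is a nice clarification that the paper leaves implicit.
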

    \begin{proof}
        We proceed by induction on $n$. For $n=0$, there is nothing to do. Now, observe that
        \begin{gather*}
            a_0 + pa_1 +\dots + p^na_n = (a_0 + pa_1+\dots+p^{n-1}a_{n-1})+p^na_n,
        \end{gather*}
    where $a_0+pa_1+\dots+p^{n-1}a_{n-1}<p^n.$ Thus, the lemma above gives
    \begin{gather*}
        \lambda_{a_0+pa_1+\dots+p^{n-1}a_{n-1}+p^na_n} = \lambda_{a_0+pa_1+\dots+p^{n-1}a_{n-1}}\lambda_{p^na_n}.
    \end{gather*}
    \end{proof}
This result implies that, to understand the multiplication table of $\Gamma$, it is enough to understand the multiplication table of the elements $\lbrace \lambda_{qi}\rbrace_{0\leq i\leq q-1}$ for each $q = p^e$. This becomes especially simple in characteristic 2:
\begin{lemma}[Theorem 3.7, \cite{hanmonsky}]
    \label{cr: lambdaqsquared}
        Given $q=2^n,$ then $\lambda_{q}^2 = \lambda_0$.
\end{lemma}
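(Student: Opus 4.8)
The plan is to sidestep the self-product $\lambda_q\cdot\lambda_q$ entirely: this ``diagonal'' product is \emph{not} reachable from Lemma~\ref{lm: lambda_p_adic_split}, since both parts of that lemma require one of the two factors to have index strictly below the prime power, whereas here both factors carry the index $q$. Instead I would extract it from the squares of the ``all-ones'' elements $\lambda_{2^s-1}$, which \emph{are} directly computable, and combine them through the $2$-adic factorization of Corollary~\ref{cr: p_adic_expansion_rep_ring}.

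First I would record the basic observation that for every $s\ge 1$ one has $\lambda_{2^s-1}^2=\lambda_0$. This is immediate from part~\ref{lm: p_adic_opposite} of Lemma~\ref{lm: lambda_p_adic_split}: applying it with prime power $q=2^s$, taking $j=1$ and $i=q-1$ (which satisfies $0\le i\le q-1$), the right-hand index is $qj-i-1=q-(q-1)-1=0$, so
\begin{equation*}
\lambda_{2^s-1}^2=\lambda_{q-1}\lambda_{q-1}=\lambda_0 .
\end{equation*}
Next I would feed this into Corollary~\ref{cr: p_adic_expansion_rep_ring}. Since $2^{n+1}-1=\sum_{k=0}^{n}2^k$ has all binary digits equal to $1$, the corollary gives
\begin{equation*}
\lambda_{2^{n+1}-1}=\prod_{k=0}^{n}\lambda_{2^k}=\lambda_{2^n-1}\,\lambda_{2^n},
\end{equation*}
where the last equality again uses the corollary to recognize $\prod_{k=0}^{n-1}\lambda_{2^k}=\lambda_{2^n-1}$. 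Squaring this identity and using that $\Gamma$ is commutative yields $\lambda_{2^{n+1}-1}^2=\lambda_{2^n-1}^2\,\lambda_{2^n}^2$. Substituting the instances $s=n+1$ and $s=n$ of the observation above (both valid since the relevant powers $2^{n+1}$ and $2^n$ satisfy the hypothesis of Lemma~\ref{lm: lambda_p_adic_split} when $n\ge 1$) turns this into $\lambda_0=\lambda_0\cdot\lambda_{2^n}^2$, and since $\lambda_0$ is the unit of $\Gamma$ we conclude $\lambda_{2^n}^2=\lambda_0$. The remaining case $n=0$ is nothing but the instance $s=1$ of the observation, as $2^0=2^1-1$.

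The only genuine subtlety — the point I would flag as the main obstacle — is purely conceptual: recognizing that $\lambda_q^2$ falls outside the range of Lemma~\ref{lm: lambda_p_adic_split} and must therefore be traded for the accessible products $\lambda_{2^s-1}^2$ via multiplicativity of the $2$-adic expansion. Once that reduction is in place the computation is routine; I would only take care to verify that the prime-power hypothesis $n\ge 1$ of Lemma~\ref{lm: lambda_p_adic_split} is met at each invocation and that the base case $n=0$ is dispatched as above.
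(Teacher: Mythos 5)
Your proof is correct, but it takes a genuinely different route from the paper, which offers no argument at all for this lemma: there it is imported as a black box, cited as Theorem 3.7 of \cite{hanmonsky} on the same footing as Lemma \ref{lm: lambda_p_adic_split}. You instead derive the identity formally from the paper's other ingredients, and your derivation is sound and non-circular. Indeed, Lemma \ref{lm: lambda_p_adic_split}\ref{lm: p_adic_opposite} with $i=q-1$ and $j=1$ gives $\lambda_{q-1}^2=\lambda_0$ for every $q=2^s$, $s\geq 1$; Corollary \ref{cr: p_adic_expansion_rep_ring} (which the paper proves using only Lemma \ref{lm: lambda_p_adic_split}, so no circularity arises) gives $\lambda_{2^{n+1}-1}=\lambda_{2^n-1}\lambda_{2^n}$; and squaring, using commutativity and the fact that $\lambda_0=\delta_1$ is the unit of $\Gamma$, yields $\lambda_0=\lambda_0\cdot\lambda_{2^n}^2$, hence $\lambda_{2^n}^2=\lambda_0$, with the case $n=0$ correctly dispatched via $2^0=2^1-1$ (and consistent with the characteristic-$2$ multiplication table for $\Lambda_1$). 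Your opening diagnosis is also accurate: the diagonal product $\lambda_q\cdot\lambda_q$ is not directly reachable from Lemma \ref{lm: lambda_p_adic_split}, since both parts of that lemma force one index to lie strictly below the prime power while the other has the form $qj$ or $qj-1$. One point you should make explicit if this argument were to replace the citation: every invocation of Lemma \ref{lm: lambda_p_adic_split} with $q=2^s$ presupposes that the characteristic is $2$, so what you prove is precisely the characteristic-$2$ statement --- which is exactly the setting in which the paper uses the lemma, namely the $\sigma$-basis computations of Section \ref{sec:HKmultiplicity_A1_and_A2_char_2_3}. What your route buys is self-containedness: Theorem 3.7 of \cite{hanmonsky} could be dropped from the list of imported results, with everything in Section \ref{sec:han_monsky_ring} resting on Lemma 3.4 of \cite{hanmonsky} alone; what the paper's citation buys is brevity and deference to the original source.
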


It is useful to allow rational coefficients in the Han–Monsky ring:
\begin{definition}[Definition 4.8, \cite{hanmonsky}]
\label{df: representation_ring_rational_coefficients}
    Let $a\geq 0$ be a real number. In $\mathbb{Q}\otimes_\mathbb{Z} \Gamma$, set 
    \begin{gather*}
        \delta_a := (1-z)\delta_r + z\delta_{r+1}, \text{ where } a=r+z, r\in \mathbb{Z}_{\geq 0}, z\in [0,1).
    \end{gather*}
    We extend $\alpha$ to $\mathbb{Q}\otimes_\mathbb{Z} \Gamma$ by $\mathbb{Q}$-linearity, and denote $D(a_i,\dots,a_i) := \alpha(\prod \delta_{a_i})$. Note that we are assuming $\delta_0 = 0$.
    \end{definition}

    \begin{remark}
        In the same way, we also allow rational coefficients in the subrings $\Lambda_e$ defining $\Lambda_e\otimes_\mathbb{Z} \mathbb{Q}$.
    \end{remark}
    
    We introduce a new basis that happens to be very useful for computations, and remarkably simple in characteristic 2. This basis is a set of orthogonal idempotent elements, that generate $\Lambda_e\otimes_\mathbb{Z} \mathbb{Q}$. It will be the main tool to compute the Hilbert–Kunz functions of the $A_1$ and $A_2$ singularities in characteristic 2, later in Section \ref{sec:HKmultiplicity_A1_and_A2_char_2_3}.
\begin{definition}[$\sigma$-basis]
Let $e \geq 1$ and $0 \leq k \leq 2^e - 1$. Write $k$ in binary as $k = k_0 + 2k_1 + \dots + 2^{e-1}k_{e-1}$, where $k_i \in \{0, 1\}$. We define the $\sigma$-basis as:
\begin{equation*}
\sigma_{k,e} := \frac{1}{2^e} \prod_{j=0}^{e-1} (\lambda_0 + (-1)^{k_j} \lambda_{2^j}) \in \Lambda_e\otimes_\mathbb{Z} \mathbb{Q}
\end{equation*}
where $\lambda_i$ are the elements of the $\lambda$-basis.
\end{definition}
\begin{remark}
    Soon after the first version of this preprint was uploaded, Nick Cox-Steib let me know that he had been working on this basis in the case of characteristic $\neq 2$. It is worth mentioning that their coordinates over the $\lambda$-basis seem to involve irrational coefficients.
\end{remark}
\begin{proposition}
\label{pr: sigmaproperties}
Let $e\geq 1$. The $\sigma$-elements have the following properties:
    \begin{enumerate}[label = (\alph*)]
        \item (Idempotency and orthogonality) $\sigma_{k,e}^2=\sigma_{k,e}$ and $\sigma_{k,e}\sigma_{l,e} = 0$ for $k\neq l$. \label{pr: idem_orth}
        \item If $\eta = \sum_{k=0}^{2^e-1} a_k\sigma_{k,e}$, then $\sigma_{k,e}\eta = a_k\sigma_{k,e}$. \label{pr: extracting_coordinates_orthogonality}
        \item (Interaction between $\sigma$-bases) Let $e\leq e'$,
        \begin{gather*}
            \sigma_{l,e'}\sigma_{k,e} = \begin{cases}
                \sigma_{l,e'} & \text{ if } l=k \text{ or } |l-k| \geq 2^{e},\\
                0 & \text{otherwise.}
                \end{cases}
        \end{gather*}
        In particular, if $l\leq 2^e$, then $\sigma_{l,e'}\sigma_{k,e} = \sigma_{l,e'}$ if $l = k$, and 0 otherwise. \label{pr: inter_orthogonality}
        \item (Recursive split) $\sigma_{k,e} = \sigma_{k,e+1}+\sigma_{2^e+k,e+1}$. \label{pr: recursive_split}
        \item Let $n = n_0+2n_1+\dots+2^{e-1}n_{e-1}$, $n_i = 0,1$. Then, $\sigma_{k,e} = \frac{1}{2^e}\sum_{i=0}^{2^e-1} (-1)^{\sum i_jk_j} \lambda_{i}$. \label{pr: sigma_in_terms_of_lambda}
        \item $\sigma_{1,e} = \frac{1}{2^e}\delta_{2^e} = \frac{1}{2^e}\left(\sum_{j=0}^{2^e-1} (-1)^j\lambda_{j}\right)$. \label{pr: sigma_1_delta_and_lambda}
        \item $\alpha(\sigma_{k,e}) = \frac{1}{2^e}$ for every $k$. \label{pr: alpha_of_sigma}
    \end{enumerate}
\end{proposition}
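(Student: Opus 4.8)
The plan is to first pin down the algebraic structure of $\Lambda_e\otimes_{\mathbb Z}\mathbb Q$ in characteristic $2$, after which all seven properties become short computations. The key structural observation is that the elements $g_j:=\lambda_{2^j}$, $0\le j\le e-1$, are \emph{commuting involutions}: they commute since $\Gamma$ is commutative, and $g_j^2=\lambda_{2^j}^2=\lambda_0$ by Lemma \ref{cr: lambdaqsquared}, where $\lambda_0=\delta_1$ is the multiplicative unit. Moreover, Corollary \ref{cr: p_adic_expansion_rep_ring} with $p=2$ gives $\lambda_k=\prod_{j:\,k_j=1}\lambda_{2^j}=\prod_{j:\,k_j=1}g_j$, since $\lambda_{2^j\cdot 0}=\lambda_0$. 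Thus $\Lambda_e\otimes_{\mathbb Z}\mathbb Q$ is spanned by the $2^e$ monomials in the commuting involutions $g_0,\dots,g_{e-1}$, and the $\sigma_{k,e}=\tfrac1{2^e}\prod_j(\lambda_0+(-1)^{k_j}g_j)$ are exactly its standard character idempotents. Once this is recorded, I would prove the parts in a dependency-respecting order rather than the listed order.

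For \ref{pr: idem_orth} I would compute factorwise using $(\lambda_0\pm g_j)^2=\lambda_0\pm 2g_j+g_j^2=2(\lambda_0\pm g_j)$ and $(\lambda_0+g_j)(\lambda_0-g_j)=\lambda_0-g_j^2=0$: each of the $e$ squared factors in $\sigma_{k,e}^2$ contributes a factor $2$, so the $2^{-2e}$ becomes $2^{-e}$ and $\sigma_{k,e}^2=\sigma_{k,e}$; if $k\ne l$ they differ in some bit $m$, whence the index-$m$ factors have opposite signs and annihilate, giving $\sigma_{k,e}\sigma_{l,e}=0$. Next I would establish the expansion \ref{pr: sigma_in_terms_of_lambda} by multiplying the product out: a choice of $\lambda_0$ or $(-1)^{k_j}g_j$ from each factor corresponds to a subset $S\subseteq\{0,\dots,e-1\}$, and via $\prod_{j\in S}g_j=\lambda_i$ with $i=\sum_{j\in S}2^j$ (Corollary \ref{cr: p_adic_expansion_rep_ring}) and $\sum_{j\in S}k_j=\sum_j i_jk_j$, the subsets biject with $i\in\{0,\dots,2^e-1\}$ and yield $\tfrac1{2^e}\sum_i(-1)^{\sum_j i_jk_j}\lambda_i$. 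From this, \ref{pr: alpha_of_sigma} is immediate since $\alpha(\lambda_0)=1$ and $\alpha(\lambda_i)=0$ for $i\ge1$, so only the $i=0$ term (with sign $+1$) survives and $\alpha(\sigma_{k,e})=2^{-e}$; and \ref{pr: sigma_1_delta_and_lambda} follows by setting $k=1$, using $(-1)^{i_0}=(-1)^i$ and the identity $\delta_{2^e}=\sum_{i=0}^{2^e-1}(-1)^i\lambda_i$ from the remark after Definition \ref{def: lambda}.

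The remaining two relations use the same factorwise idea. For \ref{pr: recursive_split}, since $k<2^e$ forces bit $e$ of $k$ to vanish, $\sigma_{k,e+1}$ and $\sigma_{2^e+k,e+1}$ agree in all factors indexed $j<e$ and differ only in the top factor, which is $(\lambda_0+\lambda_{2^e})$ for the former and $(\lambda_0-\lambda_{2^e})$ for the latter; adding them replaces that factor by $2\lambda_0$ and returns $\sigma_{k,e}$. For \ref{pr: inter_orthogonality} I would multiply $\sigma_{l,e'}$ by $\sigma_{k,e}$: the $e'-e$ high factors of $\sigma_{l,e'}$ are untouched, while for each $j<e$ the paired factors double (when $l_j=k_j$) or annihilate (when $l_j\ne k_j$); hence the product vanishes unless $l$ and $k$ share their lowest $e$ binary digits, i.e.\ $l\equiv k\pmod{2^e}$, in which case the $e$ doublings cancel the $2^{-e}$ and leave exactly $\sigma_{l,e'}$ — this yields the stated dichotomy and its $l\le 2^e$ specialization. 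Finally \ref{pr: extracting_coordinates_orthogonality} follows from \ref{pr: idem_orth} together with the fact that $\{\sigma_{k,e}\}$ is a basis: the $2^e$ orthogonal idempotents are nonzero by \ref{pr: alpha_of_sigma}, hence linearly independent, hence a basis of the $2^e$-dimensional space $\Lambda_e\otimes\mathbb Q$, so writing $\eta=\sum_l a_l\sigma_{l,e}$ gives $\sigma_{k,e}\eta=\sum_l a_l\,\sigma_{k,e}\sigma_{l,e}=a_k\sigma_{k,e}$.

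I do not expect a serious obstacle: essentially all the content sits in the opening structural remark identifying the $g_j$ as commuting involutions, after which each part is a one- or two-line manipulation. The step demanding the most care is \ref{pr: inter_orthogonality}, where one must correctly isolate the surviving pairs $(l,k)$ by matching the lowest $e$ binary digits and verify the numerical factors recombine to exactly $\sigma_{l,e'}$; the only other subtlety is that \ref{pr: extracting_coordinates_orthogonality} relies on completeness of the $\sigma$-system, which is why I would prove \ref{pr: alpha_of_sigma} (guaranteeing each $\sigma_{k,e}\ne0$) before \ref{pr: extracting_coordinates_orthogonality}.
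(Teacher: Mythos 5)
Your proof is correct and takes essentially the same route as the paper's: both rest on the observation that the $\lambda_{2^j}$ are commuting involutions (Lemma \ref{cr: lambdaqsquared}), so that $(\lambda_0\pm\lambda_{2^j})^2=2(\lambda_0\pm\lambda_{2^j})$ and $(\lambda_0+\lambda_{2^j})(\lambda_0-\lambda_{2^j})=0$, after which every part is a factorwise computation. The only cosmetic differences are that the paper proves part~\ref{pr: sigma_in_terms_of_lambda} by induction on $e$ where you expand the product directly via Corollary \ref{cr: p_adic_expansion_rep_ring}, and that your appeal in part~\ref{pr: extracting_coordinates_orthogonality} to completeness of the $\sigma$-system is unnecessary (the hypothesis already presents $\eta$ in $\sigma$-coordinates, so part~\ref{pr: idem_orth} plus distributivity suffices), though harmless.

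One point deserves comment. In part~\ref{pr: inter_orthogonality} what you actually prove is the criterion $l\equiv k\pmod{2^e}$, and this is the \emph{correct} statement; the condition printed in the proposition, $l=k$ or $|l-k|\geq 2^e$, is strictly weaker and the dichotomy it asserts is false: for instance $\sigma_{3,2}\sigma_{0,1}=0$ because $(\lambda_0-\lambda_1)(\lambda_0+\lambda_1)=0$, even though $|3-0|\geq 2^1$ and $\sigma_{3,2}\neq 0$ by part~\ref{pr: alpha_of_sigma}. So your closing claim that the congruence criterion ``yields the stated dichotomy'' is not literally true --- nothing could yield it, since as printed it is wrong. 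The paper's own proof has the same shape as yours but contains the faulty inference that $|l-k|\geq 2^e$ forces $l_j=k_j$ for $j<e$; after that reduction it performs exactly your pairing computation, so in substance both proofs establish the congruence version. Since every later application in the paper (e.g.\ in Lemma \ref{lm: C_e_critical_coefficients} and Theorem \ref{th: HK_function_A_2_char_2}) concerns pairs with $l\equiv k\pmod{2^e}$, your formulation is the one the paper actually needs, and your proof of it is sound.
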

\begin{proof}
    Parts \ref{pr: sigma_1_delta_and_lambda} and \ref{pr: alpha_of_sigma} follow directly from \ref{pr: sigma_in_terms_of_lambda}.
\begin{enumerate}[label = (\alph*)]
\item From Corollary \ref{cr: lambdaqsquared} and the fact that $\lambda_0$ is the unit of the Han–Monsky ring, one checks that $(\lambda_0+\lambda_{2^n})^2=2(\lambda_0+\lambda_{2^n})$ and $(\lambda_0+\lambda_{2^n})(\lambda_0-\lambda_{2^n}) = 0.$ Applying the definition, the claim follows.
\item Follows directly from part \ref{pr: idem_orth}.
\item Both by the fact that $k<2^n$ and $|l-k|\geq 2^n$ we conclude that $l_j=k_j$ for $j=0,\dots,n-1$. Thus,
\begin{gather*}
    \left(\prod^{n-1}_{j=0} (\lambda_0+(-1)^{k_j}\lambda_{2^j})\right)\cdot\left(\prod^{m-1}_{j=0}(\lambda_0+(-1)^{l_j}\lambda_{2^j})\right) = \\
    \left(\prod^{n-1}_{j=0} (\lambda_0+(-1)^{l_j}\lambda_{2^j})^2\right)\cdot\left(\prod^{m-1}_{j=n}(\lambda_0+(-1)^{l_j}\lambda_{2^j})\right)= 2^n\prod^{m-1}_{j=0}(\lambda_0+(-1)^{l_j}\lambda_{2^j}).
\end{gather*}
\item By definition, $\sigma_{k,e+1} + \sigma_{2^{e}+k,e+1} = \frac{1}{2}\left(\sigma_{k,e}\cdot (\lambda_0+\lambda_{2^e})+\sigma_{k,e}\cdot (\lambda_0-\lambda_{2^e})\right) = \sigma_{k,e}.$
\item By induction on $e$. The base case is clear since $\sigma_{0,1} = \frac{1}{2}(\lambda_0+\lambda_1)$ and $\sigma_{1,1} = \frac{1}{2}(\lambda_0-\lambda_1)$. Now, by the induction hypothesis,
\begin{gather*}
\prod_{j=0}^{e-1} (\lambda_0+(-1)^{k_j}\lambda_{2^j}) = (\lambda_0+(-1)^{k_{e-1}}\lambda_{2^{e-1}})\sum_{i=0}^{2^{e-1}-1} (-1)^{\sum_{j=0}^{e-2} i_jk_j}\lambda_i,
\end{gather*}
which yields the formula.
\begin{gather*}
    \sigma_{1,e} := \frac{1}{2^e}(\lambda_0-\lambda_1)\prod_{j=1}^{e-1}(\lambda_0+\lambda_{2^j}) = \frac{1}{2^e}\left(\lambda_0-\lambda_1+\dots+(-1)^{e-1}\lambda_{2^e-1}\right) = \frac{1}{2^e}\delta_{2^e}.
\end{gather*}

\end{enumerate}
\end{proof}
\begin{remark}
    Since the $\sigma$-elements form a $\mathbb{Q}$-basis of $\Lambda_e\otimes \mathbb{Q}$, the statement of \ref{pr: sigmaproperties}\ref{pr: idem_orth} represents the fact that $\Lambda_e\otimes \mathbb{Q}$ is isomorphic as a $\mathbb{Q}$-algebra to the direct product of $p^e$ copies of $\mathbb{Q}$, where the $\sigma$-elements would be the canonical basis of this product.
\end{remark}

Now that we have settled these first properties, we provide two technical lemmas that we will use for computations in later sections. They establish the interaction of the $\sigma$-basis with some elements of the $\lambda$-basis.
\begin{lemma}
    \label{lm: lambda_sigma_product_e}
    Let $0\leq k\leq 2^e-1$, $e\geq 1$, and denote $k = k_0+2k_1+...+2^{e-1}k_{e-1}$ where $0\leq k_j\leq 1$ integers. Then, $\sigma_{k,e}\lambda_{2^e-1} = (-1)^{\sum_{j=0}^{e-1} k_j}\sigma_{k,e}.$
\end{lemma}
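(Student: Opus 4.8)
The plan is to induct on $e$, with the whole argument driven by a single \emph{eigenvector} identity for the binary factors that build up $\sigma_{k,e}$. Namely, for each $j$,
\begin{equation*}
(\lambda_0 + (-1)^{k_j}\lambda_{2^j})\,\lambda_{2^j} = (-1)^{k_j}\,(\lambda_0 + (-1)^{k_j}\lambda_{2^j}).
\end{equation*}
I would record this first: expanding the left side and using that $\lambda_0$ is the unit together with $\lambda_{2^j}^2 = \lambda_0$ (Lemma \ref{cr: lambdaqsquared}) gives $\lambda_{2^j} + (-1)^{k_j}\lambda_0$, which is exactly $(-1)^{k_j}$ times the original factor. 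Note this is already valid for $j=0$, since $\lambda_1^2=\lambda_0$ sits in the characteristic-$2$ multiplication table of $\Lambda_1$.

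Next I would put $\lambda_{2^e-1}$ into a shape compatible with the binary splitting of $\sigma_{k,e}$. Because $2^e-1 = 1+2+\cdots+2^{e-1}$, Corollary \ref{cr: p_adic_expansion_rep_ring} (equivalently, one application of Lemma \ref{lm: lambda_p_adic_split}\ref{lm: p_adic_split} with $q=2^{e-1}$) yields the factorization $\lambda_{2^e-1} = \lambda_{2^{e-1}}\,\lambda_{2^{e-1}-1}$. In parallel I peel off the top factor of $\sigma_{k,e}$, writing $\sigma_{k,e} = \tfrac12(\lambda_0 + (-1)^{k_{e-1}}\lambda_{2^{e-1}})\,\sigma_{k',e-1}$ with $k' = k_0+2k_1+\cdots+2^{e-2}k_{e-2}$, so that $0\le k'\le 2^{e-1}-1$. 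For the base case $e=1$ I check directly that $\sigma_{0,1}\lambda_1=\sigma_{0,1}$ and $\sigma_{1,1}\lambda_1=-\sigma_{1,1}$, which matches $(-1)^{k_0}$ and is just the eigenvector identity for $j=0$.

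For the inductive step I use commutativity of $\Gamma$ to regroup
\begin{equation*}
\sigma_{k,e}\lambda_{2^e-1} = \bigl[\tfrac12(\lambda_0 + (-1)^{k_{e-1}}\lambda_{2^{e-1}})\lambda_{2^{e-1}}\bigr]\,\bigl[\sigma_{k',e-1}\lambda_{2^{e-1}-1}\bigr].
\end{equation*}
The first bracket equals $(-1)^{k_{e-1}}\cdot\tfrac12(\lambda_0 + (-1)^{k_{e-1}}\lambda_{2^{e-1}})$ by the eigenvector identity, and the second equals $(-1)^{\sum_{j=0}^{e-2}k_j}\,\sigma_{k',e-1}$ by the induction hypothesis. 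Multiplying the two scalars and reassembling the factors gives $(-1)^{\sum_{j=0}^{e-1}k_j}\,\sigma_{k,e}$, as claimed.

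The one point I would be careful about — the only place where something could go wrong — is the regrouping: a priori $\lambda_{2^e-1}$ might mix the top binary factor with the lower ones, and I must be sure that $\lambda_{2^{e-1}}$ acts solely on the top factor while $\lambda_{2^{e-1}-1}=\prod_{i<e-1}\lambda_{2^i}$ acts solely on $\sigma_{k',e-1}$. This is exactly what the factorization $\lambda_{2^e-1}=\lambda_{2^{e-1}}\lambda_{2^{e-1}-1}$ together with commutativity secures, so once that factorization is established the remainder is routine sign bookkeeping and the induction closes.
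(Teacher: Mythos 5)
Your proof is correct, but it takes a genuinely different route from the paper's. The paper expands $\sigma_{k,e}$ over the $\lambda$-basis via Proposition \ref{pr: sigmaproperties}\ref{pr: sigma_in_terms_of_lambda}, applies the reflection identity $\lambda_i\lambda_{2^e-1}=\lambda_{2^e-i-1}$ (Lemma \ref{lm: lambda_p_adic_split}\ref{lm: p_adic_opposite}) to each term, and observes that this reflection complements every binary digit, so re-indexing the sum produces exactly the sign $(-1)^{\sum k_j}$ --- a direct, induction-free computation. You instead stay with the product definition of $\sigma_{k,e}$, factor $\lambda_{2^e-1}=\prod_{j}\lambda_{2^j}$ via Corollary \ref{cr: p_adic_expansion_rep_ring}, and drive everything from the eigenvector identity $(\lambda_0+(-1)^{k_j}\lambda_{2^j})\lambda_{2^j}=(-1)^{k_j}(\lambda_0+(-1)^{k_j}\lambda_{2^j})$, which rests only on $\lambda_{2^j}^2=\lambda_0$ (Lemma \ref{cr: lambdaqsquared}); the reflection lemma is never used. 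The paper's argument is shorter, but yours is more structural: it proves the stronger statement that $\sigma_{k,e}$ is a simultaneous eigenvector of multiplication by each individual $\lambda_{2^j}$ with eigenvalue $(-1)^{k_j}$, from which the lemma (and, with the same peel-off of the top factor, the follow-up Lemma \ref{lm: lambda_sigma_product_e+1}) falls out by multiplying eigenvalues. One remark: your induction is not strictly needed --- commutativity lets you apply the eigenvector identity factor by factor across the whole product $\frac{1}{2^e}\prod_j(\lambda_0+(-1)^{k_j}\lambda_{2^j})\lambda_{2^j}$ in one pass --- but organizing it as an induction is harmless and your handling of the regrouping concern is sound.
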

\begin{proof}
By \ref{pr: sigmaproperties}\ref{pr: sigma_in_terms_of_lambda}, recall that $\sigma_{k,e} = \sum_{i=0}^{2^e-1} (-1)^{\sum i_jk_j} \lambda_i$. Observe that $2^e-1 = 1+2+\dots+2^{e-1}$, so $2^e-i-1 = (1-i_0)+(1-i_1)2+(1-i_2)4+\dots+(1-i_{e-1})2^{e-1}$. Therefore,
\begin{gather*}
    \lambda_{2^e-1}\sigma_{k,e} = \sum_{i=0}^{2^e-1} (-1)^{\sum i_jk_j} \lambda_{2^e-i-1} = \sum_{i=0}^{2^e-1} (-1)^{\sum (1-i_j)k_j} \lambda_i = (-1)^{\sum_{j=0}^{e-1} k_j}\sigma_{k,e}.
\end{gather*}
\end{proof}
\begin{lemma}
    \label{lm: lambda_sigma_product_e+1}
    Let $0\leq k\leq 2^{e+1}-1$, $e\geq 1$. In the same notation as the previous lemma, we have that $\sigma_{k,e+1}\lambda_{2^e-1} = (-1)^{\sum_{j=0}^{e-1} k_j}\sigma_{k,e+1}.$
\end{lemma}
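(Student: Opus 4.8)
The plan is to reduce this statement to the previous lemma by peeling off the top binary digit of $\sigma_{k,e+1}$. Writing $k=k_0+2k_1+\dots+2^ek_e$ and setting $k' := k_0+2k_1+\dots+2^{e-1}k_{e-1}$ (the residue of $k$ modulo $2^e$), the defining product for $\sigma_{k,e+1}$ splits as
\begin{gather*}
\sigma_{k,e+1} = \frac{1}{2^{e+1}}\prod_{j=0}^{e}(\lambda_0+(-1)^{k_j}\lambda_{2^j}) = \frac{1}{2}\,\sigma_{k',e}\,(\lambda_0+(-1)^{k_e}\lambda_{2^e}),
\end{gather*}
since the first $e$ factors recombine into $2^e\sigma_{k',e}$. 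Here $k'$ lies in the range $0\leq k'\leq 2^e-1$ covered by Lemma \ref{lm: lambda_sigma_product_e}, and it shares the low-order digits $k_0,\dots,k_{e-1}$ with $k$.

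Next I would multiply by $\lambda_{2^e-1}$ and use commutativity of $\Gamma$ to move $\lambda_{2^e-1}$ past the factor $(\lambda_0+(-1)^{k_e}\lambda_{2^e})$, so that it acts directly on $\sigma_{k',e}$. Lemma \ref{lm: lambda_sigma_product_e} then gives $\sigma_{k',e}\lambda_{2^e-1} = (-1)^{\sum_{j=0}^{e-1}k'_j}\sigma_{k',e} = (-1)^{\sum_{j=0}^{e-1}k_j}\sigma_{k',e}$, the second equality because $k'$ and $k$ have the same digits for $j\leq e-1$. Substituting back and recombining the factor $(\lambda_0+(-1)^{k_e}\lambda_{2^e})$ with $\frac{1}{2}\sigma_{k',e}$ recovers $\sigma_{k,e+1}$, yielding
\begin{gather*}
\sigma_{k,e+1}\lambda_{2^e-1} = (-1)^{\sum_{j=0}^{e-1}k_j}\cdot\frac{1}{2}\sigma_{k',e}(\lambda_0+(-1)^{k_e}\lambda_{2^e}) = (-1)^{\sum_{j=0}^{e-1}k_j}\sigma_{k,e+1}.
\end{gather*}

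The main point to be careful about — the only place this differs from Lemma \ref{lm: lambda_sigma_product_e} — is that the top digit $k_e$ must \emph{not} appear in the sign. This is exactly what the factorization enforces: $\lambda_{2^e-1}$ sees only the level-$e$ factor $\sigma_{k',e}$, whose sign involves $k_0,\dots,k_{e-1}$, while the top factor $(\lambda_0+(-1)^{k_e}\lambda_{2^e})$ is merely carried along untouched. Alternatively one could expand via \ref{pr: sigmaproperties}\ref{pr: sigma_in_terms_of_lambda} together with the reflection $2^e-i-1\leftrightarrow i$ as in the previous lemma, but keeping track of the digit at index $2^e$ would be more delicate; the factorization approach sidesteps this entirely, so I anticipate no genuine obstacle beyond this bookkeeping.
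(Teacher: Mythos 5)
Your proof is correct and follows essentially the same route as the paper: write $k'$ for the residue of $k$ modulo $2^e$, factor $\sigma_{k,e+1}$ as $\sigma_{k',e}$ times the top factor $(\lambda_0+(-1)^{k_e}\lambda_{2^e})$, apply Lemma \ref{lm: lambda_sigma_product_e} to the $\sigma_{k',e}$ part, and recombine. In fact your version is slightly more careful than the paper's, which omits the normalizing factor $\tfrac{1}{2}$ in the identity $\sigma_{k,e+1}=\tfrac{1}{2}\sigma_{k',e}(\lambda_0+(-1)^{k_e}\lambda_{2^e})$ that you correctly track.
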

\begin{proof}
    Write $k' = k$ if $k\leq 2^e-1$ and $k-2^e$ if $2^e\leq k \leq 2^{e+1}-1$. Then $\sigma_{k,e+1} = \sigma_{k',e}(\lambda_0+(-1)^{k_e}\lambda_{2^e})$, and we apply Lemma \ref{lm: lambda_sigma_product_e}:
    \begin{align*}
        \lambda_{2^e-1}\sigma_{k,e+1} &= \lambda_{2^e-1}\sigma_{k',e}(\lambda_0+(-1)^{k_e}\lambda_{2^e})\\
        &= (-1)^{\sum_{j=0}^{e-1} k_j} \sigma_{k',e}(\lambda_0+(-1)^{k_e}\lambda_{2^e}) = (-1)^{\sum_{j=0}^{e-1} k_j}\sigma_{k,e+1}.
    \end{align*}
\end{proof}
\begin{remark}
    Thus, for example, $\sigma_{2^e,2^{e+1}}\lambda_{2^e-1} = \sigma_{2^e+1,2^{e+1}},$ and $\sigma_{2^e,2^{e+1}}\lambda_{2^e-1} = -\sigma_{2^e+1,2^{e+1}}$.
    \end{remark}
    \subsection{The Han–Monsky algorithm}\label{subsec: hmalgorithm}
    As we mentioned at the beginning of the section, the Han–Monsky ring is particularly powerful for determining the Hilbert–Kunz function of diagonal hypersurfaces. In this section, we will recap some of the results in this direction, as presented in \cite{hanmonsky}. Let $R$ denote the ring of power series at the origin of a diagonal hypersurface: 
    \begin{gather*}
        R := \frac{k[[x_0,\dots,x_d]]}{(x_0^{e_0}+\dots+x_d^{e_d})}.
    \end{gather*}
    Note that none of the following results depend on the field $k$ but only on its characteristic, so we can assume that $k=\mathbb{F}_p$.

    \begin{lemma}[Lemma 5.6, \cite{hanmonsky}]
    \label{lm: lemmarepring}
        Let $q=p^e$. The $k$-object $k[[x_0,\dots,x_d]]/(x_0^{q},\dots,x_d^{q})$ with $T$-action $x_0^{e_0}+\dots+x_d^{e_d}$ is $\prod_{i=0}^d \delta_{q/e_i}$ in $\Gamma'$. Therefore,
        \begin{gather*}
            \HK_e(R) = e_0\dots e_dD\left(\frac{q}{e_0},\dots,\frac{q}{e_d}\right).
        \end{gather*}
    \end{lemma}
    
    This result expresses that we can understand the Hilbert–Kunz function of diagonal hypersurfaces by studying the function $D(a_0,\dots,a_s)$.
    
    \begin{definition}
    \label{df: vectors}
        Let $\beta$ be a vector of positive real numbers and $p$ be a prime. Let $\mu$ be a positive integer that we call the period. We obtain the following data associated with these $\beta, \mu$, and $p$.
        \begin{enumerate}
            \item Let $r\in \mathbb{Z}_{\geq 0}^{d+1}$ and $z\in [0,1)^{d+1}$ such that $\beta = r + z$.
            \item Let $R\in \mathbb{Z}_{\geq 0}^{d+1}$ and $z'\in [0,1)^{d+1}$ such that $p^{\mu}\beta = R + z'$.
            \item Let $t := p^{\mu}z - z' \in \mathbb{Z}_{\geq 0}^{d+1}$.
        \end{enumerate}
        We denote the coordinates as $r = (r_0,\dots,r_d)$.
    
        Let $v,w\in \mathbb{R}^{d+1}$ be arbitrary vectors. We will say that $v$ is \textbf{equivalent} to $w$, denoted $v\sim w$, if we can obtain it from $w$ by replacing $v_i$ by $1-v_i$ for an even number of indices $i$ (so in particular, if they are equal to each other). We also introduce the notation $v^*:= (1-v_0,v_1,\dots,v_d)$.
        
        Finally, we define the function $\ell^\sharp$:
        \begin{gather*}
            \ell^\sharp = \begin{cases} \ell_k(t) & \text{if } \sum r_i \text{ even,}\\ \ell_k(p^{\mu}-1-t_0,t_1,\dots,t_d) & \text{if } \sum r_i \text{ odd,}\end{cases}
        \end{gather*}
        for $\ell_k$ as in Definition \ref{def: alpha_D_k_and_ell_k}. We will refer to the vectors $r,z,R,z',t$ and the scalar $\ell^\sharp$ as the data associated with $\beta,\mu$ and $p$.
    \end{definition}
    
    \begin{lemma}[Theorem 5.3 and remark, \cite{hanmonsky}]
    \label{lm: th53}
        Let $\beta$ be a vector of positive real numbers, and $\mu$ a positive integer, and let $r,z,R,z'$, and $t$ be the data associated with $\beta,\mu$, and $p$. Suppose that either one of these two sets of conditions is satisfied by the data:
        \begin{enumerate}
            \item $z'\sim z$ and $\sum R_i \equiv \sum r_i \mod{2}$, or
            \item $z'\sim z^*$ and $\sum R_i \not\equiv \sum r_i \mod{2}$.
        \end{enumerate}  Then, there is a rational number $C$ and rational numbers $\Delta_0,\Delta_1,\Delta_2,\dots$ such that 
        \begin{gather*}
            D(p^e\beta) = Cp^{de} + \Delta_e,
        \end{gather*} and $\Delta_{e+\mu}=\ell^\sharp \Delta_e$ for all $e\geq 0$, where $\ell^\sharp$ was defined in \ref{df: vectors}.
    \end{lemma}
    
    The following theorem shows how Lemma \ref{lm: th53} can be used to compute the Hilbert–Kunz multiplicity (and function) of a diagonal hypersurface.

    \begin{theorem}[Han–Monsky algorithm, \cite{hanmonsky}, \cite{yos}]
        \label{th: han_monsky_algorithm}
        Let $p>0$, and define
        \begin{gather*}
        R = \frac{\mathbb{F}_p[[x_0,\dots,x_d]]}{(x_0^{e_0}+\dots+x_d^{e_d})}.
        \end{gather*}
        Then, there exist integers $n_0\geq 0$ and $\mu\geq 1$ such that the data associated to $\beta = p^{n_0}(1/e_0,\dots,1/e_d)$ and $\mu$ verify the first set of conditions in Lemma \ref{lm: th53}. In that case, then
        \begin{gather*}
            \eHK{R} = \frac{\text{HK}_{n_0+\mu}(R)-\HK_{n_0}(R)\ell^\sharp}{p^{dn_0}-\ell^\sharp},
        \end{gather*}
        where $\ell^\sharp$ is as in Definition \ref{def: alpha_D_k_and_ell_k}, for these $\beta$ and $\mu$.
    \end{theorem}
        \begin{proof}
            By Lemma \ref{lm: lemmarepring}, we know that $\HK_e(S_{p,d}) = e_0\dots e_d D(q\beta)$ for $\tilde{\beta} = \left(1/e_0,\dots,1/e_d\right)$. It can be seen that, if $e_i$ is divisible by $p$ for some $i$, then there is no $\mu$ such that $\tilde{\beta}$ and $\mu$ verifies none of the conditions in \ref{lm: th53}, since $z'$ will always have a zero coordinate. Hence, let $n_0\geq 0$ be the smallest non-negative integer such that none of the denominators in $p^{n_0}\tilde{\beta}$ are divisible by $p$, and define $\beta := p^{n_0}\tilde{\beta}$.

            Now, one can check that there exists $\mu\geq 1$ such that all the data associated to $\beta$ and $\mu$ verify the first set of conditions in Lemma \ref{lm: th53}: indeed, let $\mu\geq 1$ be any $\mu$ such that $(p^{\mu+n_0}-1)p^{n_0}\beta \in \mathbb{Z}^{d+1}$, and possibly changing $\mu$ by $2\mu$, we can also assume that $(p^\mu-1)\sum \beta_i \equiv 0 \bmod{2}$. The claim follows observing that $\sum R_i-\sum r_i = (p^\mu-1)\sum \beta_i \equiv 0 \bmod{2}$.
            
            Thus, applying Lemma \ref{lm: th53}, there exists $C$, $\ell^\sharp$, and $\Delta_e$ such that
            \begin{gather*}
                2^d3D(p^e\beta) = 2^d3Cp^{de}+2^d3\Delta_e,
            \end{gather*}
            where $\Delta_{e+\mu} = \ell^\sharp \Delta_e$ for all $e\geq 0$. Note that $\eHK{R} = e_0\dots e_dC$ and let $c_1 = -e_0\dots e_d\Delta_{n_0}$, so
            \begin{gather*}
                \HK_{n_0+e\mu}(R) = \eHK{R}p^{d(n_0+e\mu)}-c_1(\ell^\sharp)^{e}, e\geq 0.
            \end{gather*}
        
            The above implies that
            \begin{gather*}
                \begin{dcases}
                    \eHK{R} - c_1 = \HK_{n_0}(R),\\
                    \eHK{R}p^{dn_0} - c_1\ell^\sharp = \text{HK}_{n_0+\mu}(R).
                \end{dcases}
            \end{gather*}
            Replacing $c_1 = \eHK{R} - \HK_{n_0}(R)$ on the second equation, we obtain
            \begin{gather*}
                \eHK{R}p^{dn_0} - (\eHK{R} - \HK_{n_0}(R))\ell^\sharp = \text{HK}_{n_0+\mu}(R),
            \end{gather*}
            from which the formula follows.
        \end{proof}
        \begin{remark}
        The Han–Monsky algorithm is then the process of finding $n_0$ and $\mu$, and subsequently computing $\HK_{n_0}(R)$, $\HK_{\mu+n_0}(R)$, and $\ell^\sharp$. Note that, in general, $\mu$ can be chosen smaller than what the proof suggests.
        \end{remark}

\section{On the Hilbert–Kunz multiplicity of the \texorpdfstring{$A_1$}{A1} and \texorpdfstring{$A_2$}{A2} singularities, \texorpdfstring{$p>3$}{p gt 3}}\label{sec:strongwatanabeyoshida}
We provide a characteristic-free definition of the $A_1$ and $A_2$ singularities:
\begin{definition}[$A_1$ and $A_2$ singularities, \cite{greuel}]
    \label{df: Qd}
    Let $k$ be a field of positive characteristic $p>0$. The rings $R_{p,d} := \mathbb{F}_p[[x_0,...,x_d]]/(Q_d)$ where
    \begin{gather*}
    Q_d := \begin{cases}
    x_0x_1+\dots + x_{d-1}x_d, & \text{if $d$ is odd},\\
    x_0x_1 + \dots + x_{d-2}x_{d-1} + x_d^2, & \text{if $d$ is even},
    \end{cases}
    \end{gather*}
    are the $A_1$ singularities. The rings $S_{p,d} \coloneqq k[[x_0,...,x_d]]/(P_d)$ where
    \begin{equation*}
        P_d = \begin{cases}
        x_0x_1 + \dots + x_{d-1}^2 + x_d^3 & \text{if $d$ is odd},\\
        x_0x_1 + \dots + x_{d-2}x_{d-1} + x_d^3 & \text{if $d$ is even}.
        \end{cases}
    \end{equation*}
    are the $A_2$ singularities.
    \end{definition}
    \begin{remark}
    When characteristic is not $2$, a simple linear change of variables shows that $Q_d \sim x_0^2+\dots+x_d^2$ and $P_d \sim x_0^2+\dots+x_{d-1}^2+x_d^3$.
    \end{remark}
The point of this section is to show that 
\begin{gather}
    \label{fr: the_inequality}
    \eHK{S_{p,d}}>\eHK{R_{p,d}}, \forall p>3, d\geq 2
\end{gather} In §\ref{subsec: the_eHK_of_A1_and_A2}, we show that we can apply Theorem \ref{th: han_monsky_algorithm} to obtain a simpler formula for $\eHK{R_{p,d}}$ and $\eHK{S_{p,d}}$. In §\ref{subsec: technical_computations}, we carefully study two $k$-objects in the Han–Monsky ring to derive two inequalities that we bring together in §\ref{subsec: main_inequality} to derive the inequality (\ref{fr: the_inequality}).

\subsection{The Han–Monsky algorithm for the \texorpdfstring{$A_1$}{A1} and \texorpdfstring{$A_2$}{A2} singularities, \texorpdfstring{$p>3$}{p gt 3}}\label{subsec: the_eHK_of_A1_and_A2}

It turns out that in the case that the diagonal hypersurface is an $A_1$ or an $A_2$ singularity, we can choose $\mu=1$ and $n_0=0$ in Theorem \ref{th: han_monsky_algorithm}. Proving this fact is the content of the following lemma:
\begin{lemma}
\label{lm: conditions}
    Let $\beta = (\frac{1}{2},\dots,\frac{1}{2},\frac{1}{3})$, $\mu = 1$, and $p>3$. Then, the data $r,z,R,z',t,\ell^\sharp$ associated to $\beta,\mu$ and $p$ satisfy the hypotheses of Lemma \ref{lm: th53}. Also, $\sum r_i = 0$, so $\ell^\sharp = \ell_k(t)$. The same follows taking $\beta = (\frac{1}{2},\dots,\frac{1}{2})$, $\mu = 1$ and $p>2$.
\end{lemma}
\begin{proof}
Let us compute $r,z$ and $R$ first, according to Definition \ref{df: vectors}:
\begin{gather*}
    r = (0,\dots,0), z=\beta, R = (\left\lfloor\frac{p}{2}\right\rfloor,\dots,\left\lfloor\frac{p}{2}\right\rfloor,\left\lfloor\frac{p}{3}\right\rfloor).
\end{gather*}
Firstly, observe that $z = z^*$, given that $z_0 = \frac{1}{2}$. Thus, the hypotheses of the Theorem will be satisfied regardless of the parity of $\sum R_i$ once we prove that $z'$ is equivalent to $z$. 

When computing $z'$, two cases arise: either $p\equiv 1\mod{3}$ or $p\equiv 2\mod{3}$.
\begin{enumerate}
    \item If $p\equiv 1\mod{3}$, then $z' = (\frac{1}{2},\dots,\frac{1}{2},\frac{1}{3}) = z$.
    \item If $p\equiv 2\mod{3}$, then $z' = (\frac{1}{2},\dots,\frac{1}{2},\frac{2}{3})$. In this case, given that $s\geq 1$, and the first $s$ entries of $\beta$ are $1/2$, one sees that $z'$ is equivalent to $z$ by observing that $z' = (1-z_0,z_1,\dots,z_{s-1},1-z_s)$.
\end{enumerate}
The result follows.

As for $\beta = (1/2,\dots,1/2)$, $\mu = 1$ and $p>2$, it also happens that $z = z^*$, and so we do not have to bother about the parity of $\sum R_i$ either. Moreover, $p\equiv 1\pmod{2}$ for every $p>2$, so $z = z'$.
\end{proof}

\begin{corollary}
\label{cor: yoshida_theorem_mu_1}
For all $p>2$ and $d\geq 0$,
    \begin{gather*}
        e_{\text{HK}}(R_{p,d}) = 1 + \frac{\text{HK}_1(R_{p,d}) - p^d}{p^d-\ell^\sharp(R_{p,d})},
    \end{gather*}
    where $\ell^\sharp(R_{p,d}) = \alpha(\lambda_a^{d+1})$ for $a = \lfloor\frac{p}{2}\rfloor$. Similarly, for all $p>3$ and $d\geq 0$, it follows that
    \begin{gather*}
        e_{\text{HK}}(S_{p,d}) = 1 + \frac{\text{HK}_1(S_{p,d}) - p^d}{p^d-\ell^\sharp(S_{p,d})},
    \end{gather*}
    where $\ell^\sharp(S_{p,d}) = \alpha(\lambda_a^d\lambda_b)$ for $a = \lfloor\frac{p}{2}\rfloor$ and $b = \lfloor\frac{p}{3}\rfloor$.
\end{corollary}
\begin{proof}
    This is direct consequence of Theorem \ref{th: han_monsky_algorithm} and Lemma \ref{lm: conditions}, noting that $\HK_0(R) = 1$. %HK_1(R)-\ell^\sharp = HK_1-p^d+(p^d-\ell^\sharp)
\end{proof}

\begin{remark} 
    Other diagonal hypersurfaces for which the values $n_0=0$ and $\mu=1$ also work are, for example, all $x_0^{e_0}+\dots+x_d^{e_d}$ such that $e_0 = 2$ and $p\equiv \pm 1\mod{e_i}$ for every $i=1,\dots,d$. In this case, it happens that $z = z^*$, and therefore $z'$ is equivalent to both $z$ and $z^*$, and both sets of conditions in Lemma \ref{lm: th53} are satisfied. An example of such diagonal hypersurfaces is, for instance, the $A_3$ singularities with $p\geq 5$. More generally, the Hilbert–Kunz multiplicity of an $A_n$ singularity (see \cite{greuel}) verifies the formula in the theorem above whenever $p\equiv \pm 1\pmod{n+1}$.

    However, the formula does not work in full generality: one example of this is the $A_4$ surface singularity in characteristic 7, which has the equation $x^2+y^2+z^5$. The formula would give the value $43/25$, but it is known that this singularity has Hilbert–Kunz multiplicity $2-1/5 = 9/5$ (see, for instance, Example 4.1 \cite{wy01}).
\end{remark}
\subsection{The coefficients of the powers of two \texorpdfstring{$k$}{k}-objects}\label{subsec: technical_computations} In this section, we prove some necessary technical lemmas about two particular $k$-objects of the Han–Monsky ring that show up in the computation of $\HK_1(R_{p,d})$ and $\HK_1(S_{p,d})$. 

\begin{definition}
    In the Han–Monsky ring $\Gamma$ over a field $k$, we define the following two $k$-objects:
    \begin{gather*}
        \gamma := \left(\frac{k[x]}{(x^{p})},x^2\right) \quad \text{and} \quad  \eta := \left(\frac{k[x]}{(x^{p})},x^3\right).
    \end{gather*}
    Observe that $\gamma,\eta \in \Lambda_1$ (see \ref{lambdadef} for the definition of $\Lambda_e$). We will use the following notation for the coordinates of the powers of $\gamma$ over the $\lambda$-basis:
    \begin{gather*}
    \gamma^d = a_0^{(d)}\lambda_0+\dots+a_{p-1}^{(d)}\lambda_{p-1}.
    \end{gather*}
\end{definition}

\begin{proposition}
    \label{pr: gamma_formula}
    Let $k$ be a field of characteristic $p\geq 3$. Set $a := (p-1)/2$. Then,
    \begin{gather*}
    \gamma = \delta_{a}+\delta_{a+1} = 2\lambda_0-2\lambda_1+\dots+(-1)^{a-1}2\lambda_{a-1}+(-1)^{a}\lambda_{a}.
    \end{gather*}
\end{proposition}
\begin{proof}
Note that $\gamma = 2\delta_{p/2}$ (see \ref{df: representation_ring_rational_coefficients} for the definition of $\delta_{a/b}$).
\end{proof}

\begin{remark}
    It is worth mentioning that the results in this section were obtained in the first place as observations about the matrix representation of certain $k$-objects of the Han–Monsky ring. Yoshida introduced this matrix representation in \cite{yos}: roughly, given an element in $\Lambda_e\subset \Gamma$, it can be represented by the $p^e\times p^e$ symmetric matrix over the $\lambda$-basis that represents the action by multiplication. For instance, in this representation, if $p=2$, then we can express the $\lambda$-basis $\lambda_0,\lambda_1$ and $\lambda_2$ in $\Lambda_1$ as the following 3 matrices
    \begin{gather*}
    \lambda_0 \rightsquigarrow\left(\begin{array}{ccc}
    1 & 0 & 0 \\
    0 & 1 & 0 \\
    0 & 0 & 1
    \end{array}\right) \quad
    \lambda_1 \rightsquigarrow\left(\begin{array}{ccc}
    0 & 1 & 0 \\
    1 & 1 & 1 \\
    0 & 1 & 0
    \end{array}\right) \quad
    \lambda_2 \rightsquigarrow\left(\begin{array}{ccc}
    0 & 0 & 1 \\
    0 & 1 & 0 \\
    1 & 0 & 0
    \end{array}\right)
    \end{gather*}
    Therefore, according to the previous proposition, the $k$-object $\gamma \in \Lambda_1$ can be represented as the matrix
    \begin{gather*}
        \left(\begin{array}{rrr}
            2 & -1 & 0 \\
            -1 & 1 & -1 \\
            0 & -1 & 2
            \end{array}\right)
    \end{gather*}
    This matrix representation is utilized to address several questions about the behavior of $\eHK{R_{p,d}}$ in \cite{pssy25}. Although this is not the language used here, the matrix representation better illustrates many of the computations below. For instance, the first goal of the section is proving that the first $a+1$ coefficients of any power of $\gamma$ form a monotone sequence, where $a = (p-1)/2$ (see Proposition \ref{pr: decreasing_sequence}). These coefficients are the upper half of the first column of the powers of the matrix of $\gamma$.
\end{remark}

\begin{lemma}[\cite{yos}]
    \label{lm: product_gamma_lambda}
Let $k \leq  a = (p-1)/2$. Then, 
\begin{gather*} 
    \gamma\lambda_k = \begin{dcases}
    (-1)^k\left(2\sum_{i=0}^{a-k-1} (-1)^i\lambda_i + \sum_{i=a-k}^{a+k} (-1)^i\lambda_i\right) & (k < a) ;\\
    (-1)^a\delta_p & (k=a),
\end{dcases}
\end{gather*}
\end{lemma}
\begin{proof}
Explicit computation using Lemma \ref{lm: structure_Lambda_1} and Proposition \ref{pr: gamma_formula}. This formula is provided in matrix form in Section 4 of Yoshida's notes \cite{yos}.
\end{proof}

\begin{remark}
    Observe that for $p-1\geq k\geq a+1$, $\lambda_k\gamma = \lambda_{p-1}\lambda_{p-1-k}\gamma$, by \ref{lm: lambda_p_adic_split}\ref{lm: p_adic_opposite}.
\end{remark}
\begin{lemma}
    \label{lm: alpha_of_gamma_power_eta_and_gamma_power}
    Let $p>3$ and $d\geq 1$. Let $a,b,c\in\mathbb{Z}_{\geq 0}$ such that $a = (p-1)/2$ and $p = 3b+c$, and $c<3$. Then, 
    \begin{align*}
        \alpha(\gamma^d\eta) &= 3a_0^{(d)}+\dots +3a_{b-1}^{(d)}+ca_b^{(d)},\\
        \alpha(\gamma^{d+1}) &= 2a_0^{(d)}+\dots+2a_{a-1}^{(d)}+ a_a^{(d)}.
    \end{align*}
    \end{lemma}
    \begin{proof}
        Observe that, by definition and Lemma \ref{lm: product_gamma_lambda},
    \begin{gather*}
    \alpha(\gamma^{d+1}) = \sum_{i=0}^{p-1} (-1)^i a_i^{(d)} \alpha(\lambda_i\gamma) = \sum_{i=0}^{a-1} (-1)^i a_i^{(d)} (-1)^i 2 + (-1)^i a_a^{(d)} (-1)^i.
    \end{gather*}
    \end{proof}

\begin{lemma}
    \label{lm: gamma_power_reflexions_sums_delta_p_power}
    For every $d\geq 1$, $\gamma^d(\lambda_{p-1}+\lambda_0) = 2\cdot p^{d-1}\delta_p.$ Equivalently, writing $\gamma^d = \sum_{i=0}^{p-1} (-1)^ia_i^{(d)}\lambda_i$, then
\begin{gather*}
a_i^{(d)} + a_{p-i}^{(d)} = 2p^{d-1}.
\end{gather*}
In particular, this implies that $a_a^{(d)} = p^{d-1}$, for $a = \frac{p-1}{2}$.
\end{lemma}
\begin{proof}
One can check by Lemma \ref{lm: product_gamma_lambda} that $\gamma(\lambda_0+\lambda_{p-1}) = 2\delta_p$. Now, $(\lambda_0+\lambda_{p-1})^2 = 2(\lambda_0+\lambda_{p-1})$ by \ref{lm: lambda_p_adic_split}. By \ref{pr: delta_p_squared}, the result follows.
\end{proof}

\begin{lemma}[Theorem 4.1, \cite{yos}]
    \label{lm: gamma_power_recursive_formula}
    For $d\geq 2$,
    \begin{gather*}
        \gamma^d-p^{d-1}\delta_p = 2\sum_{r=0}^{a-1}((-1)^r\gamma^{d-1}\lambda_r - p^{d-2}\delta_p).
    \end{gather*}
    Equivalently, for $k = 1,\dots, a$
    \begin{gather*}
        a_{a-k}^{(d)}-p^{d-1} = 2\sum_{r = a-k}^{a-1} (a^{(d-1)}_{a-r}-p^{d-2}).
    \end{gather*}
\end{lemma}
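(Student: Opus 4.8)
The plan is to prove the vector identity directly from the factorization $\gamma^d=\gamma\cdot\gamma^{d-1}$, the point being that a single ``top'' term of the expansion collapses to a multiple of $\delta_p$; the scalar identity is then the coordinate reading of the vector one. By Proposition \ref{pr: gamma_formula} (with $a=(p-1)/2$) we have $\gamma=2\sum_{r=0}^{a-1}(-1)^r\lambda_r+(-1)^a\lambda_a$, so multiplying by $\gamma^{d-1}$ gives
\[
\gamma^d=2\sum_{r=0}^{a-1}(-1)^r\gamma^{d-1}\lambda_r+(-1)^a\gamma^{d-1}\lambda_a .
\]
Everything therefore reduces to evaluating the one term $\gamma^{d-1}\lambda_a$.

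The crux is the identity $\gamma^{d-1}\lambda_a=(-1)^a p^{d-2}\delta_p$ for $d\ge 2$, which I would establish in two moves. First, the case $k=a$ of Lemma \ref{lm: product_gamma_lambda} reads $\gamma\lambda_a=(-1)^a\delta_p$, whence $\gamma^{d-1}\lambda_a=(-1)^a\delta_p\gamma^{d-2}$. Second, $\delta_p$ is \emph{absorbing} for $\gamma$, in the sense that $\delta_p\gamma^{j}=p^{j}\delta_p$ for every $j\ge 0$: since $\gamma(\lambda_0+\lambda_{p-1})=2\delta_p$ (verified in the proof of Lemma \ref{lm: gamma_power_reflexions_sums_delta_p_power}), that same lemma yields
\[
\delta_p\gamma^{j}=\tfrac12\,\gamma^{j+1}(\lambda_0+\lambda_{p-1})=\tfrac12\cdot 2p^{j}\delta_p=p^{j}\delta_p .
\]
Taking $j=d-2$ gives the claimed collapse.

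Substituting back, $\gamma^d=2\sum_{r=0}^{a-1}(-1)^r\gamma^{d-1}\lambda_r+p^{d-2}\delta_p$. Subtracting $p^{d-1}\delta_p$ and using $p^{d-2}-p^{d-1}=-2a\,p^{d-2}$ (as $p=2a+1$) turns this into $\gamma^d-p^{d-1}\delta_p=2\sum_{r=0}^{a-1}\big((-1)^r\gamma^{d-1}\lambda_r-p^{d-2}\delta_p\big)$, the asserted vector identity. For the scalar form I would instead expand $\gamma^d=\sum_{j=0}^{p-1}(-1)^j a_j^{(d-1)}\gamma\lambda_j$ and apply Lemma \ref{lm: product_gamma_lambda} to each $\gamma\lambda_j$, using the remark after that lemma ($\lambda_j\gamma=\lambda_{p-1}\lambda_{p-1-j}\gamma$) together with $\lambda_i\lambda_{p-1}=\lambda_{p-1-i}$ from Lemma \ref{lm: lambda_p_adic_split} to treat the indices $j>a$. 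Collecting the coefficient of $\lambda_i$ for $0\le i\le a-1$ gives
\[
a_i^{(d)}=2\sum_{j=0}^{a-i-1}a_j^{(d-1)}+\sum_{j=a-i}^{a-1}a_j^{(d-1)}+a_a^{(d-1)}+\sum_{j=a+1}^{a+i}a_j^{(d-1)},
\]
and the reflection identity of Lemma \ref{lm: gamma_power_reflexions_sums_delta_p_power} ($a_{a+l}^{(d-1)}=2p^{d-2}-a_{a-l}^{(d-1)}$ and $a_a^{(d-1)}=p^{d-2}$) cancels the middle block and reduces this to $a_i^{(d)}-p^{d-1}=2\sum_{j=0}^{a-i-1}\big(a_j^{(d-1)}-p^{d-2}\big)$, the recursion in coordinates (with $i=a-k$).

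The genuinely substantive step is the collapse $\gamma^{d-1}\lambda_a=(-1)^a p^{d-2}\delta_p$; once the absorbing property of $\delta_p$ is in hand — and it is immediate from Lemma \ref{lm: gamma_power_reflexions_sums_delta_p_power} — the vector identity is a one-line rearrangement. I expect the only real friction to lie in the index bookkeeping of the scalar version: one must correctly reflect the contributions of $\lambda_j$ with $j>a$ past $p-1$ and then invoke the reflection identity so that the middle block $\sum_{j=a-i}^{a-1}a_j^{(d-1)}$ cancels exactly, leaving the clean telescoping recursion.
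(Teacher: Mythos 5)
Your proof of the vector identity is correct and runs on the same ingredients as the paper's own proof --- Proposition \ref{pr: gamma_formula}, Lemma \ref{lm: product_gamma_lambda}, and the reflection Lemma \ref{lm: gamma_power_reflexions_sums_delta_p_power} --- just organized more directly. The paper applies the reflection lemma twice, writing
\begin{gather*}
\gamma^d - p^{d-1}\delta_p = p^{d-1}\delta_p-\gamma^d\lambda_{p-1} = p^{d-1}\delta_p-\gamma\bigl(2p^{d-2}\delta_p-\gamma^{d-1}\bigr),
\end{gather*}
then expands $\gamma$ over the $\lambda$-basis and uses $(-1)^r\lambda_r\delta_p = \delta_p$; at the end it still has to invoke the very collapse $(-1)^a\lambda_a\gamma^{d-1} = p^{d-2}\delta_p$ that you isolate up front, and your derivation of it (the $k=a$ case of Lemma \ref{lm: product_gamma_lambda} plus the absorbing identity $\delta_p\gamma^{j} = p^{j}\delta_p$, which indeed falls out of Lemma \ref{lm: gamma_power_reflexions_sums_delta_p_power}) is exactly right. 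So for the first display the two arguments are essentially the same, yours being slightly cleaner.

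Where you genuinely add something is the coordinate form, which the paper dismisses with the word ``equivalently''. Your term-by-term expansion of $\gamma^d=\sum_j(-1)^ja_j^{(d-1)}\gamma\lambda_j$, the reflection of the indices $j>a$ through $\lambda_{p-1}$, and the cancellation of the middle block via $a_{a-l}^{(d-1)}+a_{a+l}^{(d-1)}=2p^{d-2}$ are all correct. But be aware that the recursion you obtain,
\begin{gather*}
a_{a-k}^{(d)}-p^{d-1} = 2\sum_{j=0}^{k-1}\bigl(a_j^{(d-1)}-p^{d-2}\bigr),
\end{gather*}
is \emph{not} the second display of the lemma, which after reindexing reads $2\sum_{j=1}^{k}\bigl(a_j^{(d-1)}-p^{d-2}\bigr)$; the two differ by $a_0^{(d-1)}-a_k^{(d-1)}$, which is nonzero in general. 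Your version is the correct one. For $p=5$, $d=2$ one has $\gamma=2\lambda_0-2\lambda_1+\lambda_2$ and $\gamma^2=9\lambda_0-7\lambda_1+5\lambda_2-3\lambda_3+\lambda_4$, so at $k=a=2$ the left side is $9-5=4$, your right side is $2[(2-1)+(2-1)]=4$, while the paper's display gives $2[(1-1)+(2-1)]=2$. In other words, the lemma's coordinate form carries an index typo (as does the statement of Lemma \ref{lm: gamma_power_reflexions_sums_delta_p_power}, where $a_{p-i}^{(d)}$ should be $a_{p-1-i}^{(d)}$ --- you implicitly used the corrected form). You should flag this explicitly rather than present your formula as the stated recursion; with your corrected indexing the monotonicity argument of Proposition \ref{pr: decreasing_sequence} still goes through, with the difference $a_{a-k}^{(d)}-a_{a-k-1}^{(d)} = -2\bigl(a_k^{(d-1)}-p^{d-2}\bigr)\leq 0$.
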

\begin{proof}
Lemma \ref{lm: gamma_power_reflexions_sums_delta_p_power} applied twice yields
\begin{align*}
    \gamma^d - p^{d-1}\delta_p &= p^{d-1}\delta_p-\gamma^d\lambda_{p-1} = p^{d-1}\delta_p-\gamma(2p^{d-2}\delta_p-\gamma^{d-1})\\
     &= p^{d-1}\delta_p + 2\sum_{r=0}^{a-1}(-1)^r\lambda_r(\gamma^{d-1}-2p^{d-2}\delta_p) + (-1)^a\lambda_a(\gamma^{d-1}-2p^{d-2}\delta_p),
\intertext{and from Lemma \ref{lm: structure_Lambda_1} one can check that $(-1)^r\lambda_r\delta_p = \delta_p$, which yields}
\gamma^d - p^{d-1}\delta_p &= p^{d-1}\delta_p + 2\sum_{r=0}^{a-1}((-1)^r\lambda_r\gamma^{d-1}-2p^{d-2}\delta_p) + ((-1)^a\lambda_a\gamma^{d-1}-2p^{d-2}\delta_p)\\
    &= p^{d-1}\delta_p - (p-1)p^{d-2}\delta_p + 2\sum_{r=0}^{a-1}((-1)^r\lambda_r\gamma^{d-1}-p^{d-2}\delta_p) + ((-1)^a\lambda_a\gamma^{d-1}-2p^{d-2}\delta_p).
\end{align*}
To finish, note that $p^{d-1}\delta_p - (p-1)p^{d-2}\delta_p = p^{d-2}\delta_p,$ and adding the term in the rightmost side of the formula above,
\begin{gather*}
    p^{d-2}\delta_p + (-1)^a\lambda_a\gamma^{d-1}-2p^{d-2}\delta_p = (-1)^a\lambda_a\gamma^{d-1}-p^{d-2}\delta_p.
\end{gather*}
It follows from Lemma \ref{lm: gamma_power_reflexions_sums_delta_p_power} and the remark above that this is zero, so the result follows. 
\end{proof}
\begin{remark}
    Observe that, for $d\geq 2$, $2\sum_{r=0}^{a-1}((-1)^r\gamma^{d-1}\lambda_r - p^{d-2}\delta_p) = 2(\gamma^{d-1}-p^{d-2}\delta_p)\delta_a.$ Applying the result recursively, one obtains that $\gamma^d-p^{d-1}\delta_p = 2^{d-1}(\gamma-\delta_p)\delta_a^{d-1}$, for $d\geq 1$, which is an interesting identity on its own.
\end{remark}

From this lemma, we can now prove the result we were aiming for. We prove something slightly stronger, which is that the sequence is always strictly increasing for $d\geq 1$.
\begin{proposition}
\label{pr: decreasing_sequence}
    Fix $d\geq 1$. Then, the sequence $a_i^{(d)}$ is decreasing on $0\leq i \leq a$. It is strictly decreasing for $d\geq 2$.
\end{proposition}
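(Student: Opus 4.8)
The plan is to prove the claim by induction on $d$, using the recursive formula of Lemma \ref{lm: gamma_power_recursive_formula} as the engine. The base case $d=1$ is immediate from Proposition \ref{pr: gamma_formula}: there we have $a_i^{(1)} = 2$ for $0\leq i \leq a-1$ and $a_a^{(1)} = 1$, which is (weakly) decreasing but not strictly so, which matches the statement that strictness is only claimed for $d\geq 2$. So the content is entirely in the inductive step, and I would phrase the induction hypothesis as: the sequence $a_i^{(d-1)}$ is decreasing on $0\leq i\leq a$ (strictly for $d-1\geq 2$).

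The key observation is to rewrite the recursive identity so that consecutive differences become visible. Lemma \ref{lm: gamma_power_recursive_formula} states that for $k=1,\dots,a$,
\begin{gather*}
a_{a-k}^{(d)} - p^{d-1} = 2\sum_{r=a-k}^{a-1}\left(a_{a-r}^{(d-1)} - p^{d-2}\right).
\end{gather*}
The right-hand side is a sum over a telescoping index range, so subtracting the formula for $k$ from the formula for $k+1$ should isolate a single summand. Concretely, I would compute the difference $a_{a-k-1}^{(d)} - a_{a-k}^{(d)}$ (the gap between two adjacent coefficients of $\gamma^d$) and find that it equals $2\left(a_{k+1}^{(d-1)} - p^{d-2}\right)$, since the extra term in passing from $k$ to $k+1$ corresponds to $r = a-k-1$, giving summand $a_{k+1}^{(d-1)} - p^{d-2}$. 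Thus to prove that $a_i^{(d)}$ is decreasing it suffices to show that each such gap is nonnegative, i.e.~that $a_{k+1}^{(d-1)} \geq p^{d-2}$ for the relevant range of $k$, and strictly positive for strictness.

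The main obstacle — and the crux of the argument — is therefore establishing the lower bound $a_j^{(d-1)} \geq p^{d-2}$ for $j$ in the range $1\leq j \leq a$ (strict for $d-1\geq 2$). This is exactly where I would invoke Lemma \ref{lm: gamma_power_reflexions_sums_delta_p_power}, which gives the reflection identity $a_j^{(d-1)} + a_{p-j}^{(d-1)} = 2p^{d-2}$ and in particular pins down $a_a^{(d-1)} = p^{d-2}$. Combining this symmetry with the induction hypothesis that $a_i^{(d-1)}$ is decreasing on $0\leq i\leq a$ yields $a_j^{(d-1)} \geq a_a^{(d-1)} = p^{d-2}$ for all $j\leq a$, and strict inequality whenever $j<a$ and $d-1\geq 2$. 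Feeding this back into the gap computation closes the induction: all gaps are nonnegative, and for $d\geq 2$ the gaps indexed by $j<a$ are strictly positive, so $a_i^{(d)}$ is strictly decreasing. The only point requiring care is to track the index ranges so that the strictness is not accidentally lost at the last step $i=a-1\to a$; here one checks directly against $a_a^{(d)}=p^{d-1}$ using the reflection identity applied at level $d$.
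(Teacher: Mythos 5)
Your strategy is the same as the paper's: induct on $d$, telescope the coefficient form of Lemma \ref{lm: gamma_power_recursive_formula} to express consecutive gaps of $a_i^{(d)}$ in terms of the level-$(d-1)$ excesses over $p^{d-2}$, and close using $a_a^{(d-1)} = p^{d-2}$ from Lemma \ref{lm: gamma_power_reflexions_sums_delta_p_power}. The weak-decrease half of your argument is fine, but the strictness half has a genuine problem, and it is exactly the index you telescope out. The gap formula you derive (faithfully, from the lemma as stated),
\begin{gather*}
a_{a-k-1}^{(d)} - a_{a-k}^{(d)} = 2\bigl(a_{k+1}^{(d-1)} - p^{d-2}\bigr), \qquad k = 0,\dots,a-1,
\end{gather*}
cannot be correct: at $k = a-1$ it forces $a_0^{(d)} - a_1^{(d)} = 2(a_a^{(d-1)} - p^{d-2}) = 0$, flatly contradicting the strict decrease being proved, and contradicting examples — for $p=5$ one computes $\gamma^2 = 9\lambda_0 - 7\lambda_1 + 5\lambda_2 - 3\lambda_3 + \lambda_4$, so $a_0^{(2)} - a_1^{(2)} = 2 \neq 0$. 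The culprit is the ``equivalent'' coefficient form of Lemma \ref{lm: gamma_power_recursive_formula}, which is off by one: its first displayed identity is correct, and extracting coefficients from it carefully gives instead $a_{a-k-1}^{(d)} - a_{a-k}^{(d)} = 2(a_{k}^{(d-1)} - p^{d-2})$ for $k=0,\dots,a-1$, so that only indices $k \leq a-1$ ever appear. Your closing remark — that the delicate point is the step $i = a-1 \to a$ and that it is fixed by the reflection identity at level $d$ — misdiagnoses this: under your formula the dead gap sits at $i = 0 \to 1$, and knowing $a_a^{(d)} = p^{d-1}$ says nothing about it. (For what it is worth, the paper's own proof commits the same off-by-one, plus a sign slip, so you have reproduced its flaw blind.)

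With the corrected index the argument closes essentially as you envisioned: every gap of $a^{(d)}$ equals $2(a_k^{(d-1)} - p^{d-2})$ with $k \leq a-1$, and these are nonnegative by induction together with $a_a^{(d-1)} = p^{d-2}$. For strictness one needs $a_k^{(d-1)} > p^{d-2}$ for all $k \leq a-1$; when $d = 2$ this must be read off directly from Proposition \ref{pr: gamma_formula} (namely $a_k^{(1)} = 2 > 1$ on that range — it does not follow from monotonicity, since the level-$1$ sequence is only weakly decreasing), and for $d \geq 3$ it follows from the inductive strict decrease plus $a_a^{(d-1)} = p^{d-2}$. So: same route as the paper, but as written your proof establishes only the weak decrease, and the proposed endpoint repair does not rescue the strict one.
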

\begin{proof}
    Let $k = 0,\dots,a-1$. Then, by Lemma \ref{lm: gamma_power_recursive_formula},
    \begin{multline*}
        a_{a-k}^{(d)} - a_{a-k-1}^{(d)} = 2\sum_{r=a-k}^a (a_{a-r}^{(d-1)}-p^{d-2}) - 2\sum_{r=a-k-1}^a(a_{a-r}^{(d-1)}-p^{d-2}) = 2(a_{k+1}^{(d-1)}-p^{d-2}).
    \end{multline*}
    Since $p^{d-2} = a_{a}^{(d-1)}$, the result follows from Proposition \ref{pr: gamma_formula} and induction on $d$.
\end{proof}
From this result, we obtain two useful and interesting inequalities.
\begin{proposition}
    \label{pr: first_inequality}
        For $d\geq 2$ and $p>3$ prime, $\HK_1(S_{p,d})-\HK_1(R_{p,d})\geq \HK_1(R_{p,d-1})-p^{d-1}$.
    \end{proposition}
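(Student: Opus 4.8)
The plan is to translate all three Hilbert–Kunz numbers into values of the functional $\alpha$ on monomials in $\gamma$ and $\eta$ in the Han–Monsky ring, and then to reduce the claimed inequality to a purely combinatorial comparison that is handled by the monotonicity in Proposition \ref{pr: decreasing_sequence}. Since $p>3$, the rings $R_{p,d}$ and $S_{p,d}$ are diagonal hypersurfaces, so by the tensor decomposition of Example \ref{ex: example_diagonal_hypersurface_k_object} together with Lemma \ref{lm: lemmarepring} (applied at $e=1$, $q=p$) the relevant $k$-object $\mu_1$ is $\gamma^{d+1}$ for $R_{p,d}$, it is $\gamma^d\eta$ for $S_{p,d}$, and it is $\gamma^d$ for $R_{p,d-1}$. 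Hence $\hr = \alpha(\gamma^{d+1})$, $\hs = \alpha(\gamma^d\eta)$, and $\HK_1(R_{p,d-1}) = \alpha(\gamma^d) = a_0^{(d)}$, the last equality because $\alpha(\lambda_0)=1$ and $\alpha(\lambda_i)=0$ for $i\geq 1$.

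First I would feed these into Lemma \ref{lm: alpha_of_gamma_power_eta_and_gamma_power}, writing $p=2a+1=3b+c$ with $c\in\{1,2\}$, to obtain
\begin{align*}
    \hs &= 3\bigl(a_0^{(d)}+\dots+a_{b-1}^{(d)}\bigr)+c\,a_b^{(d)}, \\
    \hr &= 2\bigl(a_0^{(d)}+\dots+a_{a-1}^{(d)}\bigr)+a_a^{(d)}.
\end{align*}
Subtracting and using $a_a^{(d)}=p^{d-1}$ from Lemma \ref{lm: gamma_power_reflexions_sums_delta_p_power}, the term $-p^{d-1}$ cancels against the $-p^{d-1}$ on the right-hand side of the claimed inequality; then splitting $\sum_{i=0}^{a-1}=\sum_{i=0}^{b-1}+\sum_{i=b}^{a-1}$ and cancelling one copy of $a_0^{(d)}$ shows that the statement is equivalent to
\begin{gather*}
    \sum_{i=1}^{b-1} a_i^{(d)} + c\,a_b^{(d)} \;\geq\; 2\sum_{i=b}^{a-1} a_i^{(d)}.
\end{gather*}

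The key observation is that both sides carry the same number of summands counted with multiplicity: the left has $(b-1)+c$ terms and the right has $2(a-b)$, and from $2a+1=3b+c$ one computes $2(a-b)=b+c-1=(b-1)+c$. I would then compare the two sides term by term after sorting the index multisets. In increasing order the left-hand indices are $1,2,\dots,b-1,b,\dots,b$ (with $b$ repeated $c$ times), while the right-hand indices are $b,b,b+1,b+1,\dots,a-1,a-1$. Position by position the left index never exceeds the right index: in the first $b-1$ slots the left index is at most $b-1<b$, and in the remaining slots the left index equals $b$ while the right index is $\geq b$. Since all indices involved lie in $[0,a]$ and $a_i^{(d)}$ is decreasing there by Proposition \ref{pr: decreasing_sequence}, each left summand dominates the right summand it is paired with, and summing over the pairing yields the inequality.

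I expect the only real content to be this middle reduction and the counting identity $2(a-b)=(b-1)+c$: once the $p^{d-1}$ terms are seen to cancel and the two sides are recognised to have equally many summands, the monotonicity of $a_i^{(d)}$ closes the argument at once, with the two residue cases $c=1$ and $c=2$ needing no separate treatment. The point to be careful about is the bookkeeping of multiplicities — the factor $3$ versus $2$ coming from $\eta$ versus $\gamma$, and the leftover weight $c$ on $a_b^{(d)}$ — so that the sorted pairing is set up correctly.
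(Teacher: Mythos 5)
Your proposal is correct and takes essentially the same approach as the paper: both translate the three Hilbert--Kunz values into $\alpha(\gamma^d\eta)$, $\alpha(\gamma^{d+1})$, and $a_0^{(d)}$ via Lemma \ref{lm: alpha_of_gamma_power_eta_and_gamma_power}, use $a_a^{(d)}=p^{d-1}$ from Lemma \ref{lm: gamma_power_reflexions_sums_delta_p_power}, and conclude by a termwise comparison resting on the monotonicity of Proposition \ref{pr: decreasing_sequence}. The only difference is bookkeeping: the paper indexes both sums uniformly over $k=1,\dots,p$ with floor functions and telescopes, whereas you cancel the common terms first and then pair the sorted index multisets, which is an equivalent (and arguably more transparent) organization of the same estimate.
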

    \begin{proof}
        Let $a,b,c$ like in \ref{lm: alpha_of_gamma_power_eta_and_gamma_power}. Since $d$ is fixed throughout the proof, there is no ambiguity in denoting $\gamma^d = \sum_{i = 0}^{p-1} a_i\lambda_i$. 
        By Lemma \ref{lm: alpha_of_gamma_power_eta_and_gamma_power}, rearranging the coefficients we obtain:
        \begin{gather*}
            \hs = \alpha(\gamma^d\eta) = \sum_{k=1}^{p}a_{\left\lfloor\frac{k+2}{3}\right\rfloor-1} \qandq \hr = \alpha(\gamma^{d+1}) = \sum_{k=1}^{p}a_{\left\lfloor \frac{k+1}{2}\right\rfloor-1}.
        \end{gather*}
    
        Thus,
            \begin{gather*}
                \hs-\hr = \sum_{k=1}^{p}\left(a_{\left\lfloor\frac{k+2}{3}\right\rfloor-1}-a_{\left\lfloor \frac{k+1}{2}\right\rfloor-1}\right).
            \end{gather*}
            Now, it happens that $\lfloor\frac{k+1}{2}\rfloor\geq\lfloor\frac{k+2}{3}\rfloor$, and the equality only holds for $k=1,2$ or $4$. This implies that $\lfloor\frac{k+1}{2}\rfloor>\lfloor\frac{k+2}{3}\rfloor$ for $k=3$ and $p\geq k\geq 5$. Hence, by Proposition \ref{pr: decreasing_sequence}  
            \begin{gather*}
                a_{\left\lfloor\frac{k+2}{3}\right\rfloor-1}-a_{\left\lfloor \frac{k+1}{2}\right\rfloor-1} \neq 0
            \end{gather*}
            for $k=3$ and $p\geq k\geq 5$. So,
            \begin{gather*}
                \sum_{k=1}^{p}\left(a_{\left\lfloor\frac{k+2}{3}\right\rfloor-1}-a_{\left\lfloor \frac{k+1}{2}\right\rfloor-1}\right) = (a_0-a_1) + \sum_{k=5}^p \left(a_{\left\lfloor\frac{k+2}{3}\right\rfloor-1}-a_{\left\lfloor \frac{k+1}{2}\right\rfloor-1}\right).
            \end{gather*}
            where every term of the sum on the right side is non-zero. Note that
            \begin{gather*}
                a_{\left\lfloor\frac{k+2}{3}\right\rfloor-1}-a_{\left\lfloor \frac{k+1}{2}\right\rfloor-1}\geq a_{\left\lfloor\frac{k+1}{2}\right\rfloor-2}-a_{\left\lfloor \frac{k+1}{2}\right\rfloor-1}
            \end{gather*}
            therefore
            \begin{align*}
                \sum_{k=5}^p \left(a_{\left\lfloor\frac{k+2}{3}\right\rfloor-1}-a_{\left\lfloor \frac{k+1}{2}\right\rfloor-1}\right) &\geq  \sum_{k=5}^p \left(a_{\left\lfloor\frac{k+1}{2}\right\rfloor-2}-a_{\left\lfloor \frac{k+1}{2}\right\rfloor-1}\right) \\
                &= 2(a_1-a_2)+2(a_2-a_3)+\dots+2(a_{a-2}-a_{a-1}) + (a_{a-1}-a_a)\\
                &\geq (a_1-a_2)+(a_2-a_3)+\dots+(a_{a-1}-a_a) = a_1-a_a
            \end{align*}
            Thus, to sum up, we just proved that
                    \begin{align*}
                        \hs-\hr &\geq (a_0-a_1)+(a_{1}-a_a) = a_0-a_a = \HK_1(R_{p,d-1})-p^{d-1}.
                    \end{align*}
    \end{proof}
    
    \begin{proposition}
    \label{pr: second_inequality}
        For $d\geq 2$ and $p>3$ prime, $p\HK_1(R_{p,d-1})\geq\HK_1(S_{p,d})$.
    \end{proposition}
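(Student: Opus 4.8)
The plan is to reuse the Han–Monsky dictionary already set up for Proposition \ref{pr: first_inequality}, so that both sides of the desired inequality become explicit integer combinations of the $\lambda$-basis coordinates $a_i = a_i^{(d)}$ of $\gamma^d$, and then to bound the right-hand side termwise using the monotonicity established earlier.

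First I would record the two translations. By Lemma \ref{lm: lemmarepring} applied with $q=p$, the $A_1$ singularity $R_{p,d-1}$ — a quadric in the $d$ variables $x_0,\dots,x_{d-1}$ — corresponds to $\gamma^d$ in $\Gamma'$, so that $\HK_1(R_{p,d-1}) = \alpha(\gamma^d)$. Writing $\gamma^d = \sum_{i=0}^{p-1}(-1)^i a_i\lambda_i$ and using $\alpha(\lambda_0)=1$, $\alpha(\lambda_i)=0$ for $i\geq 1$, this gives $\HK_1(R_{p,d-1}) = a_0$ — exactly the quantity that already surfaced at the end of the proof of Proposition \ref{pr: first_inequality}. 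On the other side, $\HK_1(S_{p,d}) = \alpha(\gamma^d\eta)$, which Lemma \ref{lm: alpha_of_gamma_power_eta_and_gamma_power} expands as $3a_0 + \dots + 3a_{b-1} + c\,a_b$, where $p = 3b+c$ with $0\leq c<3$.

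With these expressions in hand, the claim $p\HK_1(R_{p,d-1}) \geq \HK_1(S_{p,d})$ becomes precisely $p\,a_0 \geq 3\sum_{i=0}^{b-1} a_i + c\,a_b$. The key observation is that the total weight of the coefficients on the right is $3b+c = p$, matching the factor $p$ on the left. Since Proposition \ref{pr: decreasing_sequence} gives $a_0 \geq a_1 \geq \dots \geq a_a$, each coordinate $a_i$ with $0\leq i\leq b$ satisfies $a_i\leq a_0$ (note $b=\lfloor p/3\rfloor \leq \lfloor p/2\rfloor = a$, so all these indices lie in the decreasing range). Replacing every $a_i$ by $a_0$ only increases the right-hand side, yielding
\begin{gather*}
3\sum_{i=0}^{b-1} a_i + c\,a_b \leq 3b\,a_0 + c\,a_0 = (3b+c)\,a_0 = p\,a_0,
\end{gather*}
which is exactly the inequality sought.

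I do not anticipate a genuine obstacle: once the translation into the $a_i$ is performed, the statement collapses to a one-line application of monotonicity together with the identity $p=3b+c$. The only point requiring a moment's care is confirming that $b\leq a$ so that Proposition \ref{pr: decreasing_sequence} covers all indices $0,\dots,b$, which is immediate for $p>3$.
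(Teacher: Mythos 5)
Your proposal is correct and follows essentially the same route as the paper: both translate $\HK_1(S_{p,d})$ into the coordinates $a_i$ of $\gamma^d$ via Lemma \ref{lm: alpha_of_gamma_power_eta_and_gamma_power}, bound each $a_i$ by $a_0 = \HK_1(R_{p,d-1})$ using the monotonicity of Proposition \ref{pr: decreasing_sequence}, and conclude with the weight count $3b+c=p$. Your additional checks (that $\alpha(\gamma^d)=a_0$ identifies the left-hand side, and that $b\leq a$ so all indices lie in the monotone range) are points the paper leaves implicit, but they are the same argument spelled out.
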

    \begin{proof}
        We use the same notation in \ref{pr: first_inequality}. Then, by \ref{pr: decreasing_sequence},
        \begin{equation*}
            \HK_1(S_{p,d}) = 3a_{0}+\dots+3a_{b-1}+ca_{b} \leq 3a_{0}+\dots+3a_{0}+ca_{0} = pa_0 = p\HK_1(R_{p,d-1}).
        \end{equation*}
    \end{proof}

\subsection{The main inequality, \texorpdfstring{$p>3$}{p ge 3}}\label{subsec: main_inequality} We finally conclude in this section that the Hilbert–Kunz multiplicity of the $A_1$ and $A_2$ singularities are different. The following lemma lets us simplify our computation considerably by not allowing us to disregard $\ell^\sharp(R_{p,d})$ and $\ell^\sharp(S_{p,d})$:
\begin{lemma}[Theorem 4.6, \cite{hanmonsky}]
    \label{lm: ell_bound}
    If $d\geq 2$, $q= p^e$ and $k_i\leq q-1$, then $\alpha(\prod_{i = 0}^{d} \lambda_{k_i})\leq q^{d-2}$. In particular, $\ell^\sharp(S_{p,d}) \leq p^{d-2}$.
\end{lemma}

\begin{theorem}
    \label{cr: A_2_A_1_ineq_large_char}
    For $p> 3$ prime and $d\geq 2$, $\eHK{S_{p,d}}>\eHK{R_{p,d}}$.
\end{theorem}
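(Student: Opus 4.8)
The plan is to feed the three preceding results into the closed formulas of Theorem \ref{th: yoshida_theorem_mu_1}. Writing $H_R=\hr$, $H_S=\hs$, $\ell_R=\ell^\sharp(R_{p,d})$, and $\ell_S=\ell^\sharp(S_{p,d})$, these formulas read
\[
\eHK{S_{p,d}}-1=\frac{H_S-p^d}{p^d-\ell_S},\qquad \eHK{R_{p,d}}-1=\frac{H_R-p^d}{p^d-\ell_R}.
\]
First I would pin down all the signs. Since $R_{p,d}$ and $S_{p,d}$ are non-regular of dimension $d$, Kunz's theorem gives $H_R,H_S>p^d$; since the structure constants of $\Lambda_1$ in Theorem \ref{lm: structure_Lambda_1} are non-negative, the products $\lambda_a^{d+1}$ and $\lambda_a^d\lambda_b$ expand with non-negative coefficients, so their $\lambda_0$-coefficients $\ell_R,\ell_S$ (which is what $\alpha$ reads off) are $\ge 0$. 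Finally Theorem \ref{cor: ell_bound}, applied to both products of $d+1$ factors, gives $\ell_R,\ell_S\le p^{d-2}<p^d$. Thus both fractions are positive and both denominators lie in the narrow band $[\,p^d-p^{d-2},\,p^d\,]$.

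The crux is that this band is narrow enough that we may replace each denominator by its extreme value without losing the comparison. Using $\ell_S\ge 0$ on the left and $\ell_R\le p^{d-2}$ on the right,
\[
\eHK{S_{p,d}}-1\ \ge\ \frac{H_S-p^d}{p^d}\qandq \eHK{R_{p,d}}-1\ \le\ \frac{H_R-p^d}{p^d-p^{d-2}}.
\]
It therefore suffices to prove the strict inequality between these two outer bounds. Clearing the (positive) denominators and dividing by $p^{d-2}$, this is equivalent to the single inequality
\[
p^2\,(H_S-H_R)\ >\ H_S-p^d.
\]

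Now the two numerator estimates take over. Proposition \ref{pr: first_inequality} gives $H_S-H_R\ge \HK_1(R_{p,d-1})-p^{d-1}$ and Proposition \ref{pr: second_inequality} gives $H_S\le p\,\HK_1(R_{p,d-1})$; setting $H'=\HK_1(R_{p,d-1})$ and substituting, the target reduces to $p^2(H'-p^{d-1})>pH'-p^{d}$, which simplifies, after dividing by $p-1>0$, to the single requirement $H'>p^{d-1}$. This is exactly where strictness is produced: in the notation of the previous subsection $H'=a_0^{(d)}$ while $p^{d-1}=a_a^{(d)}$ by Lemma \ref{lm: gamma_power_reflexions_sums_delta_p_power}, so the strict monotonicity of Proposition \ref{pr: decreasing_sequence} (valid for $d\ge2$) gives $a_0^{(d)}>a_a^{(d)}$, i.e.\ $H'>p^{d-1}$. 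Chaining the weak outer bounds through this one strict step yields $\eHK{S_{p,d}}>\eHK{R_{p,d}}$.

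The only place the argument could have derailed is the interplay of the two \emph{different} denominators $p^d-\ell_R$ and $p^d-\ell_S$, which a priori could tilt the comparison the wrong way. The resolution, and the main conceptual point, is that Theorem \ref{cor: ell_bound} pins both denominators to within a factor $(1-p^{-2})$ of $p^d$, so the generous numerator gap furnished by Propositions \ref{pr: first_inequality} and \ref{pr: second_inequality} comfortably dominates; everything after the reduction to $H'>p^{d-1}$ is elementary algebra.
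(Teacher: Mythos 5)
Your proposal is correct and follows essentially the same route as the paper: both use Theorem \ref{th: yoshida_theorem_mu_1} with the bounds $\ell^\sharp(S_{p,d})\geq 0$ and $\ell^\sharp(R_{p,d})\leq p^{d-2}$ from Theorem \ref{cor: ell_bound} to reduce the comparison to $p^2\left(\HK_1(S_{p,d})-\HK_1(R_{p,d})\right)>\HK_1(S_{p,d})-p^d$, and then feed in Propositions \ref{pr: first_inequality} and \ref{pr: second_inequality}. The only (harmless) difference is bookkeeping: the paper chains $p^2(H'-p^{d-1})>p(H'-p^{d-1})\geq \HK_1(S_{p,d})-p^d$, where strictness comes from $H'-p^{d-1}>0$, which is exactly the fact $H'>p^{d-1}$ that you isolate and justify via Lemma \ref{lm: gamma_power_reflexions_sums_delta_p_power} and Proposition \ref{pr: decreasing_sequence}.
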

\begin{proof}
    By Corollary \ref{cor: yoshida_theorem_mu_1} and Lemma \ref{lm: ell_bound},
\begin{gather*}
    \eHK{S_{p,d}} = \frac{\HK_1(S_{p,d})-p^d}{p^d-\ell^\sharp(S_{p,d})}\geq \frac{\HK_1(S_{p,d})-p^d}{p^d},\\
    \eHK{R_{p,d}}=\frac{\HK_1(R_{p,d})-p^d}{p^d-\ell^\sharp(R_{p,d})}\leq \frac{\HK_1(R_{p,d})-p^d}{p^d-p^{d-2}},
\end{gather*}
From Propositions \ref{pr: first_inequality} and \ref{pr: second_inequality}, it follows that
    \begin{multline}
        \label{fr: final_inequality_large_char}
        p^2(\HK_1(S_{p,d})-\HK_1(R_{p,d}))\geq p^2(\HK_1(R_{p,d-1})-p^{d-1}) >\\
        > p(\HK_1(R_{p,d-1})-p^{d-1})\geq \HK_1(S_{p,d})-p^d,
    \end{multline}
which implies that
\begin{gather*}
    \frac{\HK_1(S_{p,d})-p^d}{p^d}>\frac{\HK_1(R_{p,d})-p^d}{p^d-p^{d-2}}.
\end{gather*}
\end{proof}

\section{On the Hilbert–Kunz function of the \texorpdfstring{$A_1$}{A1} and \texorpdfstring{$A_2$}{A2} singularities, \texorpdfstring{$p=2,3$}{p23}}\label{sec:HKmultiplicity_A1_and_A2_char_2_3}
In this section, we will first explicitly compute the Hilbert–Kunz multiplicity of the $A_1$ and $A_2$ singularities in characteristics 2 and 3, in sections §\ref{subsec: A1_char_2}, §\ref{subsec: A2_char_2} and §\ref{subsec:A2_in_char_3}, respectively. In the last part, in §\ref{subsec: main_inequality_char_2_and_3}, we show that $\eHK{S_{2,d}}>\eHK{R_{2,d}}$ for $p=2,3$, $d\geq 2$.

\subsection{The \texorpdfstring{$A_1$}{A1} singularity in characteristic 2}\label{subsec: A1_char_2}
The theory on the arithmetic of the Han–Monsky ring in characteristic 2 developed in Section \ref{sec:han_monsky_ring} provides a powerful way of computing the Hilbert–Kunz function of the $A_1$ singularity thanks to the fact that it's an orthogonal basis of the $\Lambda_e\otimes_\mathbb{Z} \mathbb{Q}$. When $d$ is even, Yoshida \cite{yos} computed $\eHK{R_{2,d}}$ and conjectured the formula in the case $d$ is odd. The goal of this section is to compute $\eHK{R_{2,d}}$ for every $d$. First, a lemma about an object of the Han–Monsky ring:
\begin{lemma}
    \label{lm: M_e_over_delta_basis}
Let $k$ be a field of characteristic 2. For $e\geq 1$, set $\mu_e := \left(k[x,y]/(x^{2^e},y^{2^e}),xy\right)$. Then $\mu_e = \delta_{2^e}+2\delta_{1}+\dots+2\delta_{2^e-1}.$
\end{lemma}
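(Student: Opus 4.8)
The plan is to read off the Jordan decomposition of $\mu_e$ directly from the action of $T=xy$ on the monomial basis, and then to collect the resulting cyclic blocks into the $\delta$-basis. Recall that by Corollary \ref{cor: delta_basis_is_a_basis} every $k$-object is, as a $k[T]$-module, a direct sum of blocks $\delta_n = k[T]/(T^n)$; since $T$ is nilpotent this is exactly the Jordan normal form of the operator $T$. So it suffices to count, for each $n$, how many Jordan chains of length $n$ appear.

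First I would set $q=2^e$ and note that $T=xy$ sends the basis monomial $x^iy^j$ (with $0\le i,j\le q-1$) to $x^{i+1}y^{j+1}$, which is nonzero precisely when $i+1\le q-1$ and $j+1\le q-1$. Consequently each monomial $x^iy^j$ sits in a unique chain obtained by repeatedly dividing by $xy$ until one exponent reaches $0$; the bottom of every chain is therefore a monomial $x^iy^j$ with $\min(i,j)=0$, and these boundary monomials index the Jordan blocks bijectively. For a bottom monomial $y^j$ (the case $i=0$) the chain is $y^j,xy^{j+1},\dots,x^{q-1-j}y^{q-1}$, which has length $q-j$; symmetrically, for a bottom monomial $x^i$ with $i>0$ the chain has length $q-i$.

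Next I would tally the lengths. Letting $j$ run over $0,\dots,q-1$ gives exactly one chain of each length $q-j = q,q-1,\dots,1$, while letting $i$ run over $1,\dots,q-1$ gives one chain of each length $q-i=q-1,\dots,1$. Hence there is a single block of length $q$ (coming from the unit monomial $i=j=0$) and exactly two blocks of each length $n$ with $1\le n\le q-1$, which yields
\begin{equation*}
\mu_e = \delta_{q} + 2\delta_1 + 2\delta_2 + \dots + 2\delta_{q-1},
\end{equation*}
as claimed. As a consistency check one can compute $\alpha(\mu_e)=1+2(q-1)=2q-1$, matching $\dim_k \mu_e/T\mu_e = q^2-(q-1)^2$, since $T\mu_e$ is spanned by the monomials $x^ay^b$ with $1\le a,b\le q-1$.

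The computation is entirely elementary; the only point requiring care is the bookkeeping that separates the two boundary families $\{x^i\}$ and $\{y^j\}$ without double-counting the corner monomial $1=x^0y^0$, which is what produces the unique unpaired block $\delta_{2^e}$. Everything else is routine chain-length arithmetic, so I do not expect a genuine obstacle here.
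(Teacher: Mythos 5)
Your proof is correct and follows essentially the same approach as the paper: both decompose the monomial basis of $k[x,y]/(x^{2^e},y^{2^e})$ into disjoint $T$-orbits (Jordan chains) rooted at the monomials $x^i$ and $y^j$ with one exponent equal to zero, and count one chain of length $2^e$ from the unit together with two chains of each length $n$ for $1\le n\le 2^e-1$. The dimension check via $\alpha$ is a nice addition but the argument is otherwise the paper's own.
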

\begin{proof}
    Given that the $T$-action is given by product by a monomial, it is easy to see that the orbits of the monomials $1,x,...,x^{2^n-1},y,...,y^{2^n-1}$ under the action of $T$ produce disjoint $k$-vector subspaces that span the $k$-object – more specifically, the orbits are the following:
    \begin{equation*} 
    \begin{array}{rllllll}
        1 &\mapsto xy &\mapsto \dots &\mapsto x^{2^e-1}y^{2^e-1} &\mapsto 0,\\[.5em]
        x^i &\mapsto x^{i+1}y &\mapsto \dots &\mapsto x^{2^e-1}y^{2^e-i-1}&\mapsto 0,\\[.5em]
        y^j &\mapsto xy^{j+1} &\mapsto \dots &\mapsto x^{2^e-j-1}y^{2^e-1}&\mapsto 0.
    \end{array}
    \end{equation*}
    According to this decomposition, one sees that $x^i$ (or $y^i$) generates a cyclic $k[T]$-submodule whose isomorphism class is $\delta_{2^e-i}$.
    \end{proof}
\begin{proposition}
\label{pr: A_1_in_representation_ring}
    Let $k$ be a field of characteristic 2. Then,
    \begin{gather*}
        \left(\frac{k[[x_0,\dots,x_d]]}{(x_0^{[2^e]},\dots,x_d^{[2^e]})},Q_d\right) = \begin{cases} \mu_e^m\cdot 2\delta_{2^{e-1}} & (d=2m)\\
        \mu_e^m & (d=2m-1)
        \end{cases}
    \end{gather*}
    where $Q_d$ is as in Definition \ref{df: Qd} and $\mu_e$ as in the previous lemma.
\end{proposition}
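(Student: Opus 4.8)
The plan is to exploit the block-diagonal structure of $Q_d$ together with the multiplicative behavior of the Han–Monsky ring under tensor products of $k$-objects. First I would observe that, since the $x_i$ generate the maximal ideal, the quotient splits as a tensor product of $k$-algebras
\begin{gather*}
\frac{k[[x_0,\dots,x_d]]}{(x_0^{2^e},\dots,x_d^{2^e})} \cong \bigotimes_{j=0}^{d} \frac{k[x_j]}{(x_j^{2^e})},
\end{gather*}
and that $Q_d$ is a sum of monomials supported on pairwise disjoint sets of variables: the pairs $x_{2i-2}x_{2i-1}$ for $1\leq i\leq m$, together with the extra square $x_d^2$ exactly when $d=2m$ is even.

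By Proposition \ref{pr: product_and_sum_in_rep_ring} and the discussion in Example \ref{ex: example_diagonal_hypersurface_k_object}, whenever the $T$-action is a sum of elements living in disjoint tensor factors, the resulting $k$-object is the product in $\Gamma$ of those factors, each equipped with its own summand as $T$-action. Grouping the variables into consecutive pairs, this gives
\begin{gather*}
\left(\frac{k[[x_0,\dots,x_d]]}{(x_0^{2^e},\dots,x_d^{2^e})},\, Q_d\right) = \prod_{i=1}^{m}\left(\frac{k[x_{2i-2},x_{2i-1}]}{(x_{2i-2}^{2^e},x_{2i-1}^{2^e})},\, x_{2i-2}x_{2i-1}\right)\cdot L,
\end{gather*}
where the factor $L = (k[x_d]/(x_d^{2^e}),\, x_d^2)$ is present only in the even case. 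Each pair factor is, by definition, precisely $\mu_e$, so the odd case $d=2m-1$ immediately yields $\mu_e^m$.

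For the even case $d=2m$ it remains to evaluate the leftover factor $L$. Using the decomposition of $k[x]/(x^a)$ with $T$-action $x^c$ into $(c-r)\delta_{\lfloor a/c\rfloor}+r\delta_{\lfloor a/c\rfloor+1}$ recorded in the remark after Corollary \ref{cor: delta_basis_is_a_basis}, with $a=2^e$ and $c=2$ we have $2^e = 2\cdot 2^{e-1}+0$, whence $L = 2\delta_{2^{e-1}}$. Multiplying the $m$ copies of $\mu_e$ by $L$ then produces $\mu_e^m\cdot 2\delta_{2^{e-1}}$, as claimed.

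I do not expect a serious obstacle: the statement is essentially the observation that $Q_d$ is block-diagonal, so the real content is bookkeeping on the tensor factors. The one point deserving genuine care is justifying that the $T$-action on the tensor product decomposes as the sum of the individual monomial actions, which is exactly the Leibniz-type rule in Proposition \ref{pr: product_and_sum_in_rep_ring}. It is worth emphasizing that characteristic $2$ is used in an essential way here: over such a field the cross term $x_0x_1$ cannot be completed into a difference of squares, which is precisely why $\mu_e$, rather than a product of $\delta$'s, is forced to appear as the basic building block — in contrast to the diagonal situation treated in the previous section.
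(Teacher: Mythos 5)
Your proposal is correct and follows essentially the same route as the paper: both split the quotient along the block-diagonal structure of $Q_d$ into tensor factors, identify each pair factor $\left(k[x,y]/(x^{2^e},y^{2^e}),\,xy\right)$ with $\mu_e$, and in the even case evaluate the leftover factor $\left(k[x_d]/(x_d^{2^e}),\,x_d^2\right)$ as $2\delta_{2^{e-1}}$ via the division $2^e = 2\cdot 2^{e-1}$. Your write-up is in fact slightly more explicit than the paper's, which leaves the identification of the last factor implicit.
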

\begin{proof}
    If $d = 2m$, $m\geq 1$, then $Q_d = x_0x_1+\dots+x_{d-2}x_{d-1}+x_d^2$. This implies that the $k$-object in question decomposes as:
    \begin{gather*}
        \frac{k[x,y]}{(x^{[2^e]},y^{[2^e]})}\otimes \stackrel{(m)}\dots \otimes \frac{k[x,y]}{(x^{[2^e]},y^{[2^e]})}\otimes \frac{k[x]}{(x^{2^e})},
    \end{gather*}
    where the $k$-object $\frac{k[x,y]}{(x^{[2^e]},y^{[2^e]})}$ has $T$-action $xy$, the $k$-object $\frac{k[x]}{(x^{[2^e]})}$ has the $T$-action $y^2$.
    
    If $d = 2m-1$, $m\geq 1$, then $Q_d=x_0x_1+\dots+x_{d-3}x_{d-2}+x_{d-1}x_d$ and the $k$-object decomposes as
    \begin{gather*}
    \frac{k[x,y]}{(x^{[2^e]},y^{[2^e]})}\otimes \stackrel{(m)}\dots \otimes \frac{k[x,y]}{(x^{[2^e]},y^{[2^e]})},
    \end{gather*}
    where the $k$-object $\frac{k[x,y]}{(x^{[2^e]},y^{[2^e]})}$ has $T$-action $xy$.
\end{proof}

The aim of the following lemmas is expressing $\mu_e$ over the $\sigma$-basis:

\begin{lemma}
\label{lm: M_e_over_lambda_basis}
For $e\geq 1$, $\mu_e = \sum_{k=0}^{2^e-1} (-1)^{k}(2(2^e-k)-1)\lambda_{k}$.
\end{lemma}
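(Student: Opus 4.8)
The plan is to pass directly from the $\delta$-basis expression for $\mu_e$ furnished by Lemma \ref{lm: M_e_over_delta_basis} to the claimed $\lambda$-basis expression, using the change-of-basis identity $\delta_j = \sum_{k=0}^{j-1}(-1)^k\lambda_k$ recorded in the remark after Definition \ref{def: lambda}. Since both bases are $\mathbb{Z}$-bases of $\Gamma$ (Corollary \ref{cor: delta_basis_is_a_basis}), it suffices to compute the coefficient of each $\lambda_k$ and check it equals $(-1)^k(2(2^e-k)-1)$.

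First I would rewrite Lemma \ref{lm: M_e_over_delta_basis} as
\begin{equation*}
\mu_e = \delta_{2^e} + 2\sum_{j=1}^{2^e-1}\delta_j,
\end{equation*}
and substitute $\delta_j = \sum_{k=0}^{j-1}(-1)^k\lambda_k$. The key observation is that $\lambda_k$ appears in $\delta_j$ (with coefficient $(-1)^k$) precisely when $j \geq k+1$. Fixing $k$ with $0\leq k\leq 2^e-1$, the term $\delta_{2^e}$ always contributes $(-1)^k$, while among the summands $2\delta_j$ with $1\leq j\leq 2^e-1$ exactly those with $k+1\leq j\leq 2^e-1$ contribute, and there are $2^e-1-k$ of them.

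Collecting these contributions gives the coefficient of $\lambda_k$ as
\begin{equation*}
(-1)^k + 2(-1)^k(2^e-1-k) = (-1)^k\bigl(2^{e+1}-2k-1\bigr) = (-1)^k\bigl(2(2^e-k)-1\bigr),
\end{equation*}
which is exactly the asserted coefficient; summing over $0\leq k\leq 2^e-1$ yields the formula. The only obstacle is bookkeeping of the index ranges — in particular verifying that the count $2^e-1-k$ is correct at the boundary cases $k=0$ and $k=2^e-1$ — so I expect no conceptual difficulty, just a careful tally of which $\delta_j$ contribute each $\lambda_k$.
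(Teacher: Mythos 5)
Your proof is correct and follows essentially the same route as the paper: both substitute the $\delta$-to-$\lambda$ change of basis from the remark after Definition \ref{def: lambda} into the expression of Lemma \ref{lm: M_e_over_delta_basis} and collect the coefficient of each $\lambda_k$. Your coefficient count $(-1)^k + 2(-1)^k(2^e-1-k) = (-1)^k(2(2^e-k)-1)$ matches the paper's tally exactly, including the boundary cases.
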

\begin{proof}
We use the change of variables in the remark after Definition \ref{def: lambda} on the expression in Lemma \ref{lm: M_e_over_delta_basis},
\begin{align*}
    2\delta_1+...+2\delta_{2^e-1}+\delta_{2^e}&= 2\lambda_0 +2(\lambda_0-\lambda_1) + \cdots +2(\lambda_0- \dots +\lambda_{2^e-2}) +(\lambda_0-\dots -\lambda_{2^e-1})\\
    &= \left[2\cdot (2^e-1)\lambda_0 -2\cdot (2^e-2)\lambda_1 + ... +2\lambda_{2^e-2}\right] +\left[\lambda_0-\dots - \lambda_{2^e-1}\right]\\
    &=\sum_{k=0}^{2^e-1} (-1)^{k}(2(2^e-k-1)+1)\lambda_{k}.
\end{align*}
\end{proof}

\begin{corollary}
\label{cor: A1_recursion_char_2}
    The expression $\mu_e$ verifies the following recursive formula:
    \begin{gather*}
        \mu_e = (\lambda_0-\lambda_{2^{e-1}})\mu_{e-1} + 2^{2e-1}\sigma_{1,e-1}
    \end{gather*}
    for every $e\geq 2$. 
\end{corollary}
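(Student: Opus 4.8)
The goal is to prove the recursive formula
\[
\mu_e = (\lambda_0 - \lambda_{2^{e-1}})\mu_{e-1} + 2^{2e-1}\sigma_{1,e-1}
\]
for $e \geq 2$, where $\mu_e = \sum_{k=0}^{2^e-1} (-1)^k(2(2^e-k)-1)\lambda_k$ by Lemma \ref{lm: M_e_over_lambda_basis}.

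Let me compute the coefficients of both sides over the $\lambda$-basis.

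The coefficient of $\lambda_k$ in $\mu_e$ is $(-1)^k(2(2^e-k)-1) = (-1)^k(2^{e+1} - 2k - 1)$.

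For $\mu_{e-1} = \sum_{j=0}^{2^{e-1}-1}(-1)^j(2^e - 2j - 1)\lambda_j$, we have coefficients on indices $0$ through $2^{e-1}-1$.

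Now $(\lambda_0 - \lambda_{2^{e-1}})\mu_{e-1}$: the $\lambda_0$ term gives $\mu_{e-1}$ unchanged. For $\lambda_{2^{e-1}}\mu_{e-1}$, I use Lemma \ref{lm: lambda_p_adic_split}\ref{lm: p_adic_split} with $q = 2^{e-1}$: $\lambda_{2^{e-1}}\lambda_j = \lambda_{2^{e-1}+j}$ for $0 \le j \le 2^{e-1}-1$. So
\[
\lambda_{2^{e-1}}\mu_{e-1} = \sum_{j=0}^{2^{e-1}-1}(-1)^j(2^e-2j-1)\lambda_{2^{e-1}+j}.
\]
Write $k = 2^{e-1}+j$, so $j = k - 2^{e-1}$ and $(-1)^j = (-1)^k (-1)^{2^{e-1}} = (-1)^k$ for $e \ge 2$. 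The coefficient of $\lambda_k$ (for $2^{e-1} \le k \le 2^e-1$) is $(-1)^k(2^e - 2(k-2^{e-1})-1) = (-1)^k(2^{e+1}-2k-1)$.

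By Proposition \ref{pr: sigmaproperties}\ref{pr: sigma_1_delta_and_lambda}, $\sigma_{1,e-1} = \frac{1}{2^{e-1}}\delta_{2^{e-1}} = \frac{1}{2^{e-1}}\sum_{k=0}^{2^{e-1}-1}(-1)^k\lambda_k$, hence $2^{2e-1}\sigma_{1,e-1} = 2^e\sum_{k=0}^{2^{e-1}-1}(-1)^k\lambda_k$, contributing $(-1)^k 2^e$ on indices $0 \le k \le 2^{e-1}-1$.

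Assembling the right-hand side coefficient of $\lambda_k$:
\begin{itemize}
\item For $0 \le k \le 2^{e-1}-1$: from $\mu_{e-1}$ we get $(-1)^k(2^e-2k-1)$; from $-\lambda_{2^{e-1}}\mu_{e-1}$ nothing (those indices start at $2^{e-1}$); from the $\sigma$ term $(-1)^k 2^e$. Sum: $(-1)^k(2^e-2k-1+2^e) = (-1)^k(2^{e+1}-2k-1)$, matching $\mu_e$.
\item For $2^{e-1} \le k \le 2^e-1$: only $-\lambda_{2^{e-1}}\mu_{e-1}$ contributes, giving $-(-1)^k(2^{e+1}-2k-1)$.
\end{itemize}

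This reveals a sign discrepancy on the upper half, so before writing the final proof I must recheck the intended formula or the sign conventions; as stated, the computation above suggests the identity holds on the lower half but produces the negative of the desired coefficient on the upper half. The main obstacle is precisely this sign on the indices $2^{e-1} \le k \le 2^e-1$: I expect that the correct resolution uses Lemma \ref{lm: lambda_p_adic_split}\ref{lm: p_adic_opposite} rather than \ref{lm: p_adic_split} for $\lambda_{2^{e-1}}\lambda_j$ when interpreting the product in the intended range, or that the factor $(\lambda_0-\lambda_{2^{e-1}})$ should be read with $\lambda_{2^{e-1}}$ acting as a reflection. I would therefore reconcile the two halves by carefully applying whichever of the two parts of Lemma \ref{lm: lambda_p_adic_split} matches the indexing, confirm the coefficient match on both halves, and conclude by equality of all $\lambda$-coordinates.
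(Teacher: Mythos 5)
Your coefficient computation is correct, and the sign discrepancy you found is real: the corollary as printed is false, and the problem lies in the statement (and in the paper's own proof of it), not in your application of Lemma \ref{lm: lambda_p_adic_split}\ref{lm: p_adic_split}. The correct identity is
\begin{gather*}
\mu_e = (\lambda_0 + \lambda_{2^{e-1}})\mu_{e-1} + 2^{2e-1}\sigma_{1,e-1},
\end{gather*}
which is exactly the version the paper actually uses afterwards, in the proof of Proposition \ref{pr: M_e_over_sigma_basis}. Your two-halves computation proves precisely this corrected identity: on the lower half the coefficients of $\mu_{e-1}+2^{2e-1}\sigma_{1,e-1}$ match those of $\mu_e$, and on the upper half you showed that $+\lambda_{2^{e-1}}\mu_{e-1}$ (not $-\lambda_{2^{e-1}}\mu_{e-1}$) reproduces the coefficients of $\mu_e$, because $(-1)^{k-2^{e-1}}=(-1)^k$ when $e\geq 2$. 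A concrete check at $e=2$ settles the sign beyond doubt: $\mu_1 = 3\lambda_0-\lambda_1$, $\mu_2 = 7\lambda_0-5\lambda_1+3\lambda_2-\lambda_3$, $2^3\sigma_{1,1} = 4\lambda_0-4\lambda_1$, and $\lambda_1\lambda_2=\lambda_3$, so
\begin{gather*}
(\lambda_0+\lambda_2)\mu_1 + 8\sigma_{1,1} = 7\lambda_0-5\lambda_1+3\lambda_2-\lambda_3 = \mu_2,\qandq
(\lambda_0-\lambda_2)\mu_1 + 8\sigma_{1,1} = 7\lambda_0-5\lambda_1-3\lambda_2+\lambda_3 \neq \mu_2.
\end{gather*}
The paper's proof performs the same split into the ranges $0\leq k\leq 2^{e-1}-1$ and $2^{e-1}\leq k\leq 2^e-1$ and asserts that the second summand equals $-\lambda_{2^{e-1}}\mu_{e-1}$; that assertion is the slip, as your substitution $k=2^{e-1}+j$ shows. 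Since everything downstream (Proposition \ref{pr: M_e_over_sigma_basis} and the Hilbert--Kunz formulas in Theorems \ref{th: HK_function_A_1_char_2} and \ref{th: HK_function_A_2_char_2}) relies on the plus version, nothing else is affected.

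Where your proposal does go wrong is in the suggested repair, and you should not have hedged there. Part \ref{lm: p_adic_opposite} of Lemma \ref{lm: lambda_p_adic_split} cannot apply to the product $\lambda_j\lambda_{2^{e-1}}$: it concerns $\lambda_{qj-1}$ with $q=p^n$, $n\geq 1$, and $2^{e-1}=qj-1$ would force $qj=2^{e-1}+1$ to be odd, which is impossible for $q$ a positive power of $2$. (The element acting as a reflection is $\lambda_{2^e-1}$, not $\lambda_{2^{e-1}}$.) So there is nothing to reconcile: your original use of part \ref{lm: p_adic_split} is the right one, and the correct conclusion is that the statement needs its sign corrected, after which your coefficient comparison is a complete proof and is essentially the same argument as the paper's, carried out without its sign error.
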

\begin{proof}
We use the expression given by Lemma \ref{lm: M_e_over_lambda_basis},
    \begin{align*}
    \mu_e &= \sum_{k=0}^{2^e-1} (-1)^{k}(2(2^e-k)-1)\lambda_{k}\\
    &=\sum_{k=0}^{2^{e-1}-1} (-1)^{k}(2(2^e-k)-1)\lambda_{k}
        + \sum_{k=2^{e-1}}^{2^e-1} (-1)^{k}(2(2^e-k)-1)\lambda_{k}.
    \end{align*}
    The second summand is $-\lambda_{2^{e-1}}\mu_{e-1}$ by \ref{lm: lambda_p_adic_split}; as for the first summand,
    \begin{align*}
        \sum_{k=0}^{2^{e-1}-1} (-1)^{k}(2(2^e-k)-1)\lambda_{k} &= \sum_{k=0}^{2^{e-1}-1} \left[(-1)^{k}(2(2^{e-1}-k)-1) + (-1)^k2\cdot 2^{e-1}\right]\lambda_{k}=\\
        &=\sum_{k=0}^{2^{e-1}-1} (-1)^{k}(2(2^{e-1}-k)-1)\lambda_k + 2^e\sum_{k=0}^{2^{e-1}-1} (-1)^k\lambda_k =\\
        &= \mu_{e-1} + 2^e2^{e-1}\sigma_{1,e-1}.
    \end{align*}
    and the result follows from the same lemma and \ref{pr: sigmaproperties}\ref{pr: sigma_1_delta_and_lambda}.

    Alternatively, one can deduce the same recursive formula from Lemma \ref{lm: M_e_over_delta_basis} directly observing that $(\mu_e-\delta_{2^e}) = (\lambda_0+\lambda_{2^{e-1}})(\mu_{e-1}-\delta_{2^{e-1}})+2^{e-1}\delta_{2^{e-1}}$, using the definition of $\lambda_i$ and Lemma 3.3 in \cite{hanmonsky}.
\end{proof}

\begin{remark}
    It is possible to deduce Corollary \ref{cor: A1_recursion_char_2} with the notation and results in \cite{mt06}. Using the formula for $\varphi_{fg}$ provided in Proposition 4.2 of that paper, one directly obtains that $\varphi:= \varphi_{xy} = 2t-t^2$, which is immediately seen to satisfy the functional equations $4\varphi(t/2) = \varphi+ 2t$ and $4\varphi((t+1)/2) = \varphi + 3$. Using arguments very similar to those in Example 1, one concludes that $4S(a) = a + \lambda_1 a + 2(\lambda_0-\lambda_1)\Delta$, where $a = \mathscr{L}(\varphi)$. It is not difficult to see that $(\lambda_0-\lambda_1)\Delta = 2S(\Delta)$, so Corollary \ref{cor: A1_recursion_char_2} follows.
\end{remark}
\begin{proposition}
\label{pr: M_e_over_sigma_basis}
Given $e\geq 1$, then $\mu_e = 2^e(\sigma_{0,e} + 2^e\sigma_{1,e}+\sum_{k=1}^{e-1} 2^k\sigma_{2^k+1,e}).$
\end{proposition}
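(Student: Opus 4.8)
The plan is to prove the formula by strong induction on $e$, using the recursive formula for $\mu_e$ established in Corollary \ref{cor: A1_recursion_char_2}, namely $\mu_e = (\lambda_0-\lambda_{2^{e-1}})\mu_{e-1} + 2^{2e-1}\sigma_{1,e-1}$. First I would verify the base case $e=1$: from Lemma \ref{lm: M_e_over_lambda_basis} we have $\mu_1 = \lambda_0 + \lambda_1$ (since $\lambda_0$ has coefficient $2\cdot 2 - 1 = 3$... let me instead read it off the claimed formula), and the proposed formula gives $\mu_1 = 2(\sigma_{0,1} + 2\sigma_{1,1})$ with an empty trailing sum, which I would check directly against $\mu_1 = \delta_2 + 2\delta_1 = 2\lambda_0 - \lambda_1$ using $\sigma_{0,1} = \tfrac12(\lambda_0+\lambda_1)$ and $\sigma_{1,1} = \tfrac12(\lambda_0-\lambda_1)$.

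For the inductive step, I would assume the formula holds for $e-1$ and substitute it into the recursion. The key computation is to apply the operator $(\lambda_0 - \lambda_{2^{e-1}})$ to each $\sigma$-term of $\mu_{e-1}$ and re-express the result in the $\sigma$-basis at level $e$. Here I would exploit two facts. First, the recursive split Proposition \ref{pr: sigmaproperties}\ref{pr: recursive_split}, $\sigma_{k,e-1} = \sigma_{k,e}+\sigma_{2^{e-1}+k,e}$, lets me rewrite each level-$(e-1)$ idempotent in terms of level-$e$ idempotents. Second, I need to understand how multiplication by $\lambda_{2^{e-1}}$ acts: since $\lambda_{2^{e-1}} = (-1)^{\cdots}$-type factor appears in the definition of the $\sigma_{k,e}$, multiplying $\sigma_{k,e}$ by $(\lambda_0 \pm \lambda_{2^{e-1}})$ either doubles it or annihilates it depending on the top binary digit $k_{e-1}$ of $k$. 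Concretely, $(\lambda_0 - \lambda_{2^{e-1}})\sigma_{k,e} = 2\sigma_{k,e}$ when $k_{e-1}=1$ and $0$ when $k_{e-1}=0$, which I would derive from the definition $\sigma_{k,e} = \tfrac{1}{2^e}\prod_{j}(\lambda_0+(-1)^{k_j}\lambda_{2^j})$ together with the orthogonality-type identities $(\lambda_0+\lambda_{2^{e-1}})(\lambda_0-\lambda_{2^{e-1}})=0$ and $(\lambda_0-\lambda_{2^{e-1}})^2 = 2(\lambda_0-\lambda_{2^{e-1}})$ used in the proof of Proposition \ref{pr: sigmaproperties}\ref{pr: idem_orth}.

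Combining these, the term $(\lambda_0-\lambda_{2^{e-1}})\mu_{e-1}$ becomes a sum over only those level-$e$ idempotents $\sigma_{k,e}$ whose top binary digit is $1$, i.e. those with index in the range $2^{e-1} \le k \le 2^e - 1$, each picking up a factor of $2$. Matching indices, the surviving terms should reproduce exactly the $\sigma_{2^{e-1}+1,e}$ part of the target formula, while the remaining pieces $\sigma_{0,e}$, $\sigma_{1,e}$, and $\sigma_{2^k+1,e}$ for $k<e-1$ must come from the split of the low-index $\sigma_{0,e-1}$ and from the added correction term $2^{2e-1}\sigma_{1,e-1}$. I would use the split $\sigma_{1,e-1} = \sigma_{1,e} + \sigma_{2^{e-1}+1,e}$ to distribute this correction, and then carefully track the coefficients $2^k$ to confirm they align with the claimed $2^e\bigl(\sigma_{0,e}+2^e\sigma_{1,e}+\sum_{k=1}^{e-1}2^k\sigma_{2^k+1,e}\bigr)$.

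The main obstacle I anticipate is the bookkeeping of coefficients and indices when passing from level $e-1$ to level $e$: one must verify that the index $2^{e-1}+1$ produced by the top term of the recursion is precisely the new summand $2^{e-1}\sigma_{2^{e-1}+1,e}$ (the $k=e-1$ term of the target sum), that the coefficient $2^{2e-1}$ of the correction splits correctly into the $2^e\cdot 2^e$ coefficient of $\sigma_{1,e}$ after the factor-of-$2$ absorption, and that no unexpected cross terms survive. I expect the cleanest way to control this is to compute the action of $(\lambda_0-\lambda_{2^{e-1}})$ on each named summand of the inductive hypothesis separately, record which level-$e$ idempotents each contributes, and then assemble the total, checking the three families of indices ($0$, $1$, and $2^k+1$) one at a time. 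Lemma \ref{lm: lambda_sigma_product_e} and the interaction formula \ref{pr: sigmaproperties}\ref{pr: inter_orthogonality} should supply any remaining identities needed to simplify products involving $\lambda_{2^{e-1}}$.
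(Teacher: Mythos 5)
Your strategy is structurally the same as the paper's: induction on $e$, substituting the recursion of Corollary \ref{cor: A1_recursion_char_2} into the inductive hypothesis and converting level-$(e-1)$ idempotents to level-$e$ ones via the split \ref{pr: sigmaproperties}\ref{pr: recursive_split}. The base case is fine modulo a slip ($\delta_2+2\delta_1 = 3\lambda_0-\lambda_1$, not $2\lambda_0-\lambda_1$; it does equal $2\sigma_{0,1}+4\sigma_{1,1}$). However, the inductive step as you describe it cannot close, because you take the recursion with the sign as printed, $\mu_e = (\lambda_0-\lambda_{2^{e-1}})\mu_{e-1}+2^{2e-1}\sigma_{1,e-1}$, and that minus sign is a typo in the paper. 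You correctly derive that $(\lambda_0-\lambda_{2^{e-1}})$ annihilates $\sigma_{k,e}$ when $k_{e-1}=0$ and doubles it when $k_{e-1}=1$; combined with the split $\sigma_{k,e-1}=\sigma_{k,e}+\sigma_{2^{e-1}+k,e}$ this forces $(\lambda_0-\lambda_{2^{e-1}})\sigma_{k,e-1}=2\sigma_{2^{e-1}+k,e}$. Hence every term of $(\lambda_0-\lambda_{2^{e-1}})\mu_{e-1}$ has index at least $2^{e-1}$, and the correction term $2^{2e-1}\sigma_{1,e-1}=2^{2e-1}(\sigma_{1,e}+\sigma_{2^{e-1}+1,e})$ supplies only $\sigma_{1,e}$ among low indices, so the required term $2^e\sigma_{0,e}$ can never be produced; moreover indices such as $2^{e-1}$ and $2^{e-1}+2^k+1$, which are not of the form $2^k+1$, survive with nonzero coefficients. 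Your expectation that $\sigma_{0,e}$, $\sigma_{1,e}$, $\sigma_{2^k+1,e}$ \emph{come from the split of the low-index} $\sigma_{0,e-1}$ contradicts the annihilation rule you yourself established: $(\lambda_0-\lambda_{2^{e-1}})\sigma_{0,e-1}=2\sigma_{2^{e-1},e}$.

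The fix is that the correct recursion is $\mu_e = (\lambda_0+\lambda_{2^{e-1}})\mu_{e-1}+2^{2e-1}\sigma_{1,e-1}$, which is what the derivation inside the proof of Corollary \ref{cor: A1_recursion_char_2} actually yields (for $e\geq 2$ the second summand there is $+\lambda_{2^{e-1}}\mu_{e-1}$, since $(-1)^{2^{e-1}+j}=(-1)^j$), and what the paper's own proof of this proposition uses, in the form $(\lambda_0+\lambda_{2^e})\sigma_{k,e}=2\sigma_{k,e+1}$. One can confirm the sign at $e=2$: $(\lambda_0+\lambda_2)(3\lambda_0-\lambda_1)+4(\lambda_0-\lambda_1)=7\lambda_0-5\lambda_1+3\lambda_2-\lambda_3=\mu_2$, whereas the minus sign flips the signs of $\lambda_2$ and $\lambda_3$. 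With the plus sign, $(\lambda_0+\lambda_{2^{e-1}})\sigma_{k,e-1}=2\sigma_{k,e}$ for all $0\leq k\leq 2^{e-1}-1$, so the low-index terms of $\mu_{e-1}$ are preserved and doubled, the correction term splits into $2^{2e-1}\sigma_{1,e}+2^{2e-1}\sigma_{2^{e-1}+1,e}$, and the coefficients assemble to exactly $2^e\bigl(\sigma_{0,e}+2^e\sigma_{1,e}+\sum_{k=1}^{e-1}2^k\sigma_{2^k+1,e}\bigr)$; once you replace the sign, the rest of your plan goes through and coincides with the paper's argument.
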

\begin{proof}
We prove it by induction on $e$. By Proposition \ref{cor: A1_recursion_char_2}, $M_1 = 3\lambda_0-\lambda_1 = 2\sigma_{0,1}+4\sigma_{1,1},$ so the result holds when $e=1$ (where we regard $\sum_{k=1}^{0}2^k\sigma_{2^k+1,1} = 0$.)

To show the general $e\geq 1$ case, we will employ the recursive formula in Proposition \ref{cor: A1_recursion_char_2}, the property that $(\lambda_0+\lambda_{2^e})\sigma_{k,e} = 2\sigma_{k,e+1}$, and Proposition \ref{pr: sigmaproperties}\ref{pr: recursive_split}:
\begin{align*}
\mu_{e+1} &= (\lambda_0 + \lambda_{2^{e}})\mu_e + 2^{2(e+1)-1}\sigma_{1,e}\\
    &=2^e(\lambda_0+\lambda_{2^e})(\sigma_{0,e} + 2^e\sigma_{1,e}+\sum_{k=1}^{e-1} 2^k\sigma_{2^k+1,e}) +2^{2e+1}\sigma_{1,e}\\
    &=2^e\cdot 2(\sigma_{0,e+1} + 2^e\sigma_{1,e+1} + \sum_{k=1}^{e-1}2^k\sigma_{2^k+1,e+1}) + 2^{2e+1}(\sigma_{1,e+1}+\sigma_{2^e+1,e+1})\\
    &=2^{e+1}(\sigma_{0,e+1} + 2^e\sigma_{1,e+1} + \sum_{k=1}^{e-1}2^k\sigma_{2^k+1,e+1}+2^e\sigma_{1,e+1}+2^e\sigma_{2^e+1,e+1}).
\end{align*}
The result follows from the last line.
\end{proof}
\begin{theorem}
\label{th: HK_function_A_1_char_2}
For $d\geq 2$,
\begin{gather*}
    \text{HK}_{e}(R_{2,d}) =\begin{dcases}
        \frac{2^m}{2^m-1}2^{de}-\frac{(2^{m-1})^e}{2^m-1}, &\text{ if } d=2m-1 ;\\
        \frac{2^m+1}{2^m}2^{de}, &\text{ if } d=2m.
    \end{dcases}
\end{gather*}
where $R_{p,d}$ is as in Definition \ref{df: Qd}. In particular, the Hilbert–Kunz multiplicity of an $A_1$ singularity in characteristic 2 is
\begin{gather*}
    \eHK{R_{2,d}} = \begin{dcases}
        \frac{2^m}{2^m-1}, &\text{ if } d=2m-1 ;\\
        \frac{2^m+1}{2^m}, &\text{ if } d=2m.\\
    \end{dcases}
\end{gather*}
\end{theorem}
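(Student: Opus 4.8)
The plan is to combine the $\sigma$-basis expression for $\mu_e$ from Proposition \ref{pr: M_e_over_sigma_basis} with the tensor decompositions in Proposition \ref{pr: A_1_in_representation_ring}, and then extract $\HK_e$ by applying the linear functional $\alpha$. Recall from the remark in Section \ref{sec:han_monsky_ring} that if we view $\mu_e' := k[[x_0,\dots,x_d]]/\m^{[2^e]}$ as a $k$-object with $T$-action $Q_d$, then $\HK_e(R_{2,d}) = \alpha(\mu_e')$. So the whole computation reduces to computing $\alpha$ of the right-hand sides in Proposition \ref{pr: A_1_in_representation_ring}, namely $\alpha(\mu_e^m)$ in the odd case and $\alpha(2\,\mu_e^m\,\delta_{2^{e-1}})$ in the even case.

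The key step is to exploit the idempotency and orthogonality of the $\sigma$-basis (Proposition \ref{pr: sigmaproperties}\ref{pr: idem_orth}) to raise $\mu_e$ to the $m$-th power. Writing $\mu_e = 2^e\big(\sigma_{0,e} + 2^e\sigma_{1,e} + \sum_{k=1}^{e-1} 2^k\sigma_{2^k+1,e}\big)$, the orthogonality $\sigma_{k,e}\sigma_{l,e} = \delta_{kl}\,\sigma_{k,e}$ means that taking powers simply raises each scalar coefficient to the $m$-th power while leaving the orthogonal idempotents in place:
\begin{gather*}
\mu_e^m = 2^{em}\Big(\sigma_{0,e} + 2^{em}\sigma_{1,e} + \sum_{k=1}^{e-1} 2^{km}\sigma_{2^k+1,e}\Big).
\end{gather*}
Applying $\alpha$ and using $\alpha(\sigma_{k,e}) = 2^{-e}$ from Proposition \ref{pr: sigmaproperties}\ref{pr: alpha_of_sigma}, the odd case $d = 2m-1$ becomes a finite geometric sum:
\begin{gather*}
\HK_e(R_{2,d}) = 2^{em}\cdot 2^{-e}\Big(1 + 2^{em} + \sum_{k=1}^{e-1} 2^{km}\Big),
\end{gather*}
and summing the geometric series $\sum_{k=1}^{e-1} 2^{km} = (2^{em}-2^m)/(2^m-1)$ and simplifying should collapse to the claimed closed form $\tfrac{2^m}{2^m-1}2^{de} - \tfrac{(2^{m-1})^e}{2^m-1}$, recalling $de = (2m-1)e$.

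For the even case $d = 2m$, I would first need $\delta_{2^{e-1}}$ in the $\sigma$-basis; by Proposition \ref{pr: sigmaproperties}\ref{pr: sigma_1_delta_and_lambda} one has $\delta_{2^{e-1}} = 2^{e-1}\sigma_{1,e-1}$, and the recursive split in \ref{pr: sigmaproperties}\ref{pr: recursive_split} lets me re-express $\sigma_{1,e-1} = \sigma_{1,e} + \sigma_{2^{e-1}+1,e}$ so that everything lives in $\Lambda_e\otimes\mathbb{Q}$. Multiplying $\mu_e^m$ by $2\,\delta_{2^{e-1}}$ and again using orthogonality of the $\sigma_{\cdot,e}$, only the terms whose indices survive the product contribute. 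The main obstacle I anticipate is precisely this bookkeeping: matching the index $2^{e-1}+1$ appearing in $\delta_{2^{e-1}}$ against the indices $0,\,1,\,2^k+1$ appearing in $\mu_e^m$ to see which orthogonal idempotents multiply to zero and which survive. Once that sorting is done correctly, applying $\alpha$ should yield a single clean power-of-two expression with no lower-order correction term, matching the stated $\tfrac{2^m+1}{2^m}2^{de}$. Finally, in both cases, dividing the leading coefficient by $2^{de}$ (and noting the subleading term $(2^{m-1})^e = 2^{(m-1)e}$ is of strictly lower order than $2^{de} = 2^{(2m-1)e}$ in the odd case) gives the Hilbert–Kunz multiplicities as stated.
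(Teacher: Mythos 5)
Your proposal is correct and follows essentially the same route as the paper's proof: both use the tensor decomposition of Proposition \ref{pr: A_1_in_representation_ring}, the $\sigma$-basis expression of Proposition \ref{pr: M_e_over_sigma_basis}, idempotency/orthogonality to compute $\mu_e^m$, and $\alpha(\sigma_{k,e})=2^{-e}$ to reduce the odd case to the geometric sum $2^{e(m-1)}\sum_{k=0}^{e}2^{km}$ and the even case to the two surviving terms $\sigma_{1,e}$ and $\sigma_{2^{e-1}+1,e}$ after multiplying by $2\delta_{2^{e-1}}=2^{e}(\sigma_{1,e}+\sigma_{2^{e-1}+1,e})$. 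The index bookkeeping you flag as the main obstacle resolves exactly as you anticipate, yielding $\frac{2^m+1}{2^m}2^{de}$ with no lower-order term.
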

\begin{proof}
    Firstly, let $d=2m-1$. The result follows from Proposition \ref{pr: M_e_over_sigma_basis}, using idempotency and orthogonality of the $\sigma$-elements and Proposition \ref{pr: sigmaproperties}\ref{pr: alpha_of_sigma}:
    \begin{align*} 
    \text{HK}_e(R_{2,d}) &= \alpha\left(\mu_e^m\right) = 2^{em}\alpha\left(\sigma_{0,e} + 2^{em}\sigma_{1,e} + \sum_{k=1}^{e-1}2^{km}\sigma_{2^k+1,e}\right)\\
    &= 2^{em}\sum_{k=0}^e 2^{km}\frac{1}{2^e} = 2^{e(m-1)}\sum_{k=0}^e 2^{km} = \frac{2^{(e+1)m}-1}{2^m-1}.
    \end{align*}
    So the formula follows in the odd-dimensional case.

Now, for the even-dimensional case, $d=2m$, observe that $R_{2,d}/\m^{[2^e]} = 2\delta_{2^{e-1}}\mu_e^m$, and by Proposition \ref{pr: sigmaproperties}\ref{pr: sigma_1_delta_and_lambda}, we have that $\delta_{2^{e-1}} = 2^{e-1}\sigma_{1,e-1} = 2^{e-1}(\sigma_{1,e}+\sigma_{2^{e-1}+1,e}).$ Then, we proceed similarly – by Proposition \ref{pr: A_1_in_representation_ring}, we have that $\HK_e(R_{2,2m}) = \alpha(\mu_e^m\cdot 2\delta_{2^{e-1}}),$ and
\begin{align*}
    \mu_e^m\cdot 2\delta_{2^{e-1}} &= 2^{em}(\sigma_{0,e} + 2^{em}\sigma_{1,e} + \sum_{k=1}^{e-1}2^{km}\sigma_{2^k+1,e})\cdot 2^e(\sigma_{1,e}+\sigma_{2^{e-1}+1,e})\\
    &= 2^{e(m+1)}(2^{em}\sigma_{1,e} + 2^{(e-1)m}\sigma_{2^{e-1}+1,e}).
\end{align*}
Again, using that $\alpha(\sigma_{k,e}) = \frac{1}{2^e}$, from applying $\alpha$ to the above, we obtain that $\text{HK}_n(R_{2,d})$ is
\begin{equation*}
    2^{e(m+1)}(2^{em}\frac{1}{2^e}+2^{(e-1)m}\frac{1}{2^e}) = 2^{em}(2^{em}+2^{(e-1)m}) = 2^{nd} + \frac{1}{2^m}2^{dn} = \frac{2^m+1}{2^m}2^{dn}.
\end{equation*}
\end{proof}

\subsection{The \texorpdfstring{$A_2$}{A2} singularity in characteristic 2}\label{subsec: A2_char_2}
We now compute $\eHK{S_{2,d}}$. The main difficulty of this computation is dealing with the fact that the equation of the $A_2$ singularity has an $x^3$ term in its equation, and the $k$-object $k[x]/(x^{2^e})$ with $T$-action $x^3$ has a complicated decomposition in the Han–Monsky ring. However, the orthogonality property of the $\sigma$-basis allows us to disregard most of the coefficients of the decomposition. The first goal is obtaining a recursive formula for this $k$-object.

\begin{proposition}
    \label{pr: A_2_in_representation_ring}
        Let $k$ be a field of characteristic 2. Then, in the Han–Monsky ring
        \begin{gather*}
            \left(\frac{k[[x_0,\dots,x_d]]}{(x_0^{[2^e]},\dots,x_d^{[2^e]})},P_d\right) = \begin{cases} \mu_e^m3\delta_{2^{e}/3} & (d=2m)\\
            \mu_e^{m-1}2\delta_{2^{e-1}}3\delta_{2^e/3} & (d=2m-1)
            \end{cases}
        \end{gather*}
        where $P_d$ was defined in Definition \ref{df: Qd}, and $\mu_e$ is as defined in Lemma \ref{lm: M_e_over_delta_basis}.
    \end{proposition}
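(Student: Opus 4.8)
The plan is to follow the same template as the $A_1$ case in Proposition \ref{pr: A_1_in_representation_ring}: the defining polynomial $P_d$ is a sum of terms supported on pairwise disjoint blocks of variables, so the associated $k$-object splits as a tensor product over the blocks. Concretely, if we write $P_d = f_1 + \dots + f_r$ where the $f_i$ live in disjoint variables, then $k[[x_0,\dots,x_d]]/(x_0^{[2^e]},\dots,x_d^{[2^e]})$ is the tensor product of the single-block quotients, and the $T$-action by $P_d = \sum_i f_i$ is exactly the tensor-product action of Proposition \ref{pr: product_and_sum_in_rep_ring}. By the mechanism illustrated in Example \ref{ex: example_diagonal_hypersurface_k_object}, the $k$-object therefore factors in the Han–Monsky ring into one factor per block.

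Next I would read off the blocks of $P_d$ according to the parity of $d$. For $d = 2m$, the form $x_0x_1 + \dots + x_{d-2}x_{d-1} + x_d^3$ splits into $m$ hyperbolic pairs $x_{2i}x_{2i+1}$ and a single cubic block $x_d^3$; for $d = 2m-1$, the form $x_0x_1 + \dots + x_{d-1}^2 + x_d^3$ splits into $m-1$ hyperbolic pairs, one quadratic block $x_{d-1}^2$, and one cubic block $x_d^3$. Each hyperbolic pair $(k[x,y]/(x^{[2^e]},y^{[2^e]}),xy)$ equals $\mu_e$ by Lemma \ref{lm: M_e_over_delta_basis}, and the quadratic block $(k[x]/(x^{2^e}),x^2)$ equals $2\delta_{2^{e-1}}$ by the decomposition recorded in the remark after Corollary \ref{cor: delta_basis_is_a_basis} (take $a = 2^e$, $c = 2$, so that $r = 0$). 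Multiplying the block contributions then yields $\mu_e^m\,3\delta_{2^e/3}$ and $\mu_e^{m-1}\,2\delta_{2^{e-1}}\,3\delta_{2^e/3}$ respectively, provided the cubic factor is identified.

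The cubic block is the only piece absent from the $A_1$ computation, and it is where I expect the one genuine calculation to sit. Applying the same remark to $(k[x]/(x^{2^e}),x^3)$, I would write $2^e = 3k + r$ with $k = \lfloor 2^e/3\rfloor$ and $r \in \{1,2\}$, so that the object decomposes as $(3-r)\delta_k + r\delta_{k+1}$. Comparing with Definition \ref{df: representation_ring_rational_coefficients}, in which $\delta_{2^e/3} = (1 - \tfrac{r}{3})\delta_k + \tfrac{r}{3}\delta_{k+1}$, one gets $(3-r)\delta_k + r\delta_{k+1} = 3\delta_{2^e/3}$; note that this expression is an honest integral element of the Han–Monsky ring despite the rational-coefficient notation, so the stated equalities hold in $\Gamma$. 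Substituting this into the two products finishes both cases. The only point that needs care, rather than difficulty, is verifying that the tensor factorization genuinely matches the additive structure of $P_d$ under the action of Proposition \ref{pr: product_and_sum_in_rep_ring}; once that is in hand the argument is pure block bookkeeping.
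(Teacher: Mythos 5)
Your proposal is correct and takes essentially the same approach as the paper: decompose $P_d$ into blocks in disjoint variables, so that the $k$-object factors as a tensor product, and identify each hyperbolic pair with $\mu_e$, the square block with $2\delta_{2^{e-1}}$, and the cubic block with $3\delta_{2^e/3}$. The only cosmetic difference is at the cubic factor, which the paper identifies by citing Lemma \ref{lm: lemmarepring} with $s=0$, while you derive $(3-r)\delta_k + r\delta_{k+1} = 3\delta_{2^e/3}$ directly from the remark after Corollary \ref{cor: delta_basis_is_a_basis} together with Definition \ref{df: representation_ring_rational_coefficients}; both justifications are valid.
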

    \begin{proof}
        Let $d=2m+1$. Then, $P_d=x_0x_1+\dots+x_{d-3}x_{d-2}+x_{d-1}^2+x_d^3$ and the object decomposes as
        \begin{gather*}
        \frac{k[x,y]}{(x^{[2^e]},y^{[2^e]})}\otimes \dots \otimes \frac{k[x,y]}{(x^{[2^e]},y^{[2^e]})} \otimes \frac{k[y]}{(y^{[2^e]})} \otimes \frac{k[x]}{(x^{[2^e]})},
        \end{gather*}
        where the $k$-object $k[x,y]/(x^{[2^e]},y^{[2^e]})$ has $T$-action $xy$, the $k$-object $k[y]/(y^{[2^e]})$ has the $T$-action $y^2$, and finally the $k$-object $k[x]/(x^{[2^e]})$ has $T$-action $x^3$. We already know that the first is $\mu_e$ and the second $2\delta_{2^{e-1}}$. The third $k$-object is $3\delta_{2^e/3}$, as one deduces from Lemma \ref{lm: lemmarepring} for $s = 0$.

        If $d=2m$, then $P_d = x_0x_1+\dots+x_{d-2}x_{d-1}+x_d^3$ and the result follows from the same discussion.
    \end{proof}

    \begin{lemma}
        \label{lm: A2_char2_recursion}
        For $e\geq 1$, let $\xi_e := 3\delta_{2^e/3}$. Then, $\xi_1 = 2\lambda_0 = 2\sigma_{0,1}+2\sigma_{1,1},\xi_2 = 3\lambda_0-\lambda_1 = 2\sigma_{0,2}+4\sigma_{1,2}+2\sigma_{2,2}+4\sigma_{3,2}$,
    \begin{gather*}
    \xi_{e+1}-\lambda_{2^e-1}\xi_e = 3\delta_{2^e}, e\geq 1.
    \end{gather*}
    \end{lemma}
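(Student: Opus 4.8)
The plan is to verify the two base cases by direct computation and then establish the recursion by expanding both $\xi_{e+1}$ and $\lambda_{2^e-1}\xi_e$ in the $\lambda$-basis and comparing coefficients. Throughout, write $2^e/3 = q_e + z_e$ with $q_e = \lfloor 2^e/3\rfloor$ and $z_e \in \{1/3,2/3\}$ ($z_e = 1/3$ when $e$ is even, $z_e = 2/3$ when $e$ is odd). Unwinding Definition \ref{df: representation_ring_rational_coefficients} gives the uniform expression $\xi_e = 3\delta_{2^e/3} = 3(1-z_e)\delta_{q_e} + 3z_e\delta_{q_e+1}$, where the two coefficients $3(1-z_e), 3z_e$ are $\{1,2\}$ in some order.

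For the base cases, $e=1$ yields $\delta_0 + 2\delta_1 = 2\lambda_0$ and $e=2$ yields $2\delta_1 + \delta_2 = 3\lambda_0 - \lambda_1$, using $\delta_0 = 0$, $\delta_1 = \lambda_0$, $\delta_2 = \lambda_0 - \lambda_1$. The claimed $\sigma$-expansions then follow by substituting Proposition \ref{pr: sigmaproperties}\ref{pr: sigma_in_terms_of_lambda} into these $\lambda$-expressions (equivalently, by inverting the $\sigma$-to-$\lambda$ relations in $\Lambda_1\otimes\mathbb{Q}$ and $\Lambda_2\otimes\mathbb{Q}$).

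For the recursion, first convert to the $\lambda$-basis via $\delta_m = \sum_{i=0}^{m-1}(-1)^i\lambda_i$, obtaining the uniform formula $\xi_e = \sum_{i=0}^{q_e-1}3(-1)^i\lambda_i + 3z_e(-1)^{q_e}\lambda_{q_e}$. Then apply the reflection identity $\lambda_i\lambda_{2^e-1} = \lambda_{2^e-1-i}$ from Lemma \ref{lm: lambda_p_adic_split}\ref{lm: p_adic_opposite}, which is legitimate since the support $\{0,\dots,q_e\}$ of $\xi_e$ lies well inside $\{0,\dots,2^e-1\}$; this carries that support onto $\{2^e-1-q_e,\dots,2^e-1\}$. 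The two arithmetic facts that drive everything are $q_e + q_{e+1} = 2^e-1$ and $z_e + z_{e+1} = 1$, both of which follow at once from $(2^e + 2^{e+1})/3 = 3\cdot 2^e/3$ and the definition of $q_e, z_e$. Since $q_e + q_{e+1} = 2^e-1$ is odd, $q_e$ and $q_{e+1}$ have opposite parity, so $(-1)^{q_e} = -(-1)^{q_{e+1}}$. Using $2^e-1-q_e = q_{e+1}$, the reflected boundary term $3z_e(-1)^{q_e}\lambda_{q_{e+1}}$ combines with the boundary term $3z_{e+1}(-1)^{q_{e+1}}\lambda_{q_{e+1}} = (3-3z_e)(-1)^{q_{e+1}}\lambda_{q_{e+1}}$ of $\xi_{e+1}$ to give exactly $3(-1)^{q_{e+1}}\lambda_{q_{e+1}}$; the reflected low-index terms supply the coefficients $3(-1)^j$ for the indices $j > q_{e+1}$, and the low-index terms of $\xi_{e+1}$ supply $3(-1)^i$ for $i < q_{e+1}$. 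Assembling these gives $\xi_{e+1} - \lambda_{2^e-1}\xi_e = \sum_{i=0}^{2^e-1}3(-1)^i\lambda_i = 3\delta_{2^e}$, as desired.

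The main obstacle is the bookkeeping at the single boundary index $q_{e+1}$: one must check that the reflection lands precisely there (so that the block of indices coming from $\xi_{e+1}$ and the reflected block coming from $\lambda_{2^e-1}\xi_e$ meet with neither overlap nor gap) and that the two partial boundary coefficients add correctly. This is exactly what the identities $q_e + q_{e+1} = 2^e-1$ and $z_e + z_{e+1} = 1$, together with the ensuing parity relation, guarantee; recording them up front is what lets the argument proceed uniformly rather than splitting into separate even and odd $e$ cases.
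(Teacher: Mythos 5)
Your proposal is correct and follows essentially the same route as the paper's proof: expand $\xi_e$ and $\xi_{e+1}$ over the $\lambda$-basis via Definition \ref{df: representation_ring_rational_coefficients}, reflect $\xi_e$ using Lemma \ref{lm: lambda_p_adic_split}\ref{lm: p_adic_opposite}, and combine the two boundary terms at the single meeting index using exactly the identities $q_e+q_{e+1}=2^e-1$ and $z_e+z_{e+1}=1$ (the paper's $D = 2^e-(d+1)$ and $R+r=3$) together with the resulting opposite parity. Your write-up is in fact somewhat more explicit than the paper's about the parity and sign bookkeeping that the paper compresses into ``one can check.''
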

    \begin{proof}
    Let $e\geq 1$, and denote $2^e = 3d+r$ and $2^{e+1} = 3D+R$, where $d,r, D,R\in \mathbb{Z}$ and $0\leq r,R<3$. Then, from Definition \ref{df: representation_ring_rational_coefficients},
    \begin{align*}
        \xi_{e+1} &= 3\lambda_0-3\lambda_1+\dots+(-1)^{D-1}3\lambda_{D-1}+(-1)^DR\lambda_{D},\\
        \xi_e &= 3\lambda_0-3\lambda_1+\dots+(-1)^{D-1}3\lambda_{d-1}+(-1)^dr\lambda_{d}.
    \end{align*}
    Then, using \ref{lm: lambda_p_adic_split}\ref{lm: p_adic_opposite} one obtains
    \begin{multline}
        \label{fm: proving_recurrence_of_Ce}
    \xi_{e+1}-\lambda_{2^e-1}\xi_e = (3\lambda_0-\dots +(-1)^DR\lambda_D)-\\
    -((-1)^dr\lambda_{2^e-d-1}+(-1)^{d-1}3\lambda_{2^e-d-2}+\dots-3\lambda_{2^e-2}+3\lambda_{2^e-1}).
    \end{multline}
    One can check that $D = 2^e-(d+1)$ and $R+r = 3$, which implies that $(-1)^DR\lambda_D-(-1)^dr\lambda_{2^e-d-1} = (-1)^D3\lambda_D$, which implies that the RHS in (\ref{fm: proving_recurrence_of_Ce}) is $3\delta_{2^e}$.
    \end{proof}

        Now that we have a recursive formula for $\xi_e$, we can compute some coefficients over the $\sigma$-basis. Since most of the coefficients get annihilated by the orthogonality property in the computation of the Hilbert–Kunz function of the $A_2$ singularity (see Theorem \ref{th: HK_function_A_2_char_2}), we don't need to compute the complete decomposition.
    \begin{lemma}
        \label{lm: C_e_critical_coefficients}
        Let $\xi_e$ as in Lemma \ref{lm: A2_char2_recursion}. Let $a_{k,e}\in \mathbb{Q}$ such that $\xi_e = \sum_{k=0}^{2^e-1} a_{k,e}\sigma_{k,e}$. Then $a_{0,e} = 2$, $a_{1,e} = 2^{e+1}$, $a_{2^k+1,e} = 2^{k+1}$ for $k=1,\dots,e-2$, $e\geq 2$.
    \end{lemma}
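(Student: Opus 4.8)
The plan is to prove the three coefficient formulas simultaneously by induction on $e$, starting from the recursion $\xi_{e+1} = \lambda_{2^e-1}\xi_e + 3\delta_{2^e}$ of Lemma \ref{lm: A2_char2_recursion} and using the explicit expansions of $\xi_1,\xi_2$ recorded there as base cases. The first move is to convert this multiplicative recursion into an additive recursion on the $\sigma$-coordinates, after which the three families can simply be read off one index at a time.

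First I would compute how $\lambda_{2^e-1}$ acts on the level-$e$ $\sigma$-basis. Writing $\xi_e = \sum_{k=0}^{2^e-1} a_{k,e}\sigma_{k,e}$ and $s(k)=\sum_{j=0}^{e-1}k_j$, Lemma \ref{lm: lambda_sigma_product_e} gives $\lambda_{2^e-1}\sigma_{k,e} = (-1)^{s(k)}\sigma_{k,e}$, so multiplication by $\lambda_{2^e-1}$ is diagonal in this basis. Splitting each level-$e$ element via $\sigma_{k,e} = \sigma_{k,e+1}+\sigma_{2^e+k,e+1}$ (Proposition \ref{pr: sigmaproperties}\ref{pr: recursive_split}) and rewriting the inhomogeneous term as $3\delta_{2^e} = 3\cdot 2^e\sigma_{1,e} = 3\cdot 2^e(\sigma_{1,e+1}+\sigma_{2^e+1,e+1})$ using Proposition \ref{pr: sigmaproperties}\ref{pr: sigma_1_delta_and_lambda}, I obtain the coordinate recursion: for each index $j$ at level $e+1$, with $j'$ its residue in $\{0,\dots,2^e-1\}$,
\begin{gather*}
a_{j,e+1} = (-1)^{s(j')}\,a_{j',e} + 3\cdot 2^e\quad(\text{only when } j\equiv 1 \bmod 2^e).
\end{gather*}
Thus every level-$e$ coordinate feeds its two level-$(e+1)$ children with the \emph{same} sign $(-1)^{s(j')}$, and the extra $3\cdot 2^e$ lands precisely on the two indices $1$ and $2^e+1$.

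With this recursion the three families follow by tracking single indices. Since $s(0)=0$ and $0\not\equiv 1$, the coordinate $a_{0,e}$ is invariant and stays at its base value $2$. The index $1$ has $s(1)=1$, so its coordinate obeys $a_{1,e+1} = -a_{1,e}+3\cdot 2^e$, a first-order recurrence that I would solve against the base value to read off $a_{1,e}$. For $2^k+1$ with $k\ge 1$ I would use a \emph{birth} argument: this index is too large to occur before level $k+1$, where it appears as the upper child of the index $1$ at level $k$ and, because its residue modulo $2^k$ equals $1$, simultaneously collects the inhomogeneous term; hence $a_{2^k+1,\,k+1} = -a_{1,k}+3\cdot 2^k$, and substituting the value of $a_{1,k}$ produces $2^{k+1}$.

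The step that needs care is showing this born value is then frozen. I would check two facts: the index $2^k+1$ lies in the lower half at every later step, so its sign factor is $(-1)^{s(2^k+1)}=(-1)^2=1$ (exactly the bits in positions $0$ and $k$ are set); and it is congruent to $1$ modulo $2^e$ only for $e\le k$, so once $e>k$ it never again receives an inhomogeneous contribution. Together these give $a_{2^k+1,e}=2^{k+1}$ for all $e\ge k+1$, in particular on the stated range $1\le k\le e-2$. The main obstacle is precisely this bookkeeping of birth level, sign, and inhomogeneous hits; once it is pinned down the induction is routine, and the self-contained nature of the recursion — each coordinate depending only on its own ancestor — is exactly what makes it legitimate to track just these three families while ignoring all remaining coordinates.
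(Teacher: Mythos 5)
Your proposal is correct and follows essentially the same route as the paper's proof: both rest on the recursion of Lemma \ref{lm: A2_char2_recursion}, the diagonal action of $\lambda_{2^e-1}$ on the $\sigma$-basis (Lemmas \ref{lm: lambda_sigma_product_e} and \ref{lm: lambda_sigma_product_e+1}), the identities $\sigma_{k,e}=\sigma_{k,e+1}+\sigma_{2^e+k,e+1}$ and $\delta_{2^e}=2^e\sigma_{1,e}$, and the same birth-value-then-frozen bookkeeping of the three coordinate families; the paper merely extracts one coordinate at a time by multiplying the recursion with the relevant idempotent, rather than first writing down your universal coordinate recursion, which is a cosmetic difference. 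One remark: your recurrence $a_{1,e+1}=3\cdot 2^e-a_{1,e}$ with $a_{1,1}=2$ yields $a_{1,e}=2^e$ — exactly the value the paper's own proof derives and later uses in Theorem \ref{th: HK_function_A_2_char_2} and in Table \ref{tab: critical_coefficients_of_x3} — so the value $a_{1,e}=2^{e+1}$ printed in the statement of the Lemma is a typo, and your argument (like the paper's) in fact establishes the corrected claim.
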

\begin{proof}    
        Throughout the proof, we use that $\sigma_{k,e}\xi_e = a_{k,e}\sigma_{k,e}$, and the rest of the orthogonality and idempotency properties (\ref{pr: sigmaproperties}\ref{pr: idem_orth}, \ref{pr: extracting_coordinates_orthogonality} and \ref{pr: inter_orthogonality}) without further mention. Also, $e\geq 2$.
    \begin{enumerate}
        \item We start computing $a_{0,e}$ and $a_{1,e}$, for $e\geq 2$. Since $\delta_{2^e} = 2^e\sigma_{1,e}$ by \ref{pr: sigmaproperties}\ref{pr: sigma_1_delta_and_lambda}, it follows that $\sigma_{0,e+1}\delta_{2^e} = 0$, and $\sigma_{1,e+1}\delta_{2^e} = 2^e\sigma_{1,e+1}$. Also, by Lemma \ref{lm: lambda_sigma_product_e+1}, $\sigma_{0,e+1}\lambda_{2^e-1} = \sigma_{0,e+1}$ and $\sigma_{1,e+1}\lambda_{2^e-1} = -\sigma_{1,e+1}$. Thus, multiplying the recurrence in Lemma \ref{lm: A2_char2_recursion} by $\sigma_{0,e+1}$ and $\sigma_{1,e+1}$, we obtain in each case
    \begin{align*}
    \sigma_{0,e+1}\xi_{e+1}-\sigma_{0,e+1}\lambda_{2^e-1}\xi_e = 0 &\Longrightarrow a_{0,e+1}-a_{0,e} = 0,\\
    \sigma_{1,e+1}\xi_{e+1}-\sigma_{1,e+1}\lambda_{2^e-1}\xi_e = 3\cdot 2^e\sigma_{1,e+1} &\Longrightarrow a_{1,e+1}+a_{1,e} = 3\cdot 2^e.
    \end{align*}
    Given that $a_{0,1} = a_{1,1} = 2$ by \ref{lm: A2_char2_recursion}, we have that $a_{0,e} = 2$ and $a_{1,e} = 2^e$ for $e\geq 1$.
    \item In order to compute $a_{2^k+1,e}$ for $1\leq k\leq e-2$, we first need to compute $a_{2^{e-1}+1,e}$ for every $e\geq 2$. Using the recurrence in \ref{lm: A2_char2_recursion} again, 
    \begin{gather}
        \label{fr: base_case_product}
        a_{2^{e-1}+1,e}\sigma_{2^{e-1}+1,e} = \sigma_{2^{e-1}+1,e}\xi_{e} = \lambda_{2^{e-1}-1}\sigma_{2^{e-1}+1,e}\xi_{e-1} + 3\sigma_{2^{e-1}+1,e}\delta_{2^{e-1}}.
        \end{gather}
        
        As for the product $\sigma_{2^{e-1}+1,e}\delta_{2^{e-1}}$ above, by \ref{pr: sigmaproperties}\ref{pr: sigma_1_delta_and_lambda}, we have that 
        \begin{gather*}
            \sigma_{2^{e-1}+1,e}\delta_{2^{e-1}} = \sigma_{2^{e-1}+1,e}2^{e-1}\sigma_{1,e-1} = 2^{e-1}\sigma_{2^{e-1}+1,e},
        \end{gather*}
        using that $\sigma_{2^{e-1}+1,e}\sigma_{1,e-1} = \sigma_{2^{e-1}+1,e}$.

        About the first summand in (\ref{fr: base_case_product}), observe that for $e\geq 2$, $\lambda_{2^{e-1}-1}\sigma_{2^{e-1}+1,e} = \sigma_{2^{e-1}+1,e}$ by Lemma \ref{lm: lambda_sigma_product_e+1}, so we continue with the RHS of (\ref{fr: base_case_product}):
        \begin{gather*}
        3\sigma_{2^{e-1}+1,e}\delta_{2^{e-1}}-\sigma_{2^{e-1}+1,e}\lambda_{2^{e-1}-1}\xi_{e-1} = 3\cdot 2^{e-1}\sigma_{2^{e-1}+1,e}-\sigma_{2^{e-1}+1,e}\xi_{e-1}.
        \end{gather*}
        Finally, from \ref{pr: sigmaproperties}\ref{pr: recursive_split},
        \begin{align*}
        \sigma_{2^{e-1}+1,e}\xi_{e-1} &= \sigma_{2^{e-1}+1,e}\sum a_{s,e-1}\sigma_{s,e-1} \\
        &= \sigma_{2^{e-1}+1,e}\left(\sum_{s=0}^{2^{e-1}-1} a_{s,e-1}\sigma_{s,e} + \sum_{s=2^{e-1}}^{2^{e}-1} a_{s,e-1}\sigma_{2^{e-1}+s,e}\right) = a_{1,e-1}\sigma_{2^{e-1}+1,e}.
        \end{align*}
        We conclude from this and (\ref{fr: base_case_product}) that $a_{2^{e-1}+1,e} = 3\cdot 2^{e-1}-a_{1,e-1} = 2\cdot 2^{e-1} = 2^{e}$, and therefore,
        \begin{gather}
            \label{fm: a2e_11e2e}
        a_{2^{e-1}+1,e} = 2^{e}, e\geq 2.
        \end{gather}
    \item We now compute $a_{2^k+1,e}$ for $1\leq k\leq e-2$ and $e\geq 2$. Note that, for those $k$ and $e$, it happens again that $\sigma_{2^k+1,e+1}\delta_{2^e} = 0$, so multiplying the recurrence in \ref{lm: A2_char2_recursion} by $\sigma_{2^k+1,e+1}$, we obtain
    \begin{gather*}
        \sigma_{2^k+1,e+1}\xi_{e+1}-\sigma_{2^k+1,e+1}\lambda_{2^e-1}\xi_e = 0 \Longrightarrow a_{2^k+1,e+1}-a_{2^k+1,e} = 0.
    \end{gather*}
    This implies that for all $1\leq k\leq e-2$, $a_{2^k+1,e} = a_{2^k+1,k+1}$, and we know by the previous part (setting $e = k+1$ in (\ref{fm: a2e_11e2e})) that $a_{2^k+1,k+1} = 2^{k+1}$ for $k\geq 2$. Therefore, we conclude that
    \begin{gather*}
    a_{2^k+1,e} = 2^{k+1}, 1\leq k\leq e-2, e\geq 2.
    \end{gather*}
    \end{enumerate}
    \end{proof}
Table \ref{tab: critical_coefficients_x_cubed} summarizes the results in the previous lemmas. 
\begin{table}[h!]
    \label{tab: critical_coefficients_x_cubed}
\centering
\begin{tabular}{c|c|c}
$a_{0,e}$ & $a_{1,e}$ & $a_{2^k+1,e},1\leq k\leq e-1$ \\ \hline
$2$   & $2^e$   & $2^{k+1}$  \\
\end{tabular}
\vspace{1em}
\caption{Critical coefficients of $x^3$.}
\label{tab: critical_coefficients_of_x3}
\end{table}
\begin{theorem}
    \label{th: HK_function_A_2_char_2}
    The Hilbert--Kunz function of an $A_2$ singularity in characteristic 2 is
    \begin{gather*}
        \HK_e(S_{2,d}) = \begin{dcases}
        \frac{2^{m+1}+1}{2^{m+1}-1}2^{de}-\frac{2}{2^{m+1}-1}2^{e(m-1)} &(d=2m) ;\\
        \frac{2^m+1}{2^m}2^{de} &(d=2m-1);
    \end{dcases}
\end{gather*}
with $S_{p,d}$ as in Definition \ref{df: Qd}. In particular, the Hilbert–Kunz multiplicity of an $A_2$ singularity in characteristic 2 is
\begin{gather*}
    \eHK{S_{2,d}} = \begin{dcases}
    \frac{2^{m+1}+1}{2^{m+1}-1} & (d=2m) ;\\
    \frac{2^{m-1}+1}{2^{m-1}} & (d=2m-1) ;\\
\end{dcases}
\end{gather*}
\end{theorem}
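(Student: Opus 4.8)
The plan is to run the same $\sigma$-basis computation that produced Theorem \ref{th: HK_function_A_1_char_2}, now feeding in the critical coefficients of $\xi_e := 3\delta_{2^e/3}$. By Proposition \ref{pr: A_2_in_representation_ring} the relevant $k$-object is $\mu_e^m\xi_e$ when $d=2m$ and $\mu_e^{m-1}\cdot 2\delta_{2^{e-1}}\cdot\xi_e$ when $d=2m-1$, so $\HK_e(S_{2,d})$ is just $\alpha$ of these products. First I would insert $\mu_e = 2^e\bigl(\sigma_{0,e}+2^e\sigma_{1,e}+\sum_{k=1}^{e-1}2^k\sigma_{2^k+1,e}\bigr)$ from Proposition \ref{pr: M_e_over_sigma_basis} and raise it to the appropriate power. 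Because the $\sigma$-elements are idempotent and orthogonal (\ref{pr: sigmaproperties}\ref{pr: idem_orth}), taking powers simply raises each coefficient to that power, giving $\mu_e^m = 2^{em}\sigma_{0,e}+2^{2em}\sigma_{1,e}+\sum_{k=1}^{e-1}2^{(e+k)m}\sigma_{2^k+1,e}$.

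The key point is that, again by orthogonality, $\mu_e^m\xi_e$ only feels the coefficients of $\xi_e$ at the indices $0$, $1$ and $2^k+1$ ($1\le k\le e-1$) where $\mu_e^m$ lives; every other $\sigma$-coefficient of $\xi_e$ is annihilated. These are exactly the \emph{critical coefficients} isolated in Lemma \ref{lm: C_e_critical_coefficients}, namely $a_{0,e}=2$, $a_{1,e}=2^e$ and $a_{2^k+1,e}=2^{k+1}$ — which is precisely why that lemma only bothers to compute those. Substituting and applying $\alpha$ with $\alpha(\sigma_{k,e})=2^{-e}$ (\ref{pr: sigmaproperties}\ref{pr: alpha_of_sigma}) turns $\HK_e(S_{2,2m})$ into $2^{-e}\bigl(2^{em+1}+2^{2em+e}+\sum_{k=1}^{e-1}2^{(e+k)m+k+1}\bigr)$. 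Collapsing the geometric series $\sum_{k=0}^{e-1}2^{k(m+1)}=(2^{e(m+1)}-1)/(2^{m+1}-1)$ and simplifying produces the stated closed form $\frac{2^{m+1}+1}{2^{m+1}-1}2^{de}-\frac{2}{2^{m+1}-1}2^{e(m-1)}$, whose leading term gives the multiplicity $\frac{2^{m+1}+1}{2^{m+1}-1}$.

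For $d=2m-1$ I would first rewrite the extra factor using \ref{pr: sigmaproperties}\ref{pr: sigma_1_delta_and_lambda} together with the recursive split \ref{pr: sigmaproperties}\ref{pr: recursive_split}, so that $2\delta_{2^{e-1}}=2^e(\sigma_{1,e}+\sigma_{2^{e-1}+1,e})$. Multiplying this against $\mu_e^{m-1}$ collapses everything to the two surviving terms $2^e\bigl(2^{2e(m-1)}\sigma_{1,e}+2^{(2e-1)(m-1)}\sigma_{2^{e-1}+1,e}\bigr)$, i.e.\ it picks out the coefficients of $\mu_e^{m-1}$ at indices $1$ and $2^{e-1}+1$. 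A final multiplication by $\xi_e$ uses only $a_{1,e}=2^e$ and $a_{2^{e-1}+1,e}=2^e$ (the $k=e-1$ instance of Lemma \ref{lm: C_e_critical_coefficients}), and applying $\alpha$ gives $2^e\bigl(2^{2e(m-1)}+2^{(2e-1)(m-1)}\bigr)$, which simplifies to $\frac{2^{m-1}+1}{2^{m-1}}2^{de}$.

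The conceptual work is already packaged in Propositions \ref{pr: A_2_in_representation_ring} and \ref{pr: M_e_over_sigma_basis} and Lemma \ref{lm: C_e_critical_coefficients}; the one real obstacle is bookkeeping, namely correctly identifying which $\sigma_{k,e}$ survive each product and keeping the exponent arithmetic consistent through the geometric sum. I also note that this computation yields the odd-dimensional Hilbert--Kunz function as $\frac{2^{m-1}+1}{2^{m-1}}2^{de}$, in agreement with the displayed multiplicity $\frac{2^{m-1}+1}{2^{m-1}}$.
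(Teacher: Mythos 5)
Your proposal is correct and is essentially the paper's own argument: reduce to Proposition \ref{pr: A_2_in_representation_ring}, expand $\mu_e$ and $2\delta_{2^{e-1}}$ over the $\sigma$-basis via Proposition \ref{pr: M_e_over_sigma_basis}, use idempotency and orthogonality so that only the critical coefficients of $\xi_e$ from Lemma \ref{lm: C_e_critical_coefficients} survive, and finish with $\alpha(\sigma_{k,e})=2^{-e}$ and a geometric sum.

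Two points of comparison are worth recording. First, your bookkeeping is actually more reliable than the printed proof: the paper computes $\alpha(\mu_e^m\xi_e)$ under the heading ``$d=2m+1$'' and $\alpha(\mu_e^{m-1}2\delta_{2^{e-1}}\xi_e)$ under ``$d=2m$'', i.e.\ with the parities interchanged relative to Proposition \ref{pr: A_2_in_representation_ring}, and it also drops a factor of $2^{-e}$ in intermediate displays; you attach each $k$-object to the correct parity and your exponents stay consistent throughout. Second, your odd-dimensional conclusion $\HK_e(S_{2,2m-1})=\frac{2^{m-1}+1}{2^{m-1}}2^{de}$ is the correct one, and it exposes an off-by-one misprint in the theorem as displayed: the function $\frac{2^m+1}{2^m}2^{de}$ stated for $d=2m-1$ contradicts the multiplicity $\frac{2^{m-1}+1}{2^{m-1}}$ displayed immediately below it, and it also fails a direct check, since $\HK_1(S_{2,3})=\dim_k k[x_0,\dots,x_3]/(x_0^2,x_1^2,x_2^2,x_3^2,x_0x_1)=3\cdot 4=12=\frac{3}{2}\cdot 2^3$, whereas $\frac{5}{4}\cdot 2^3=10$. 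One citation detail you should make explicit: the values you use, $a_{1,e}=2^e$ and $a_{2^{e-1}+1,e}=2^e$, are the ones established in the proof of Lemma \ref{lm: C_e_critical_coefficients} (in particular equation (\ref{fm: a2e_11e2e})) and recorded in Table \ref{tab: critical_coefficients_of_x3}; the lemma's statement itself misprints $a_{1,e}$ as $2^{e+1}$ and restricts to $k\le e-2$, so the index $k=e-1$ that you need in the odd case is not literally an instance of that statement.
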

\begin{proof}
    We treat even- and odd-dimensional cases separately.
    
    Firstly, let $d=2m+1$. By \ref{pr: A_2_in_representation_ring}, $\HK_e(S_{3,2m+1}) = \alpha\left(\mu_e^m\xi_e\right)$. Let $\xi_e$ as in \ref{lm: C_e_critical_coefficients}, and $\mu_e$ as in \ref{pr: M_e_over_sigma_basis}, so
    \begin{align*}
        \mu_e^m\xi_e&= 2^{em}(\sigma_{0,e}+2^{em}\sigma_{1,e}+\sum_{k=1}^{e-1} 2^{km}\sigma_{2^k+1,e})\sum_{k=0}^{2^e-1} a_{k,e}\sigma_{k,e}\\
        &= 2^{em}(a_{0,e}\sigma_{0,e}+2^{em}a_{1,e}\sigma_{1,e}+\sum_{k=1}^{e-1}2^{km}a_{2^k+1,e}\sigma_{2^k+1,e}).
    \end{align*}
    Using Lemma \ref{lm: C_e_critical_coefficients}, the orthogonality properties of $\sigma$-basis, \ref{pr: sigmaproperties}, and the linearity of $\alpha$,
    \begin{align*}
        \alpha(\mu_e^m\xi_e) &= 2^{em}\alpha(a_{0,e}\sigma_{0,e}+2^{em}a_{1,e}\sigma_{1,e}+\sum_{k=1}^{e-1}2^{km}a_{2^k+1,e}\sigma_{2^k+1,e})\\
        &= 2^{em}(2\frac{1}{2^e}+2^{em}2^e\frac{1}{2^e}+\sum_{k=1}^{e-1}2^{km}2^{k+1}\frac{1}{2^e})\\
        &= 2^{em}(2+2^{e(m+1)}+2\sum_{k=1}^{e-1}2^{k(m+1)}) \\
        &= 2^{em}(-2^{e(m+1)}+2\sum_{k=0}^{e}2^{k(m+1)}) = 2^{em}(-2^{e(m+1)}+2\frac{2^{(e+1)(m+1)}-1}{2^{m+1}-1}).
    \end{align*}
    Thus,
    \begin{gather*}
        \HK_e(S_{2,d}) = 2^{em}(-2^{e(m+1)}+2\frac{2^{(e+1)(m+1)}-1}{2^{m+1}-1}) = -2^{de}+2\frac{2^{m+1}2^{de}-2^{e(m-1)}}{2^{m+1}-1},
    \end{gather*}
    from which the result follows for $d=2m+1$.
    
    Let now $d=2m$. Recall that by Proposition \ref{pr: sigmaproperties}, we have that $\delta_{2^{e-1}} = 2^{e-1}\sigma_{1,e-1} = 2^{e-1}(\sigma_{1,e}+\sigma_{2^{e-1}+1,e})$, so $2\delta_{2^{e-1}} = 2^e(\sigma_{1,e}+\sigma_{2^{e-1}+1,e})$. Now, by Proposition \ref{pr: A_2_in_representation_ring}, we have that $\HK_e(S_{3,2m}) = \alpha\left(2\mu_e^{m-1}\delta_{2^{e-1}}\xi_e\right)$, so
    \begin{align*}
        \mu_e^{m-1}2\delta_{2^{e-1}}\xi_e &= 2^{e(m-1)}(\sigma_{0,e}+2^{e(m-1)}\sigma_{1,e}+\sum_{k=1}^{e-1} 2^{k(m-1)}\sigma_{2^k+1,e})2^e(\sigma_{1,e}+\sigma_{2^{e-1}+1,e})\sum_{k=0}^{2^e-1} a_{k,e}\sigma_{k,e}\\
        &=2^{em}2^{e(m-1)}2^ea_{1,e}\sigma_{1,e} + 2^{em}2^{(e-1)(m-1)}2^ea_{2^{e-1}+1,e}\sigma_{2^{e-1}+1,e}.
    \end{align*}
    Thus, applying $\alpha$ to the above, and using Lemma \ref{lm: C_e_critical_coefficients},
    \begin{gather*}
        \HK_e(S_{2,d}) = 2^{de}2^e\frac{1}{2^e}+2^{de}\frac{1}{2^m}2^e\frac{1}{2^e} = \frac{2^m+1}{2^m}2^{de}.
    \end{gather*}
\end{proof}

\subsection{The \texorpdfstring{$A_1$ and $A_2$}{A1 and A2} singularity in characteristic 3}\label{subsec:A2_in_char_3}
Now, we compute the Hilbert–Kunz multiplicity of the $A_1$ and $A_2$ singularity in characteristic 3. Yoshida already computed the first one in \cite{yos}, so we only need to compute the second one.
\begin{theorem}
    \label{th: eHK_A_1_and_A_2_in_char_3}
    For $d\geq 2$,
    \begin{gather*}
        \eHK{R_{3,d}} = 1+\frac{3\cdot 2^d}{3^{d+1}-2^d+(-1)^d} \qandq \eHK{S_{3,d}} = 1+\left(\frac{2}{3}\right)^{d-1}
    \end{gather*}
\end{theorem}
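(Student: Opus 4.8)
The two formulas are logically independent, so I would treat the quadric $R_{3,d}\cong k[[x_0,\ldots,x_d]]/(x_0^2+\cdots+x_d^2)$ and the cubic $S_{3,d}\cong k[[x_0,\ldots,x_d]]/(x_0^2+\cdots+x_{d-1}^2+x_d^3)$ separately, both as diagonal hypersurfaces in characteristic $3$.

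For $R_{3,d}$ the point is that Theorem \ref{th: yoshida_theorem_mu_1} already holds for every $p>2$, hence for $p=3$, so it suffices to evaluate the two invariants appearing there inside $\Lambda_1$. By the diagonal splitting, $\mu_1(R_{3,d})=\gamma^{d+1}$ with $\gamma=\delta_1+\delta_2=2\lambda_0-\lambda_1$ (Proposition \ref{pr: gamma_formula} with $a=1$), so $\HK_1(R_{3,d})=\alpha(\gamma^{d+1})$ and $\ell^\sharp(R_{3,d})=\alpha(\lambda_1^{d+1})$. Writing $\gamma^{n}=A_n\lambda_0+B_n\lambda_1+C_n\lambda_2$ and likewise for $\lambda_1^{n}$, the characteristic-$3$ multiplication rules of Theorem \ref{lm: structure_Lambda_1} turn each into a linear recurrence; solving them gives $\alpha(\gamma^{n})=A_n=3^{n-1}+2^{n-1}$ and $\alpha(\lambda_1^{n})=\bigl(2^{n-1}-(-1)^{n-1}\bigr)/3$. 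Substituting $\HK_1(R_{3,d})=3^d+2^d$ and $\ell^\sharp(R_{3,d})=\bigl(2^d-(-1)^d\bigr)/3$ into Theorem \ref{th: yoshida_theorem_mu_1} and clearing denominators yields $\eHK{R_{3,d}}=1+\tfrac{3\cdot 2^d}{3^{d+1}-2^d+(-1)^d}$.

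For $S_{3,d}$ the situation is different, and the first task is to explain why Theorem \ref{th: yoshida_theorem_mu_1} is unavailable: since $1/3$ terminates in base $3$, one has $3^{\mu}\cdot\tfrac13\in\mathbb Z$, so the last coordinate of $z'$ is $0$ while that of $z$ is $1/3$, and (as $z=z^*$ here) neither $z'\sim z$ nor $z'\sim z^*$ can hold for any period $\mu$; thus Theorem \ref{th: th53} genuinely fails. I would therefore compute the Hilbert–Kunz function directly. By Lemma \ref{lm: lemmarepring} and the diagonal splitting, $\mu_e(S_{3,d})=\gamma_e^{\,d}\,\eta_e$ with $\gamma_e=(k[x]/(x^{3^e}),x^2)$ and $\eta_e=(k[x]/(x^{3^e}),x^3)$; the decisive simplification is that the cube factor degenerates to $\eta_e=3\delta_{3^{e-1}}$ exactly (the action $x^3$ has orbits of length $3^{e-1}$, cf.\ the remark after Corollary \ref{cor: delta_basis_is_a_basis}). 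Hence $\HK_e(S_{3,d})=3\,\alpha(\gamma_e^{\,d}\delta_{3^{e-1}})$, and for $e=1$ the factor $\delta_{3^0}=\delta_1$ is the unit, so $\HK_1(S_{3,d})=3\alpha(\gamma^{d})=3(3^{d-1}+2^{d-1})$ from the first part.

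The heart of the argument, and the step I expect to be the main obstacle, is the self-similarity $\HK_e(S_{3,d})=3^d\HK_{e-1}(S_{3,d})$, equivalently $\alpha(\gamma_e^{\,d}\delta_{3^{e-1}})=3^d\alpha(\gamma_{e-1}^{\,d}\delta_{3^{e-2}})$. Granting it, the Hilbert–Kunz function is the pure power $\HK_e(S_{3,d})=3^{d(e-1)}\HK_1(S_{3,d})$, whence $\eHK{S_{3,d}}=\lim_e \HK_e/3^{de}=\HK_1(S_{3,d})/3^{d}=1+(2/3)^{d-1}$. To prove the self-similarity I would derive a recursion expressing $\gamma_e$ through $\gamma_{e-1}$ in $\Gamma$ — the characteristic-$3$ analogue of Corollary \ref{cor: A1_recursion_char_2} — using the filtration $\Lambda_{e-1}\subset\Lambda_e$ and the $p$-adic splitting of Corollary \ref{cr: p_adic_expansion_rep_ring}, and then control the interaction with multiplication by $\delta_{3^{e-1}}$ via the relations $\delta_p^2=p\delta_p$ (Proposition \ref{pr: delta_p_squared}) and $(-1)^r\lambda_r\delta_p=\delta_p$ already exploited in Lemma \ref{lm: gamma_power_recursive_formula}. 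A concrete alternative, which I have verified in low dimension, is to solve the square relation $x_0^2=-(x_1^2+\cdots+x_{d-1}^2+x_d^3)$, exhibiting $S_{3,d}/\m^{[3^e]}$ as $\bigl(P_e/w^{(q-1)/2}P_e\bigr)\oplus\bigl(P_e/w^{(q+1)/2}P_e\bigr)x_0$ with $P_e=k[x_1,\ldots,x_d]/(x_i^{q})$, $q=3^e$, and $w$ the $S_{3,d-1}$-form; summing these two colengths is then the remaining technical computation.
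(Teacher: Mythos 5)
Your treatment of $R_{3,d}$ is correct and complete: Theorem \ref{th: yoshida_theorem_mu_1} does hold at $p=3$ for the quadric (Lemma \ref{conditions} covers $\beta=(1/2,\dots,1/2)$ for all $p>2$), and your $\Lambda_1$ recurrences give $\HK_1(R_{3,d})=3^d+2^d$ and $\ell^\sharp(R_{3,d})=\bigl(2^d-(-1)^d\bigr)/3$, which indeed produce the stated formula. This is in fact more self-contained than the paper, which at this point simply cites Yoshida's unpublished notes for $\eHK{R_{3,d}}$. Your observation that Theorem \ref{th: th53} is inapplicable to $S_{3,d}$ — the last coordinate of $z'$ is $0$ while that of $z$ (and of $z^*$) is $1/3$, and $1-1/3\neq 0$, so no period can work — is also correct and correctly motivates a change of method.

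The gap is exactly the step you flag as the main obstacle: the self-similarity $\HK_e(S_{3,d})=3^d\,\HK_{e-1}(S_{3,d})$ is never proved, only asserted with a sketch of tools. Your first suggested route (a characteristic-$3$ analogue of Corollary \ref{cor: A1_recursion_char_2}) is doubtful as stated, because the $\sigma$-basis machinery rests on $\lambda_{2^j}^2=\lambda_0$ (Lemma \ref{cr: lambdaqsquared}), a characteristic-$2$ fact with no characteristic-$3$ counterpart, so the relations you cite are not obviously enough to control $\gamma_e^{\,d}\delta_{3^{e-1}}$. Your second route, eliminating $x_0$ via $x_0^2=-w$, leaves the colengths $\dim_k P_e/w^{(q\pm 1)/2}P_e$ as ``the remaining technical computation,'' and these are themselves nontrivial Hilbert--Kunz-type quantities. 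The paper closes this gap by eliminating the \emph{cube} variable instead: $k[[x_0,\dots,x_{d-1},x_d^3]]\subset k[[x_0,\dots,x_d]]$ is free of rank $3$, all generators of the relevant ideal lie in the subring, and there the relation is \emph{linear} in $y=x_d^3$, so
\begin{gather*}
\HK_e(S_{3,d}) = 3\dim_k \frac{k[[x_0,\dots,x_{d-1}]]}{(x_0^{3^e},\dots,x_{d-1}^{3^e},(x_0^2+\dots+x_{d-1}^2)^{3^{e-1}})}.
\end{gather*}
Frobenius in characteristic $3$ rewrites $(x_0^2+\dots+x_{d-1}^2)^{3^{e-1}}$ as a quadric in the $x_i^{3^{e-1}}$, and a second free extension, of rank $3^{d(e-1)}$, yields $\HK_e(S_{3,d})=3\cdot 3^{d(e-1)}\HK_1(R_{3,d-1})$ exactly, for every $e\geq 1$. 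This closed form is precisely your self-similarity (and more), and note that $\HK_1(R_{3,d-1})=3^{d-1}+2^{d-1}$ already follows from your own first part, so the whole proof can be made self-contained; but without some such argument your derivation of $\eHK{S_{3,d}}=1+(2/3)^{d-1}$ is incomplete.
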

\begin{proof}
    The Hilbert–Kunz multiplicity of $R_{3,d}$ was already computed in \cite{yos}, which he computes using Corollary \ref{cor: yoshida_theorem_mu_1}.
    
    To compute $\eHK{S_{3,d}}$, we will use basically the same idea that Yoshida uses to compute $\eHK{R_{2,d}}$ for even–dimensional $d$: first note that $k[[x_0,\dots,x_{d-1},x_d^3]]\subset k[[x_0,\dots,x_d]]$ is a free extension of rank $3$. Thus, the following chain of inequalities follows:
    \begin{align*}
    \HK_e(S_{3,d}) &=\dim_k \frac{k[[x_0,\dots,x_d]]}{(x_0^{3^e},\dots,x_d^{3^e})+(x_0^2+\dots+x_{d-1}^2+x_d^3)}\\
    &= 3\cdot \dim_k \frac{k[[x_0,\dots,x_d^3]]}{(x_0^{3^e},\dots,x_d^{3^e})+(x_0^2+\dots+x_{d-1}^2+x_d^3)}\\
    &= 3\cdot \dim_k \frac{k[[x_0,\dots,x_{d-1}]]}{(x_0^{3^e},\dots,x_{d-1}^{3^e},(x_0^2+\dots+x_{d-1}^2)^{3^{e-1}})},
    \intertext{and now, since $k[[x_0^{3^{e-1}},\dots,x_{d-1}^{3^{e-1}}]]\subset k[[x_0,\dots,x_{d-1}]]$ is also a free extension of rank $3^{d(e-1)}$,}
    \HK_e(S_{3,d}) &=3\cdot 3^{(e-1)d}\cdot \dim_k \frac{k[[x_0^{3^{e-1}},\dots,x_{d-1}^{3^{e-1}}]]}{(x_0^{3^e},\dots,(x_0^2+\dots+x_{d-1}^2)^{3^{e-1}})}\\
    &= 3\cdot 3^{(e-1)d}\cdot \dim_k \frac{k[[y_0,\dots,y_{d-1}]]}{(y_0^3,\dots,y_{d-1}^3)+(y_0^2+\dots+y_{d-1}^2)} = 3\cdot 3^{(e-1)d}\cdot \HK_1(R_{3,d-1}).
    \end{align*}
    In the mentioned Yoshida's notes, it is proven that $\HK_1(R_{3,d-1}) = 2^{d-1}+3^{d-1}$, so
    \begin{gather*}
    \eHK{S_{3,d}} = \frac{3\cdot 3^{d(e-1)}\cdot (2^{d-1}+3^{d-1})}{3^{de}} = 1+\left(\frac{2}{3}\right)^{d-1}
    \end{gather*}

\end{proof}

\begin{remark}
In general, one can use the same trick to compute the Hilbert–Kunz function of the $A_{p^e-1}$ singularity in charactersitic $p$: one obtains that the Hilbert–Kunz multiplicity of $\mathbb{F}_p[[x_0,\dots,x_d]]/(x_0^2+\dots+x_{d-1}^2+x_d^{p^e})$ at the maximal ideal is $\frac{\HK_e(R_{p,d-1})}{p^{(d-1)e}}$. Even further, the Hilbert–Kunz multiplicity at the maximal ideal of $\mathbb{F}_p[[x_0,\dots,x_d]]/(x_0^{e_0}+\dots+x_{d-1}^{e_{d-1}}+x_d^{p^e})$ would be $\HK_e\left(\mathbb{F}_p[[x_0,\dots,x_{d-1}]]/(x_0^{e_0}+\dots+x_{d-1}^{e_{d-1}})\right)/p^{(d-1)e}$.
\end{remark}

\subsection{The main inequality, \texorpdfstring{$p=2,3$}{p23}}\label{subsec: main_inequality_char_2_and_3}
We can now settle the problem in characteristics 2 and 3. First, let us show that the computations above show that the Hilbert–Kunz multiplicity of the $A_1$ and $A_2$ singularities are different: 
\begin{theorem}[Main inequality in characteristic 2]
    \label{th: A_1_A_2_ineq_char_2}
    For $d\geq 2$, $\eHK{S_{2,d}}>\eHK{R_{2,d}}$.
\end{theorem}
\begin{proof}
Let $d=2m-1$. Then, from Theorems \ref{th: HK_function_A_1_char_2} and \ref{th: HK_function_A_2_char_2},
\begin{gather*}
    \eHK{R_{2,d}}<\eHK{S_{2,d}} \Leftrightarrow \frac{2^m}{2^m-1}<\frac{2^{m-1}+1}{2^{m-1}} \Leftrightarrow 2^d<2^d+2^{m-1}-1,
\end{gather*}
which is true for every $m>1$. If $d = 2m$, from \textit{loc. cit.}
\begin{gather*}
    \eHK{R_{2,d}}<\eHK{S_{2,d}} \Leftrightarrow \frac{2^m+1}{2^m}<\frac{2^{m+1}+1}{2^{m+1}-1} \Leftrightarrow 2\cdot 2^d + 2^m-1<2\cdot 2^d+2^{m},
\end{gather*}
which is again true for every $m\geq 1$.
\end{proof}

\begin{theorem}[Main inequality in characteristic 3]
    \label{th: A_1_A_2_ineq_char_3}
    For $d\geq 2$, $\eHK{S_{3,d}}>\eHK{R_{3,d}}$.
\end{theorem}
\begin{proof}
By Theorem \ref{th: eHK_A_1_and_A_2_in_char_3}, we have to compare the terms $(2/3)^{d-1}$ and $3\cdot 2^d/(3^{d+1}-2^d+(-1)^d)$. Now,
    \begin{gather*}
        (2/3)^{d-1}> 3\cdot 2^d/(3^{d+1}-2^d+(-1)^d)\Leftrightarrow 3^{d+1}-2^d+(-1)^d > 3^d\cdot 2 \Leftrightarrow 3^d-2^d+(-1)^d >0,
    \end{gather*}
    where the second and last equivalence hold because $3^d-2^d+(-1)^d>0$ for all $d\geq 2$.
\end{proof}

We finish the section with a brief observation. A corollary to the formula of $\eHK{S_{3,d}}$ and $\eHK{R_{2,d}}$ is that Conjecture 2.7 in \cite{jnskw} is only true up to dimension $7$ for complete intersections, and false in general, $\eHK{S_{3,d}}$ being a counterexample:
\begin{corollary}
    \label{cor: better_char_free_lower_bound}
    $\eHK{S_{3,d}}<\eHK{R_{2,d}}$ if and only if $d\geq 8$ and even or $d\geq 13$ and odd.
\end{corollary}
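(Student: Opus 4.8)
The plan is to substitute the closed forms from Theorems \ref{th: HK_function_A_1_char_2} and \ref{th: eHK_A_1_and_A_2_in_char_3}, write both multiplicities in the shape $1 + (\text{positive term})$, and reduce the claimed strict inequality $\eHK{S_{3,d}}<\eHK{R_{2,d}}$ to an elementary comparison of powers of $2$ and $3$. Since $\eHK{R_{2,d}}$ is given by different formulas according to the parity of $d$, I would treat the even and odd cases separately, and in each case clear denominators to obtain a clean integer inequality, then settle it by a monotonicity argument together with a check of the boundary values.

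For the even case, write $d=2m$ with $m\geq 1$. Then $\eHK{R_{2,d}} = 1 + 2^{-m}$ and $\eHK{S_{3,d}} = 1 + (2/3)^{2m-1}$, so the inequality is equivalent, after clearing denominators, to $2^{3m-1}<3^{2m-1}$. I would note that the ratio of consecutive values of $3^{2m-1}/2^{3m-1}$ is exactly $9/8>1$, hence this quantity is strictly increasing in $m$; as it equals $243/256<1$ at $m=3$ and $2187/2048>1$ at $m=4$, the inequality holds precisely for $m\geq 4$, that is, $d\geq 8$.

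For the odd case, write $d=2m-1$ with $m\geq 2$. Then $\eHK{R_{2,d}} = 1 + 1/(2^m-1)$, and the inequality becomes $2^{2m-2}(2^m-1)<3^{2m-2}$, equivalently $g(m):=3^{2m-2}/\bigl(2^{2m-2}(2^m-1)\bigr)>1$. Here the correction factor $2^m-1$ makes the ratio $g(m+1)/g(m)=\tfrac{9}{4}\cdot\tfrac{2^m-1}{2^{m+1}-1}$ depend on $m$; I would verify that this ratio exceeds $1$ exactly once $m\geq 3$ (it increases to the limit $9/8$ from below), so that $g$ is strictly increasing for $m\geq 3$. Evaluating $g(6)=59049/64512<1$ and $g(7)=531441/520192>1$, together with $g(2),g(3)<1$, then pins the threshold at $m\geq 7$, i.e. $d\geq 13$.

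The main obstacle is the odd case: the factor $2^m-1$ destroys the exact geometric behavior of the ratio that makes the even case immediate, so the crux is to confirm that $g$ is eventually strictly monotone and to locate the crossing precisely enough to justify the bound $d\geq 13$ rather than a neighboring value. Once the monotonicity of $g$ on the relevant range and the two boundary evaluations $g(6)<1<g(7)$ are in hand, the stated equivalence follows.
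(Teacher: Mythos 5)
Your proposal is correct and matches the paper's approach: the paper's proof is precisely the ``direct computation'' with the formulas of Theorems \ref{th: eHK_A_1_and_A_2_in_char_3} and \ref{th: HK_function_A_1_char_2}, which you carry out in full, reducing to $2^{3m-1}<3^{2m-1}$ (even case) and $2^{2m-2}(2^m-1)<3^{2m-2}$ (odd case) and locating the thresholds $d\geq 8$ and $d\geq 13$ via monotonicity plus boundary checks. Note that you correctly used the formula $\eHK{S_{3,d}}=1+(2/3)^{d-1}$ from the body of the paper rather than the (swapped, erroneous) version stated in the introduction.
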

\begin{proof}
    It follows from direct computation with the formula in Theorems \ref{th: eHK_A_1_and_A_2_in_char_3} and \ref{th: HK_function_A_1_char_2}.
\end{proof}
\begin{remark}
This shows that $\eHK{R_{2,d}}$ is a characteristic-free lower bound for Hilbert–Kunz multiplicity in dimension up to $7$ that is sharper than $\lim_{p\rightarrow \infty} \eHK{R_{p,d}}$.
\end{remark}
\section{The Strong Watanabe–Yoshida conjecture for complete intersections}\label{sec: wyc_revisited}
The inequalities proven in the previous sections already settle the Strong Watanabe–Yoshida conjecture for hypersurfaces in characteristic $p>3$. As was already mentioned in the introduction, obtaining the result for complete intersections in every charactersitic requires refining Enescu and Shimomoto's Theorem \ref{th: enescu_shimomoto_intro}. 

In §\ref{subsec: usc_and_wp}, we recall the two main results we need for this. First, in §\ref{subsec:enescu_shimomoto_char_2_3}, we will prove the result for hypersurfaces in the remaining characteristics 2 and 3 cases, and next in §\ref{subsec:shorter_and_stronger_proof}, we generalize the statement from hypersurfaces to complete intersections. In addition, this result provides a shorter proof to Theorem \ref{th: enescu_shimomoto_intro}.
\subsection{Dense upper semi-continuity and Weierstrass' Preparation Lemma}\label{subsec: usc_and_wp}
First, we recall a dense upper semi-continuity result by Enescu and Shimomoto, which we will use throughout the section.
\begin{theorem}[Enescu–Shimomoto, \cite{enshi}, Proposition 4.2]
    \label{th: usc}
    Let $k$ be an uncountably infinite field of characteristic $p>0$. Let $A= k[[x_1, ..., x_n]]$ and
    $R := A/(f_1,... ,f_\ell)$ a complete intersection ring and $f, g \in A$ such that they form a regular sequence in $R$. Let $0 \neq h \in R$. Then there
    exist a dense subset $V \subset k$ such that $ah + f, g$ form a regular sequence on $R$ and
    \begin{gather*}
    \eHK{\frac{R}{(f,g)}}\geq \eHK{\frac{R}{(ah+f,g)}}
    \end{gather*}
    for all $a \in V$.
\end{theorem}

We now recall the Weierstrass preparation lemma for reference. We will use it in the final part of this section for $(A,\m) = k[[x_0,\dots,x_{d+1}]]$ and $(B,\n) = k[[x_0,\dots,x_{d}]]$, which are both complete Noetherian local rings.
\begin{definition}
Given $(B,\n)$ a complete local Noetherian ring, we say that $\varphi\in B[[x]] = A$ is a distinguished polynomial on the variable $x$ if for some $n$,
\begin{gather*}
\varphi = x^n+b_1x^{n-1}+\dots+b_{n-1}x+b_n,
\end{gather*}
where $b_i \in \n\subset B$.
\end{definition}
\begin{theorem}[Weierstrass' Preparation Lemma, \cite{ge83}]
    \label{th: wp}
    Let $(B,\n)$ be a complete local Noetherian ring. Let $F = \sum b_ix^i\in B[[x]]$ where $b_i\in B$. If $b_i \in \m$ for all $i<n$ and $b_n\notin \m$, then
    \begin{gather*}
    F = u\varphi,
    \end{gather*}
    where $u\in B[[x]]$ is a unit, and $\varphi$ is a distinguished polynomial on $x$ of degree $n$.
\end{theorem}

\subsection{The hypersurface case in characteristics 2 and 3}\label{subsec:enescu_shimomoto_char_2_3}

In this section, we will extend Theorem 3.4 in \cite{enshi} to characteristics 2 and 3, which, together with Theorems \ref{th: A_1_A_2_ineq_char_2} and \ref{th: A_1_A_2_ineq_char_3}, yield the Strong Watanabe–Yoshida Conjecture for complete intersections in these two cases. Enescu and Shimomoto's argument to settle this problem in characteristic $p>2$ relies on Lemma 3.1 in \cite{enshi}, which is a lemma to \say{complete squares} in a power series ring over a field whose characteristic is not 2. However, that lemma does not work in characteristic 2. Nevertheless, in this section, we prove an alternative method for completing squares in characteristic 2 that addresses this issue.

\begin{notation}
    We will use multi-index notation in the following lemmas, so we will write $f = \sum f_\ue x_1^{e_1}\dots x_n^{e_n}$ where $\ue:=(e_1,\dots,e_n)$, and we define $|\ue| = \sum e_i$ the norm of the multi-index.
\end{notation}

\begin{lemma}
\label{lm: quadratic_equation_power_series_char_2}\label{lm: units_cubic_roots_in_char2}
    Let $k$ be an algebraically closed field of characteristic 2 and $v,w\in k[[x_0,\dots,x_d]]$, where $v$ is a unit. Then,
    \begin{enumerate}[label=(\alph*)]
    \item There exists $t\in k[[x_0,\dots,x_d]]$ such that $t^2+vt+w = 0$. Moreover, if $w$ is a unit, then $t$ also is.
    \item There exists a unit $s\in k[[x_0,\dots,x_d]]$ such that $s^3 = v$.
    \end{enumerate}
\end{lemma}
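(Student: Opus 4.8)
The plan is to obtain both elements from Hensel's lemma, using that $R := k[[x_0,\dots,x_d]]$ is a complete Noetherian local ring, hence Henselian, with maximal ideal $\mathfrak{m} = (x_0,\dots,x_d)$ and residue field $k = \bar k$. The observation that makes characteristic $2$ favorable (rather than obstructive) for both parts is that the relevant formal derivatives are units at the approximate roots: for a monic quadratic $t^2 + vt + w$ the derivative is $2t + v = v$, a unit, and for $s^3 - v$ the derivative is $3s^2 = s^2$, which is nonzero away from the origin. This nonvanishing is precisely what \say{completing the square} accomplishes in odd characteristic, and it is the only point that genuinely requires care.

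For part (a), I would set $F(t) = t^2 + vt + w \in R[t]$ and reduce modulo $\mathfrak{m}$ to get $\bar F(t) = t^2 + \bar v\, t + \bar w \in k[t]$, where $\bar v \neq 0$ since $v$ is a unit. As $k$ is algebraically closed, $\bar F$ has a root $\bar t_0 \in k$; viewing $\bar t_0$ in the coefficient field $k \subset R$ gives $a_0 \in R$ with $F(a_0) \equiv \bar F(\bar t_0) \equiv 0 \pmod{\mathfrak{m}}$. Since $F'(a_0) = 2a_0 + v = v$ is a unit, Hensel's lemma produces the desired $t \in R$ with $F(t) = 0$. For the \say{moreover} clause, I would rewrite $F(t) = 0$ as $t(t+v) = w$ (using $-w = w$ in characteristic $2$); if $w$ is a unit, then $t(t+v)$ is a unit, and since any factor of a unit is a unit, $t$ is a unit.

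For part (b) the argument is parallel: set $G(s) = s^3 - v$ and reduce to $\bar G(s) = s^3 - \bar v$. Algebraic closure provides a cube root $\bar s_0$ of $\bar v$, and $\bar s_0 \neq 0$ because $\bar v \neq 0$; then $G'(\bar s_0) = 3\bar s_0^2 = \bar s_0^2$ is a unit in $k$, so Hensel lifts $\bar s_0$ to $s \in R$ with $s^3 = v$ and $s \equiv \bar s_0 \pmod{\mathfrak{m}}$, whence $s$ is a unit.

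I do not expect a serious obstacle once Henselianity is invoked; the substantive check is simply that the derivatives $v$ and $s^2$ are units, which is where the characteristic-$2$ cancellation $2t + v = v$ (and the nonvanishing of $3 = 1$ in the cubic case) does the work. As an alternative to citing Hensel's lemma, I could construct $t$ (resp.\ $s$) by successive approximation, solving for the homogeneous components of increasing degree; the nonvanishing derivative guarantees that at each step the linear equation for the next component is uniquely solvable. Since this is just Hensel's lemma unwound, I would prefer the clean formulation above.
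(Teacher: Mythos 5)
Your proof is correct, and it takes a different route from the paper: you invoke Hensel's lemma as a black box, whereas the paper never mentions Hensel and instead carries out the argument by hand, writing $t = v^{-1}(t^2+w)$ and solving the resulting system of coefficient equations by induction on the norm of the multi-index (and similarly for the cubic-root equations $\sum_{\underline{i}+\underline{j}+\underline{k}=\underline{e}} a_{\underline{i}}a_{\underline{j}}a_{\underline{k}} = b_{\underline{e}}$ in part (b)). The two arguments are the same in essence --- the paper's induction is exactly the ``Hensel's lemma unwound'' alternative you mention at the end, with the key characteristic-$2$ cancellation appearing there as the fact that the two terms $u_0 t_0 t_{\underline{e}}$ on the right-hand side cancel each other, which is the coefficient-level shadow of your observation that $F'(t) = 2t+v = v$ is a unit; both proofs also consume algebraic closure in the same place, namely to solve the degree-zero (residue-field) equation. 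What your formulation buys is brevity and a transparent explanation of \emph{why} characteristic $2$ helps rather than hurts, plus an explicit treatment of the ``moreover'' clause via $t(t+v)=w$, which the paper leaves implicit; what the paper's formulation buys is self-containedness (no appeal to Henselianity) and, arguably, a proof pattern that makes the inductive solvability visible in the exact form reused elsewhere in that section. One small point of care in your version: you should say explicitly that the root $\bar t_0$ of the reduced quadratic is simple because $\bar F' = \bar v \neq 0$ (and likewise $\bar G'(\bar s_0) = \bar s_0^2 \neq 0$), which you do --- that is precisely the hypothesis Hensel's lemma needs, so there is no gap.
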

\begin{proof}
    We first prove (a). Let $u=v^{-1}$. A solution to the equation exists if and only if $t$ exists such that $t = v^{-1}(t^2+w) = u(w+t^2)$, which is the case if and only if the following set of equations has a solution over $k$:
    \begin{gather}
        \label{fr: coefficients_of_t}
        t_{\ue} = \sum_{\ue'+\ue'' = \ue} u_{\ue'}(w_{\ue''}+\sum_{\uf+\uf'=\ue''} t_{\uf}t_{\uf'}).
    \end{gather}
    Observe that $t_\ue$ only appears on the RHS only if $\uf = 0, \uf' = \ue$ or $\uf = \ue$ and $\uf' = 0$. In both these cases, we have that $t_{\uf}t_{\uf'} = t_0t_{\ue}.$ Because $k$ has characteristic 2, these two terms cancel each other out. Thus, for every $\ue$, the expression on the right-hand side depends on coefficients of $t$ of monomials of strictly lower degree, i.e., it depends on coefficients $t_{\uf},t_{\uf'}$ such that $|\uf|,|\uf'|< |\ue|.$

    The equation for $\ue = 0$ has a solution because the field is algebraically closed, so it follows by induction on the norm of the multi-index that the set of equations (\ref{fr: coefficients_of_t}) has a solution.

    Let us now prove (b). Let $u = \sum b_{\underline{e}} x_0^{e_0}\dots x_d^{e_d}$. There exists a series $s = \sum a_{\underline{e}} x_0^{e_0}\dots x_d^{e_d}$ such that $s^3 = u$ if and only if the following systems of equations have a solution:
\begin{gather}
    \label{fr: equations_cubic_root_char_2}
    \sum_{\underline{i}+\underline{j}+\underline{k} = \ue} a_{\underline{i}}a_{\underline{j}}a_{\underline{k}} = b_{\ue}, \ue \in \mathbb{Z}^{d+1}_{\geq 0}.
\end{gather}
The fact that these systems of equations have a solution in $k$ follows from induction on the norm of the multi-index $\ue$. 
\end{proof}

The following lemma is the characteristic 2 analog of Lemma 3.1 in \cite{enshi}, used for completing squares in characteristic not 2.
\begin{lemma}[Completing squares in characteristic 2]
\label{lm: power_series_completing_squares_in_char_2}
    Let $k$ be an algebraically closed field of characteristic 2 and put $A = k[[x_2,\dots,x_d]]$. Consider $B = A[[x_0,x_1]]$ and $F = x_0x_1 + Q(x_2,\dots,x_d) + G$ with $Q\in A$ a quadratic form and $G\in \mathfrak{m}_B^3$, where $\mathfrak{m}_B$ is the maximal ideal of $B$. Then there exist $a_0,a_1,b_0,b_1 \in B$ units, $a_2,b_2\in (x_2,\dots,x_d)B$ and $G_1\in (x_2,\dots,x_d)^3B$ such that 
    \begin{gather*}
        F = (a_0x_0+a_1x_1+a_2)(b_0x_0+b_1x_1+b_2)+Q(x_2,\dots,x_d) + G_1.
    \end{gather*}
\end{lemma}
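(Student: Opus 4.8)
The plan is to subtract $Q$ and reduce the statement to a factorization problem modulo $(x_2,\dots,x_d)^3B$, which I would then solve by successive approximation in the $\m_B$-adic topology. Since $B=A[[x_0,x_1]]=k[[x_0,\dots,x_d]]$, it suffices to find $L_1=a_0x_0+a_1x_1+a_2$ and $L_2=b_0x_0+b_1x_1+b_2$ of the required shape with $L_1L_2\equiv x_0x_1+G \pmod{(x_2,\dots,x_d)^3B}$: once this congruence holds one simply sets $G_1:=F-Q-L_1L_2$, and then $G_1\in(x_2,\dots,x_d)^3B$ by construction. The key observation driving everything is that a monomial in $x_0,\dots,x_d$ lies in $(x_0)+(x_1)+(x_2,\dots,x_d)^3$ \emph{unless} it is a monomial purely in $x_2,\dots,x_d$ of degree $\le 2$; since $G\in\m_B^3$ has no such low-degree pure part, the obstruction to the factorization will vanish at every stage.

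I would build $L_1$ and $L_2$ one total degree at a time. Write $L_1=x_0+\sum_{n\ge 2}u_n$ and $L_2=x_1+\sum_{n\ge 2}v_n$ with $u_n,v_n$ homogeneous of degree $n$, and $G=\sum_{n\ge 3}G^{(n)}$. Comparing the degree-$N$ parts of $L_1L_2$ and $x_0x_1+G-G_1$ for $N\ge 3$ yields an equation
\[
    x_0v_{N-1}+x_1u_{N-1}+G_1^{(N)}=G^{(N)}-W_N,\qquad W_N:=\sum_{\substack{i+j=N\\ i,j\ge 2}}u_iv_j,
\]
where $W_N$ depends only on the $u_i,v_j$ already constructed in lower degrees. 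The right-hand side is a known homogeneous polynomial of degree $N$, and I would solve for $u_{N-1},v_{N-1},G_1^{(N)}$ simply by sorting its monomials: those divisible by $x_0$ go into $x_0v_{N-1}$, those divisible by $x_1$ but not $x_0$ go into $x_1u_{N-1}$, and the remaining monomials---pure in $x_2,\dots,x_d$ and of degree $N\ge 3$---go into $G_1^{(N)}\in(x_2,\dots,x_d)^3$. This determines $u_{N-1},v_{N-1},G_1^{(N)}$, and since one homogeneous degree is fixed at each stage the series $L_1,L_2,G_1$ converge $\m_B$-adically to genuine elements of $B$ with $L_1L_2=x_0x_1+G-G_1$.

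Finally I would read off the coefficient structure. By construction each $u_{N-1}$ involves no $x_0$, so $L_1\in x_0+k[[x_1,\dots,x_d]]$ with all corrections of degree $\ge 2$; writing $L_1=a_0x_0+a_1x_1+a_2$ this forces $a_0$ to be a unit and $a_2\in(x_2,\dots,x_d)B$, and symmetrically $L_2$ gives $b_1$ a unit and $b_2\in(x_2,\dots,x_d)B$ (the coefficients $a_1,b_0$ necessarily landing in $\m_B$, since the hyperbolic leading form $x_0x_1$ cannot be the product of two linear forms both nonzero at the origin in each variable). The point I would stress is that this argument is entirely characteristic-free: it never divides by $2$ nor completes a square, which is precisely why it survives in characteristic $2$ where the completion-of-squares step of \cite{enshi} breaks down. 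The only genuine work---and the step most prone to error---is the degree-$N$ bookkeeping, namely checking that after subtracting $W_N$ the leftover pure $(x_2,\dots,x_d)$-monomials always have degree $\ge 3$, which is exactly what $G\in\m_B^3$ guarantees (note $W_N$ itself has order $\ge 4$). If one instead insisted on an exact factorization with the coefficient of $x_0x_1$ normalized, one could extract the relevant unit via Lemma \ref{lm: quadratic_equation_power_series_char_2}(a), but the degree-by-degree method makes this unnecessary.
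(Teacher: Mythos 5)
Your degree-by-degree construction is sound, and it is a genuinely different route from the paper's. The paper's proof first performs a linear change of variables turning the quadratic part $x_0x_1$ into $x^2+xy+y^2$ (possible because $k=\bar{k}$ contains a primitive cube root of unity $\omega$, and $x^2+xy+y^2=(x+\omega y)(x+\omega^2 y)$), then writes $G = c+ax+by+Ax^2+Bxy+Cy^2$ with $c\in(x_2,\dots,x_d)^3$ and $a,b\in(x_2,\dots,x_d)^2$, and finally produces the factorization in closed form from a unit root $t$ of $ut^2+vt+w=0$, where $u=1+A$, $v=1+B$, $w=1+C$, supplied by Lemma \ref{lm: quadratic_equation_power_series_char_2}. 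Your successive-approximation argument needs neither the preliminary coordinate change nor that lemma (nor, in fact, algebraic closedness of $k$), which is a real economy.

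But precisely because you skip the coordinate change, your construction does not deliver the conclusion as stated: the lemma demands that all four of $a_0,a_1,b_0,b_1$ be units, while your $L_1,L_2$ have $a_0,b_1$ units and $a_1,b_0\in\mathfrak{m}_B$. Your parenthetical remark that this is unavoidable is correct: comparing degree-two parts in $L_1L_2=x_0x_1+G-G_1$ forces $\bar{a}_0\bar{b}_0=\bar{a}_1\bar{b}_1=0$ and $\bar{a}_0\bar{b}_1+\bar{a}_1\bar{b}_0=1$ (bars denoting constant terms), so one coefficient in each factor must lie in $\mathfrak{m}_B$. In other words, the statement is literally unattainable for $F$ with quadratic part $x_0x_1$; the paper's own proof secures the all-units conclusion only for the changed-variable normal form $x^2+xy+y^2+Q+G$, whose quadratic part does factor into two linear forms with all coefficients nonzero --- that is exactly what its opening sentence ``we can assume \dots after a change of variables'' is doing, and it is the step your argument omits (or, read the other way, the imprecision in the statement that your argument exposes). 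The saving grace is that the sole consumer of this lemma, Lemma \ref{lm: change_of_variables_xy}, uses only the invertibility of the matrix with rows $(a_0,a_1)$ and $(b_0,b_1)$, i.e.\ that $a_0b_1+a_1b_0$ is a unit; your version gives $a_0b_1+a_1b_0\in 1+\mathfrak{m}_B$, so your variant would serve Theorem \ref{th: strong_enescu_shimomoto_char_2} equally well. You should, however, state the lemma you actually prove --- with $a_1,b_0$ allowed in $\mathfrak{m}_B$ and the unit-determinant conclusion made explicit, or with the preliminary change of variables built in --- rather than the one given.
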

\begin{proof}
    Let $x = x_0$ and $y = x_1$ for simplicity. We can assume that $F = x^2+xy+y^2+Q_{d-2}(x_2,\dots,x_d)$ after a change of variables. Then, note that we can express $G$ in the following way:
\begin{equation}
\label{gee}
G = c + ax + by + Ax^2 + Bxy + Cy^2,
\end{equation}
where $c\in (x_2,\dots, x_d)^3,$ $a,b\in (x_2,\dots, x_d)^2,$ $A,B,C\in (x_2,\dots,x_d).$

Let $u := 1 + A$, $v := 1 + B$ and $w := 1 + C$, which are units. Due to Lemma \ref{lm: quadratic_equation_power_series_char_2}, there exists a unit $t$ such that $ut^2+vt+w = 0$. The result follows by letting 
\begin{gather*}
\begin{array}{lll}
    a_0 = u,& a_1 = t^{-1}w,&a_2=uv^{-1}b+t^{-1}wv^{-1}a,\\
    b_0 = 1,& b_1 = t,& b_2 = v^{-1}ta+v^{-1}b,
\end{array} 
\end{gather*}
and $G_1 = c+a_2b_2$.
\end{proof}
\begin{lemma}
    \label{lm: change_of_variables_xy}
    Using the notation above, there is a change of variables transforming $(a_0x+a_1y+a_2)(b_0x+b_1y+b_2)$ into $xy.$
\end{lemma}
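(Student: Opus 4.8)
The plan is to realize the change of variables as a formal automorphism of $B$ and reduce its invertibility to a single $2\times 2$ determinant. Write $x=x_0$ and $y=x_1$, and let $\phi$ be the continuous $k$-algebra endomorphism of $B = A[[x_0,x_1]]$ determined by
\[
\phi(x_0) = a_0x_0+a_1x_1+a_2,\qquad \phi(x_1)=b_0x_0+b_1x_1+b_2,\qquad \phi(x_i)=x_i\ (i\geq 2).
\]
Since $a_2,b_2\in(x_2,\dots,x_d)B\subseteq \mathfrak{m}_B$, the images $\phi(x_0),\phi(x_1)$ lie in $\mathfrak{m}_B$, so $\phi$ is a local homomorphism of the complete local ring $B$. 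Granting that $\phi$ is an automorphism, the element $\psi:=\phi^{-1}$ is the desired change of variables: from $\phi(x_0x_1)=(a_0x_0+a_1x_1+a_2)(b_0x_0+b_1x_1+b_2)$ we obtain $\psi\big((a_0x+a_1y+a_2)(b_0x+b_1y+b_2)\big)=x_0x_1=xy$.

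By the formal inverse function theorem, $\phi$ is an automorphism precisely when the induced map on $\mathfrak{m}_B/\mathfrak{m}_B^2$ is bijective, i.e.\ when the Jacobian matrix of constant terms is invertible. Ordering the basis as $x_0,\dots,x_d$, the rows coming from $\phi(x_i)=x_i$ for $i\geq2$ contribute an identity block and vanish in the first two columns; consequently the linear parts of $a_2$ and $b_2$ along $x_2,\dots,x_d$ are irrelevant, and the Jacobian determinant equals the determinant of
\[
M=\begin{pmatrix} a_0(0) & a_1(0) \\ b_0(0) & b_1(0)\end{pmatrix}.
\]
So everything reduces to checking $\det M\neq 0$.

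This is exactly where characteristic $2$ and the algebraic closure of $k$ enter. With $a_0=u$, $a_1=t^{-1}w$, $b_0=1$, $b_1=t$, and $u,v,w$ all having constant term $1$, evaluating the defining relation $ut^2+vt+w=0$ at the origin shows that $t(0)$ is a root of $T^2+T+1$. Such a root $\omega$ exists since $k=\bar{k}$; it is a primitive cube root of unity, so $\omega\neq 0,1$, $\omega^2=\omega+1$, and $\omega^3=1$. Hence $a_0(0)=b_0(0)=1$, $b_1(0)=\omega$, $a_1(0)=\omega^{-1}=\omega^2$, and
\[
\det M=\omega-\omega^2=\omega+\omega^2=\omega+(\omega+1)=1\neq 0,
\]
using $-1=1$ in characteristic $2$. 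Therefore $\phi$ is an automorphism and $\phi^{-1}$ transforms the product into $xy$.

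I expect no genuine obstacle here; the one point needing care is recognizing $t(0)$ as a cube root of unity and that the two roots $\omega,\omega^2$ are distinct, which is precisely what makes $M$ nonsingular. I would present the steps in this order: define $\phi$ and record that $\phi(x_0),\phi(x_1)\in\mathfrak{m}_B$; reduce invertibility to $\det M$ via the block structure of the Jacobian; then finish with the cube-root-of-unity computation, invoking the formal inverse function theorem to pass from bijectivity on $\mathfrak{m}_B/\mathfrak{m}_B^2$ to the automorphism property.
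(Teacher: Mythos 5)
Your proof is correct, and it reaches the same structural conclusion as the paper (invertibility of the $2\times 2$ matrix $M=\begin{pmatrix} a_0 & a_1\\ b_0 & b_1\end{pmatrix}$), but the verification is genuinely different. The paper's proof is a one-line algebraic identity: with $a_0=u$, $a_1=t^{-1}w$, $b_0=1$, $b_1=t$, the relation $ut^2+vt+w=0$ gives, in characteristic $2$, $\det M = a_0b_1+a_1b_0 = ut+t^{-1}w = t^{-1}(ut^2+w)=t^{-1}(vt)=v$, a unit of $B$; hence $M\in\mathrm{GL}_2(B)$ exactly, and the change of variables is $(x,y)\mapsto M^{-1}(x,y)^{T}$. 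You instead package the substitution as an endomorphism $\phi$ of the complete local ring, invoke the formal inverse function theorem, and check only the constant term, $\det M(0)=\omega-\omega^2=1$, by recognizing $t(0)$ as a primitive cube root of unity; this is consistent with the paper, since $v(0)=1$. Each route buys something. Yours is more careful on two points the paper's terse proof glosses over: the entries of $M$ are themselves power series (so they transform under the substitution, and \say{apply $M^{-1}$} needs interpretation), and the affine terms $a_2,b_2$ must be absorbed — defining $\phi$ by $x_0\mapsto a_0x_0+a_1x_1+a_2$, $x_1\mapsto b_0x_0+b_1x_1+b_2$ and taking $\phi^{-1}$ handles both automatically. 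The paper's identity $\det M=v$ is sharper (unit as a power series, not just nonvanishing at the origin) and exhibits the inverse explicitly without appealing to the inverse function theorem. One small remark: you do not need $k=\bar k$ to produce $\omega$ — the unit $t$ already exists by Lemma \ref{lm: quadratic_equation_power_series_char_2} (where algebraic closure was used), and $t(0)$ is then forced to satisfy $T^2+T+1=0$.
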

\begin{proof}
    Let us define a change of variables where $x_i\mapsto x_i$ for $i\geq 2$, and 
    \begin{gather*}
        \left(\begin{array}{c} x\\ y\end{array}\right)\mapsto \left(\begin{array}{cc} a_0 & a_1\\ b_0 & b_1\end{array}\right)^{-1}\left(\begin{array}{c} x\\ y\end{array}\right).
    \end{gather*}
    This matrix is invertible because its determinant is $a_0b_1+a_1b_0 = v,$ which is a unit.
\end{proof}
We will also use the following lemma in the next result.
\begin{corollary}[Structure theorem for quadratic forms in characteristic 2, Theorem 7.31, \cite{elkame}]
    \label{cor: diagonalization_quadratic_forms_char_2}
        Let $k$ be an algebraically closed field of characteristic 2. Let $Q$ be a quadratic form over $k$. Then, $Q = Q_r$ for some $r$, for $Q_r$ as in Definition \ref{df: Qd}.
    \end{corollary}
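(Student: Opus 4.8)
The plan is to prove the classification by induction on the number of variables, diagonalizing $Q$ into hyperbolic planes plus at most one leftover square, which is the normal form forced by the characteristic $2$ geometry. First I would introduce the polar form $B(u,v) = Q(u+v)+Q(u)+Q(v)$ and record that in characteristic $2$ it is an \emph{alternating} bilinear form with $B(v,v)=0$ for every $v$; equivalently, writing $Q=\sum_{i\le j}c_{ij}x_ix_j$, the form $B$ is encoded exactly by the off-diagonal coefficients $c_{ij}$ with $i<j$, while the diagonal coefficients $c_{ii}$ never contribute to $B$. This is the essential obstruction to the usual completing-the-square trick: since we cannot divide by $2$, the square terms cannot be eliminated individually by a change of variables as they are in odd characteristic, so the correct normal form is a sum of hyperbolic planes together with at most one surviving square.

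The inductive step splits according to whether $B\equiv 0$. If some off-diagonal coefficient is nonzero, then after reindexing and rescaling I may assume the coefficient of $x_0x_1$ is $1$, and I would isolate the binary part $c_{00}x_0^2+x_0x_1+c_{11}x_1^2$. Since $k$ is algebraically closed this factors as a product of two linear forms in $x_0,x_1$, and because its $x_0x_1$-coefficient is nonzero the two factors are linearly independent (a proportional pair would give a perfect square, whose $x_0x_1$-coefficient vanishes in characteristic $2$); hence they define an invertible substitution turning the binary part into $uv$. Collecting the remaining terms linear in $u,v$ yields $uv+uL_1+vL_2+R$ with $L_1,L_2$ linear and $R$ quadratic in $x_2,\dots,x_d$, and completing the product exactly as in Lemma \ref{lm: power_series_completing_squares_in_char_2}, namely $uv+uL_1+vL_2=(u+L_2)(v+L_1)+L_1L_2$, lets me set $x_0'=u+L_2$ and $x_1'=v+L_1$ and reduce to $x_0'x_1'$ plus the quadratic form $R+L_1L_2$ in the $d-1$ variables $x_2,\dots,x_d$, to which induction applies. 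If instead $B\equiv 0$, all cross terms vanish and $Q=\sum_i c_{ii}x_i^2$; here I would use perfectness of $k$ to write $c_{ii}x_i^2=(\sqrt{c_{ii}}\,x_i)^2$ and then the characteristic $2$ collapse $\sum_i y_i^2=(\sum_i y_i)^2$ to conclude that $Q$ is either $0$ or equivalent to a single square $z^2$.

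Assembling the induction gives $Q\sim x_0x_1+\dots+x_{2s-2}x_{2s-1}$ or $Q\sim x_0x_1+\dots+x_{2s-2}x_{2s-1}+z^2$, any variables on which $Q$ vanishes lying in its radical and being discarded. Reading off the parity identifies these with the forms of Definition \ref{df: Qd}: the purely hyperbolic case uses an even number $2s=d+1$ of variables and is $Q_d$ for $d$ odd, while the case with one extra square uses an odd number $2s+1=d+1$ of variables and is $Q_d$ for $d$ even. The main obstacle is exactly the one flagged above, that in characteristic $2$ the square terms are invisible to $B$ and cannot be removed one at a time; it is resolved by the two characteristic-$2$ facts used in the terminal case, namely that $\sum_i y_i^2$ is itself a perfect square and that algebraic closedness supplies both the square roots and the splitting of the binary forms needed to peel off each hyperbolic plane.
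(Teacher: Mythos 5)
Your proof is correct, but it takes a genuinely different route from the paper: the paper does not prove this statement at all, it simply cites the general structure theory of quadratic forms in characteristic $2$ (Theorem 7.31 in \cite{elkame}), whereas you give a self-contained induction. Your argument peels off hyperbolic planes one at a time using the alternating polar form $B(u,v)=Q(u+v)+Q(u)+Q(v)$ to detect cross terms, uses algebraic closedness exactly twice (to split the binary form $c_{00}x_0^2+x_0x_1+c_{11}x_1^2$ into two independent linear factors, and to extract square roots in the diagonal case), and finishes the terminal case with the Frobenius collapse $\sum_i y_i^2=\bigl(\sum_i y_i\bigr)^2$; the key manipulation $uv+uL_1+vL_2=(u+L_2)(v+L_1)+L_1L_2$ is the same completing-the-product device the paper already uses in Lemma \ref{lm: power_series_completing_squares_in_char_2}, so your proof is thematically consistent with the surrounding section. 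What each approach buys: the citation is shorter and leans on a classification valid over arbitrary fields of characteristic $2$ (where the answer involves more invariants, and one must then specialize to the algebraically closed case); your proof removes the external dependency, makes transparent why only the parity of the number of variables survives as an invariant over an algebraically closed field, and in fact only needs $k$ to be perfect and quadratically closed. Two cosmetic points worth flagging if you write this up: handle the zero form explicitly (it is not equal to any $Q_k$, which is also a latent edge case in the corollary as stated and is harmless in its application, where $Q\neq 0$), and note that the change of variables produced on $x_2,\dots,x_d$ by the inductive hypothesis must be extended by the identity on $x_0',x_1'$, which is immediate since your substitutions are block-triangular.
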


The following result is the extension of Theorem 3.4 in \cite{enshi} to characteristic 2.
\begin{theorem}
    \label{th: enescu_shimomoto_char_2}
    Let $(R,\m,k)$ be a non-regular hypersurface ring of characteristic 2. Then, either $\widehat{R}\cong R_{2,d}\widehat{\otimes}_{\mathbb{F}_2}k,$ or
        \begin{gather*}
        \eHK{R}\geq \eHK{S_{p,d}}> \eHK{R_{p,d}}.
        \end{gather*}
    \end{theorem}
    \begin{proof}
        Passing to the completion and extending the residue field to an uncountably infinite algebraically closed field does not change Hilbert–Kunz multiplicity. Thus, we can assume that $R = k[[x_0,...,x_d]]/(F),$ and we will write $F = Q+G$ where $Q$ is the homogeneous part of degree $2$ and $G\in \mathfrak{m}^3$. 
        
        Now, the fact that $R\ncong R_{d,2}\widehat{\otimes}_{\mathbb{F}_p} k$ implies that $Q$ is a degenerate quadric. Indeed, assume that $Q$ is a nondegenerate quadric. Lemma \ref{cor: diagonalization_quadratic_forms_char_2} states that after a linear change of variables, we have that $Q = Q_d$. Then, by successive application of Lemmas \ref{lm: power_series_completing_squares_in_char_2} and \ref{lm: change_of_variables_xy}, one can transform the entire power series $F$ into the quadric $Q_d$ via a series of isomorphisms. Thus, $R\cong R_{d,2}\widehat{\otimes}_{\mathbb{F}_p} k$.
        
        Hence, thanks to Lemma \ref{cor: diagonalization_quadratic_forms_char_2}, we can assume, after a suitable change of variables, that $Q = Q_r$ for some $r<d$. Let us now define the polynomial with which we will deform the hypersurface. Let $\delta = 0$ if the coefficient of $x_d^3$ in $G$ is non-zero, and $\delta = 1$, otherwise.
        \begin{enumerate}
            \item If $r$ odd, then define 
            \begin{equation*}
                g := \begin{cases}
                    x_{r+1}x_{r+2} + \dots + x_{d-1}^2 + \delta x_d^3 & \text{if $d$ is odd},\\
                    x_{r+1}x_{r+2} + \dots + x_{d-2}x_{d-1} + \delta x_d^3 & \text{if $d$ is even}.
                \end{cases}
            \end{equation*}
            \item If $r$ is even, then let
                \begin{equation*}
                    g = \begin{cases}
                        x_{r}x_{r+1} + \dots + x_{d-1}^2 + \delta x_d^3 & \text{if $d$ is odd},\\
                        x_{r}x_{r+1} + \dots + x_{d-2}x_{d-1} + \delta x_d^3 & \text{if $d$ is even}.
                    \end{cases}
                \end{equation*}
        \end{enumerate}
            Due to the dense upper semi-continuity of the Hilbert-Kunz multiplicity, Theorem 2.6 in \cite{enshi}, there exists a dense $V\subset k$ such that $\forall a\in \Lambda\setminus \lbrace 0 \rbrace,$ then
            \begin{equation*}
                e_{HK}\left(k[[x_0,\dots,x_d]]/(F+a g)\right) \leq e_{HK}\left(k[[x_0,\dots,x_d]]/(F)\right).
            \end{equation*} 
            We make a different transformation depending on the parity of $r$:
        \begin{enumerate}
            \item If $r$ is odd, then $F_a = F+a g$ takes the following expression
            \begin{equation*}
                x_0x_1 + \dots + x_{r-1}x_r + a x_{r+1}x_{r+2} + \dots + \begin{cases}
                a x_{d-1}^2 + a \delta x_d^3 + G & \text{if $d$ is odd},\\
                a \delta x_d^3 + G & \text{if $d$ is even}.
                \end{cases}
            \end{equation*}
            Rescaling the coordinates, we can assume that $a = 1$, $\delta = 1$, and $G \in \m^3$. 
            \item If $r$ is even, then the same argument we used above yields that
                \begin{equation*}
                    F_a = x_0x_1 + \dots + x_r^2 + a x_{r}x_{r+1} + \dots +
                    \begin{cases}
                    a x_{d-1}^2 + a \delta x_d^3 + G & \text{if $d$ is odd}\\
                    a \delta x_d^3 + G &\text{if $d$ is even}
                    \end{cases}
                \end{equation*}
                where we can again assume $a = 1$, $\delta = 1$ and $G \in \m^3$. Then transform $F_1$ via the change of variables defined by $x_{r} + x_{r+1}\mapsto x_{r+1}$ and $x_i\mapsto x_i$ for $i\neq r+1$ to obtain
            \begin{equation*}
            F_1 = x_0x_1 + \dots + x_{r}x_{r+1} + \dots +
            \begin{cases}
            x_{d-1}^2 + x_d^3 + G & \text{if $d$ is odd,}\\
            x_d^3 + G &\text{if $d$ is even.}
            \end{cases}
            \end{equation*}
        \end{enumerate}
        Now, if $d\geq 2$, Lemmas \ref{lm: power_series_completing_squares_in_char_2} and \ref{lm: change_of_variables_xy} imply that $F_a = x_0x_1 + P_{d-2} + G_1$ where $G_1\in (x_2,\dots,x_d)^3$. If $d-2\geq 2$, we can apply the lemmas again to $P_{d-2}+G_1$, and continue recursively until we obtain the following:
        \begin{equation*}
            F_1 = x_0x_1 + \dots + \begin{cases} x_{d-1}^2 + x_d^3 + G & \text{if } d \text{ odd, where } G\in (x_{d-1}^3,x_{d-1}^2x_d,x_{d-1}x_d^2,x_d^4), \\ x_d^3 + G & \text{if } d \text{ even, where } G\in (x_d)^4.\end{cases}
        \end{equation*}
        Let's study both cases separately.
        \begin{enumerate}
        \item Let $d$ be odd. The general case is easily deduced from the case $d = 1$, so let $x = x_0$ and $y = x_1$ for simplicity. Note that $G = Ax^3 + Bx^2y + Cxy^2 + Dy^3$ where $A,B,C,D\in k[[x,y]],$ and $D$ is not a unit, so
        \begin{align*}
        F_1 =&\ x^2 + y^3 + Ax^3 + Bx^2y + Cxy^2 + Dy^3\\
        =&\ (1+Ax+By)x^2 + ((1+D)y+Cx)y^2.
        \end{align*}
        Let $u := 1+D$, which is a unit. Doing the change of variables $x \mapsto x; y \mapsto u^{-1}(Cx+y),$
        \begin{gather*}
            (1+Ax+Bu^{-1}(Cx+y))x^2+yu^{-2}(Cx+y)^2 = vx^2+u^{-2}y^3,
        \end{gather*}
        where $v = 1+(A+Bu^{-1}C)x+(Bu^{-1}+C^2u^{-2})y$ is a unit. Now, observe that $F_1$ and $v^{-1}F_1$ define the same principal ideal, so we can assume that $F_1 = x^2+wx^3$, with $w = (vu^2)^{-1}$. By Lemma \ref{lm: units_cubic_roots_in_char2}, there exists a unit $t$ such that $t^3 = w$, so after applying the change of variables $x \mapsto x; y \mapsto t^{-1}y,$
        we finally map $F_1$ to $x^2+y^3.$
        \item If $d$ is even, we can then write $F_1 = x_0x_1+\dots+x_{d-2}x_{d-1}+x_d^3(1+H)$ where $H$ is not a unit. Let $w = 1+H$. Since $F_1$ and $w^{-1}F_1$ generate the same principal ideal, we can assume that $F_1 = w^{-1}x_0x_1+\dots+w^{-1}x_{d-2}x_{d-1}+x_{d}^3$. Finally, we can apply the change of variables
        \begin{gather*}
        \begin{cases}
            x_{2m} \mapsto w^{-1}x_{2m}, &m<d/2\\
            x_{2m+1} \mapsto x_{2m+1}, &m<d/2\\
            x_d \mapsto x_d,
        \end{cases}
        \end{gather*}
        which maps $F_1$ to $x_0x_1+\dots+x_{d-2}x_{d-1}+x_d^3.$
        \end{enumerate}
    To sum up, using dense upper semi-continuity, we have shown that if $R\ncong R_{p,d}\widehat{\otimes}_{\mathbb{F}_p} k$, we can deform the hypersurface $F$ down to an $A_2$ singularity by transformations that do not increase Hilbert–Kunz multiplicity.
    \end{proof}
Finally, with a slight modification of Enescu and Shimomoto's original argument, we prove the theorem above also in characteristic 3.
\begin{theorem}
    \label{th: enescu_shimomoto_char_3}
    Let $(R,\m,k)$ be a non-regular hypersurface ring of characteristic $3$. Then, either $\widehat{R}\cong R_{3,d}\widehat{\otimes}_{\mathbb{F}_3} k,$ or
    \begin{gather*}
    \eHK{R}\geq \eHK{S_{3,d}}> \eHK{R_{3,d}}.
    \end{gather*}
\end{theorem}
\begin{proof}
    We can assume that $R$ is complete and that the residue field is uncountably infinite and algebraically closed, so let $R = k[[x_0,\dots,x_d]]/(F)$. If we assume that $R \ncong R_{3,d}\widehat{\otimes}_{\mathbb{F}_3} k$, then using \ref{th: usc}, we can deform $F$ to $F+ag = F_a = v_0x_0^2 + \dots + v_{d-1}x^2_{d-1} + v_dx_d^3,$ where the $v_i$ are units, and the Hilbert-Kunz multiplicity of $k[[x_0,\dots,x_d]]/(F_a)$ is less than that of $R$. Since $F_a$ and $v_d^{-1}F_a$ generate the same ideal, we can assume that $v_d = 1$. We can now apply Lemma 3.3 in \cite{enshi} to solve the equations $w_i^2 = v_i,$ for $i<d.$ Then, $F_a = x_0^2 + \dots + x_{d-1}^2 + x_d^3,$ and the first inequality follows.
\end{proof}

    \subsection{The complete intersection case}\label{subsec:shorter_and_stronger_proof}
        Let us now prove that the Strong Watanabe–Yoshida conjecture for complete intersections can be reduced to the hypersurface case. This is a generalization of Theorem \ref{th: enescu_shimomoto_intro} to complete intersections of every positive characteristic and also a shorter proof. First, we provide another lemma about quadratic forms.
    \begin{lemma}
    Let $k$ be an algebraically closed field of any characteristic. Let $n\geq 2$ and $Q\in k[[x_1,\dots,x_n]]$ be a nondegenerate quadratic form. Then, there is a linear change of variables such that
    \begin{gather*}
    Q \mapsto Q'(x_1,\dots,x_{n-2})+x_{n-1}x_n.
    \end{gather*}
    \end{lemma}
    \begin{proof}
    See \cite{elkame}. This is a consequence of the structure theorems for quadratic forms: Proposition 7.29 in characteristic not 2, and Proposition 7.31 in characteristic 2.
    \end{proof}
    \begin{theorem}
        \label{th: reduction_to_hypersurface}
        Let $(R,\m,k)$ be a non-regular complete intersection such that $\widehat{R}\not\cong R_{p,d}\widehat{\otimes}_{\mathbb{F}_p} k$. Then, there exists a non-regular hypersurface $F$ such that
        \begin{gather*}
            \eHK{R}\geq \eHK{k[[x_0, ..., x_{d}]]/(F)}.
        \end{gather*}
        Furthermore, it can be shown that the hypersurface is not an $A_1$ singularity, i.e. 
        \begin{gather*}
            \frac{k[[x_0, ..., x_{d}]]}{(F)}\not\cong R_{p,d}\widehat{\otimes}_{\mathbb{F}_p} k.
        \end{gather*}
    \end{theorem}
    \begin{proof}
        We can assume that $R$ is complete and extend the residue field to make it uncountably infinite and algebraically closed without loss of generality. Note that the case of hypersurfaces is already settled. Hence, first, let us see that the problem for the rest of singular complete intersections reduces to codimension $2$. Thus, say
        \begin{gather*}
        R = \frac{k[[x_1,\dots,x_n]]}{(f_1,\dots,f_\ell)},
        \end{gather*}
        with $\ell\geq 3$, where $f_1,\dots,f_\ell\in (x_1,\dots,x_n)$ form a regular sequence. Note that it can be further assumed that $f_i\in \m^2$ for all $i$: indeed, if an element, say $f_\ell$, defines a regular hypersurface, then $R \stackrel{\varphi}\cong k[[y_1,\dots,y_{n-1}]]/(f_1',\dots,f_{\ell-1}')$ where if $f_i\in \m^2$, then $f_i':=\varphi(f_i)\in (y_1,\dots,y_{n-1})^2$.

        Now, by Theorem \ref{th: usc}, there exists a Zariski-dense set $V\subset k$ such that for every $a\in V$, the elements $f_1,\dots,f_\ell+ax_n$ form a regular sequence in $k[[x_1,\dots,x_n]]$, and 
        \begin{gather*}
        \eHK{R}\geq \eHK{\frac{k[[x_1,\dots,x_n]]}{(f_1,\dots,f_\ell + ax_n)}}.
        \end{gather*}
        In this new regular sequence, the element $f_\ell+ax_n$ defines a regular hypersurface, and therefore the same argument at the beginning shows that we can eliminate one variable and one equation thus obtaining another non-regular complete intersection of the same dimension but with one equation less:
        \begin{gather*}
            \frac{k[[x_1,\dots,x_n]]}{(f_1,\dots,f_\ell + ax_n)} \cong \frac{k[[y_1,\dots,y_{n-1}]]}{(f_1',\dots,f_{\ell-1}')},
        \end{gather*}
        and moreover, $f_i'\in (y_1,\dots,y_{n-1})^2$. Repeating this process while $\ell\geq 3$ yields that we can reduce the statement to singular complete intersections of codimension 2, that is,
    \begin{gather*}
    R = \frac{k[[x_0,\dots,x_{d+1}]]}{(f,g)}, \text{ where } f,g\in \m^2.
    \end{gather*}
Let us write $g = Q+G$ where $Q$ is the homogeneous part of degree $2$ of $g$ and $G\in \m^3$. For the theorem, we can assume that $Q$ is a nondegenerate quadric using dense upper semi-continuity: by Theorem \ref{th: usc}, there exists a Zariski-dense set $W\subset k$ such that for all $a\in W$, 
\begin{gather*}
    \eHK{R}\geq \eHK{\frac{k[[x_0,\dots,x_{d+1}]]}{(f,g+aQ_{d+1})}}.
\end{gather*}
Since there are only finitely many values in $k$ that make $g+aQ_{d+1}$ a degenerate quadric, we can choose $a\in W$ in such a way that it is nondegenerate.
    
    Hence, by the lemma above and after a linear change of variables, we can further assume that $Q = Q'(x_0,\dots,x_{d-1})+x_{d}x_{d+1}$.
    
    Now, again by the dense upper semi-continuity result \ref{th: usc}, there exists a Zariski-dense set $V\subset k$ such that for every $a\in V$, $f+ax_{d+1},g$ form a regular sequence, and
    \begin{gather*}
    \eHK{\frac{k[[x_0,\dots,x_{d+1}]]}{(f,g)}}\geq \eHK{\frac{A}{(f+ax_{d+1},g)}}.
    \end{gather*}
    After a linear change of variables, provided we choose $a\in V\setminus\lbrace 0\rbrace$, we can assume that $a=1$.

    Now, we will apply the Weierstrass preparation lemma to conclude our result. Let us write $x := x_{d+1}$ for simplicity, and $f = b_0+b_1x+b_2x^2+\dots$ where $b_i \in k[[x_0,\dots,x_d]]$. Since $f\in \m^2$, we have that 
    \begin{gather*}
        b_0,b_1 \in (x_0,\dots,x_{d})k[[x_0,\dots,x_{d}]]
    \end{gather*}
    which implies that $1+b_1$ is a unit in $k[[x_0,\dots,x_{d}]]$. Hence, the Weierstrass preparation lemma states that 
    \begin{gather}
        \label{fm: WP}
        x+f = u(x-b)
    \end{gather}
    where $u\in k[[x_0,\dots,x_{d+1}]]$ is a unit and $b\in (x_0,\dots,x_{d})k[[x_0,\dots,x_{d}]]$. 
    
    However, it can be proven that $b\in ((x_0,\dots,x_{d}) k[[x_0,\dots,x_{d}]])^2$. Let us denote by $[P]_i$ the homogeneous part of degree $i$ of a power series $P$. We already know that $[u]_0\neq 0$, $[b]_0 = 0$ and $[f]_0=[f]_1=0$ since $u$ is a unit, $b\in (x_0,\dots,x_{d})k[[x_0,\dots,x_{d}]]$ and $f\in \m^2$. So we only need to check that $[b]_1 = 0$. If we write out the degree 1 part of both sides of (\ref{fm: WP}), we get
    \begin{gather*}
        \left[x+f\right]_1 = x\\
        \left[u(x-b)\right]_1 = [u]_0x+[u]_0[b]_1
    \end{gather*}
    so $[u]_0x+[u]_0[b]_1 = x$. But $[b]_1\in k[[x_0,\dots,x_d]]$ cannot be a multiple of $x$, so $[b]_1 = 0$.

    We can finally obtain the result: since $(f+x,g) = (x-b,g)$ where $b \in ((x_0,\dots,x_{d}) k[[x_0,\dots,x_{d}]])^2$, then
    \begin{gather*}
    \frac{k[[x_0,\dots,x_{d+1}]]}{(f+x_{d+1},g)} \cong \frac{k[[x_0,\dots,x_{d}]]}{(g(x_0,x_2,\dots,x_{d},b(x_0,\dots,x_{d})))}
    \end{gather*}
    Now, note that $F = g(x_0,x_2,\dots,x_{d},b(x_0,\dots,x_{d}))$ is the hypersurface we were looking for, because 
    \begin{gather*}
        g(x_0,\dots,x_{d},b) = Q'(x_0,\dots,x_{d-1})+x_{d}b + G(x_0,\dots,x_{d},b),
    \end{gather*}
    where $x_{d}b\in (x_0,\dots,x_{d})^3$, and therefore the homogeneous part of degree two of $F$ is the quadric $Q'$, which cannot be transformed into $Q_d$. The series $F$ thus cannot define an $A_1$ singularity $R_{p,d}\widehat{\otimes}_{\mathbb{F}_p} k$.
    \end{proof}
    \begin{corollary}
        \label{maintheoremA}
        Let $(R,\m,k)$ be a non-regular complete intersection of characteristic $p>0$ and dimension $d\geq 2$. Then, either $\widehat{R}\cong R_{p,d}\widehat{\otimes}_{\mathbb{F}_p}k$, or
        \begin{gather}
        \eHK{R}\geq \eHK{S_{p,d}}> \eHK{R_{p,d}}.
        \end{gather}
        In particular, the Strong Watanabe–Yoshida conjecture holds for complete intersections in all positive characteristics.
    \end{corollary}
    \begin{proof}
    If $R$ is a complete intersection of codimension at least $2$, by Theorem \ref{th: reduction_to_hypersurface}, there exists a non-regular hypersurface $R' = k[[x_0,\dots,x_d]]/(F)$ of the same dimension such that $\eHK{R}\geq \eHK{R'}$, and such that $R'\neq R_{p,d}\widehat{\otimes}_{\mathbb{F}_p}k$.
    
    Then, the following inequalities follow from Theorem \ref{th: enescu_shimomoto_intro} for $p>3$, and from \ref{th: enescu_shimomoto_char_2} and \ref{th: enescu_shimomoto_char_3} for characteristics $2$ and $3$, respectively:
    \begin{gather*}
    \eHK{R'}\geq \eHK{S_{p,d}}\geq \eHK{R_{p,d}}.
    \end{gather*}
    Finally, that the second inequality is strict is due to Theorems \ref{cr: A_2_A_1_ineq_large_char} for $p>3$, \ref{th: A_1_A_2_ineq_char_2} and \ref{th: A_1_A_2_ineq_char_3} for characteristics $2$ and $3$, respectively.
    \end{proof}

    \begin{remarks}
        In the proof of Theorem \ref{th: reduction_to_hypersurface}, we start assuming that the ring has codimension 2, but leveraging on some inequalities from \cite{aben08}, we could further assume that the ring $R$ is a strongly $F$-regular ring with $\eHK{R}<\frac{4}{3}$. Computer experiments suggest that it is unlikely that a non-regular complete intersection of codimension 2 attains such a small Hilbert–Kunz multiplicity. This observation raises the question of whether there is a more direct way to show this result without relying on dense upper semi-continuity. Ian Aberbach informed me that he and Cătălin Ciupercă had indeed obtained $\eHK{R}\geq \eHK{R_{p,d}}$ for singular complete intersections, without relying on dense upper semi-continuity.

        It is also reasonable to wonder whether the stronger lower bound for complete intersections that Corollary \ref{maintheoremA} provides also works for non-complete intersections, i.e., whether an unmixed non-regular ring $R$ could exist such that $\eHK{S_{p,d}}> \eHK{R}> \eHK{R_{p,d}}$.
    \end{remarks}
\section{Revisiting low dimension and low multiplicity}\label{sec:revisiting_literature}

As mentioned in the introduction, the first part of the Watanabe–Yoshida conjecture \ref{watanabe_yoshida} has also been settled up to dimension $7$, and also for rings of Hilbert–Samuel multiplicity up to 5, as long as $p\neq 2$. The only instances where the strong part of the conjecture has also been settled are dimensions 2 through 4, provided that the characteristic is not 2. In dimensions 5 through 7, or in the low multiplicity cases, the strong part of the conjecture was not considered and did not always follow, since some of the arguments depended on the reduction to complete intersections, or the lower bounds were not strong enough for characteristic 2.

Furthermore, the characteristic 2 case is hardly ever covered in the literature; the proof of the two-dimensional case is characteristic-free, but it was not included as part of the general statement of the Watanabe–Yoshida conjecture until \cite{yos}.

The goal of this section is to fill these gaps in the literature: we will extend these low-dimensional (and low-multiplicity) cases of the conjecture to characteristic 2, and also show that the strong version of the conjecture follows. In other words, the main result of this section is the following:
\begin{theorem}[Watanabe–Yoshida conjecture]
    \label{th: watanabe_yoshida_low_dimension}
    Let $p>0$ be prime and $2\leq d \leq 6$, or $p>2$ and $d=7$. Set
    \begin{equation*}
        R_{p,d}:=\mathbb{F}_p[[x_0,...,x_d]]/(Q_d),
    \end{equation*}
    where
    \begin{equation*}
        Q_d = \begin{cases}x_0x_1+x_2x_3 + \dots + x_{d-1}x_d & \text{if $d$ is odd},\\
        x_0x_1 + x_2x_3 + \dots +x_{d-2}x_{d-1}+x_d^2 & \text{if $d$ is even}.\end{cases}
    \end{equation*}
    Let $(R,\m,k)$ be an unmixed non-regular local ring of dimension $d$ and characteristic $p$. Then
    \begin{enumerate}
        \item $e_{\text{HK}}(R)\geq e_{\text{HK}}(R_{p,d})$,
        \item if $\eHK{R}=\eHK{R_{p,d}}$, then $\widehat{R}\cong R_{p,d}\widehat{\otimes}_{\mathbb{F}_p} k$, i.e.~the completion of $R$ is isomorphic to the coordinate ring of a quadric hypersurface.
        \item $e_{\text{HK}}(R_{p,d})\geq 1+c_d$,
    \end{enumerate}
    where $c_d$ are the coefficients of the Taylor expansion of $\sec x + \tan x$.
    \end{theorem}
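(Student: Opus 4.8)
The plan is to combine the complete-intersection case, settled in Theorems \ref{maintheoremA} and \ref{maintheoremB}, with the existing low-dimensional literature, splitting the argument according to whether $p>2$ or $p=2$. For $p>2$ the first inequality and the weaker bound---parts (1) and (3)---are exactly the cited results \cite{wy05,wy01,aben12,ac24,yos}, valid for $d\le 7$, so for those primes the only genuinely new statement is the rigidity part (2) in dimensions $d=5,6$. The work therefore concentrates on three things: part (3) for $p=2$, parts (1) and (2) for $p=2$, and part (2) for $d=5,6$ in every characteristic. The unifying observation is that the three new inputs of this paper---the strict separation $\eHK{S_{p,d}}>\eHK{R_{p,d}}$ (Theorems \ref{cr: A_2_A_1_ineq_large_char}, \ref{th: A_1_A_2_ineq_char_2} and \ref{th: A_1_A_2_ineq_char_3}), the characteristic-$2$ Enescu–Shimomoto theorem (Corollary \ref{th: enescu_shimomoto_char_2} and Theorem \ref{th: strong_enescu_shimomoto_char_2}), and the closed forms for $\eHK{R_{2,d}}$ (Theorem \ref{th: HK_function_A_1_char_2})---are exactly what the Aberbach–Enescu machinery was missing to close these cases.

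Part (3) for $p=2$ is a finite computation. Theorem \ref{th: HK_function_A_1_char_2} gives $\eHK{R_{2,2m-1}}=2^m/(2^m-1)$ and $\eHK{R_{2,2m}}=(2^m+1)/2^m$, which for $2\le d\le 6$ take the values $3/2,\,4/3,\,5/4,\,8/7,\,9/8$; comparing with the tabulated $1+c_d$, namely $3/2,\,4/3,\,29/24,\,17/15,\,781/720$, yields equality at $d=2,3$ and strict inequality at $d=4,5,6$, so $\eHK{R_{2,d}}\ge 1+c_d$ holds throughout.

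For part (1) in characteristic $2$, I would re-run the multiplicity stratification underlying the cited $d\le 7$ result. When the Hilbert–Samuel multiplicity $e(R)$ is large, an Aberbach–Enescu lower bound forces $\eHK{R}$ above the explicit, close-to-$1$ values of $\eHK{R_{2,d}}$ recorded above; when $e(R)$ is small, the completion reduces to a hypersurface and Corollary \ref{th: enescu_shimomoto_char_2} supplies $\eHK{R}\ge\eHK{R_{2,d}}$ directly. The sole place the original proof invoked $p>2$ was this hypersurface step, now available in characteristic $2$. Part (2) proceeds in the same spirit: after reducing to the complete-intersection case by the same stratification, the Aberbach–Enescu argument reduces an equality $\eHK{R}=\eHK{R_{p,d}}$ to the separation of the $A_1$ and $A_2$ singularities, and feeding in $\eHK{S_{p,d}}>\eHK{R_{p,d}}$ (Theorems \ref{cr: A_2_A_1_ineq_large_char}, \ref{th: A_1_A_2_ineq_char_2} and \ref{th: A_1_A_2_ineq_char_3}) forces $\hat{R}\cong R_{p,d}\,\hat{\otimes}_{\F_p}k$. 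This resolves the rigidity for $d=5,6$ in every characteristic, and for $p=2$, exactly as it was previously known for $d\le 4$.

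The main obstacle I anticipate is bookkeeping rather than a single sharp estimate: one must verify that every multiplicity comparison and every reduction to the hypersurface case in the Aberbach–Enescu framework survives in characteristic $2$---the literature having stated them only for $p>2$---and that the threshold on $e(R)$ separating the large and small regimes is compatible with the small values of $\eHK{R_{2,d}}$ listed above. Once these checks are in place, the three parts assemble into the stated theorem for all $p$ and $2\le d\le 6$, together with $d=7$ when $p>2$.
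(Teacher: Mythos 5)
Your overall architecture matches the paper's proof: stratify by dimension and multiplicity, handle the small-multiplicity strata by reducing to hypersurfaces/complete intersections (where the characteristic-2 Enescu--Shimomoto theorem and Theorems \ref{maintheoremA}, \ref{maintheoremB} give both the inequality and the rigidity), handle the remaining strata with Aberbach--Enescu-type lower bounds; your part (3) computation for $p=2$ is also correct. The gap is your central claim that ``the sole place the original proof invoked $p>2$ was this hypersurface step,'' with everything else being bookkeeping. This is false, and it conceals exactly the work the paper's proof has to do. Since $\eHK{R_{p,d}}$ is decreasing in $p$, characteristic $2$ is the \emph{hardest} target, not a favorable one: for $d=4$ the value to beat jumps from $\frac{29p^2+15}{24p^2+12}\le\frac{23}{19}\approx 1.21$ (for $p\ge 3$) to $\frac{5}{4}$; for $d=5$ from $\approx 1.138$ to $\frac{8}{7}$; for $d=6$ from $\approx 1.09$ to $\frac{9}{8}$. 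The numerical bounds in \cite{wy05,wy01,aben12} were calibrated against the smaller $p\ge 3$ values and do not automatically clear these higher thresholds.

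Concretely, your plan breaks at three places that the paper must repair by hand. For $d=4$ and $e(R)=3$, the bound from Claim 1 of Theorem 4.3 of \cite{wy05} gives only $\eHK{R}\ge \frac{5}{4}$, which is \emph{equal} to $\eHK{R_{2,4}}$, so neither part (1) nor the strictness needed for part (2) follows; the paper reruns the estimate with $s=2.1$ in the volume formula (\ref{fm: formula_vs}) to get $\eHK{R}\ge 1.3>\frac{5}{4}$. For $d=5$ and $e(R)=137$, the available bound $e(R)/d!=\frac{137}{120}$ is strictly \emph{less} than $\eHK{R_{2,5}}=\frac{8}{7}$ (though it does beat the $p\ge 3$ values), so the stratification fails precisely in characteristic $2$; the paper needs a fresh application of Theorem 3.2 of \cite{aben12} with $r=135$, $s=1.4$. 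For $d=6$, the entire range $5\le e(R)\le 810$ must be redone with new choices of the parameter $s$ (the paper's Table \ref{tab: e_vs_s_dimension_6}), because the old bounds clear only $\approx 1.09$, not $\frac{9}{8}$. Relatedly, your rigidity argument requires every non-complete-intersection stratum to yield a \emph{strict} inequality $\eHK{R}>\eHK{R_{p,d}}$, which is exactly what fails in the first two examples; so the ``bookkeeping'' is the proof, and without these sharpened estimates the theorem is not established in characteristic $2$.
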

We recall the known formulas for $\eHK{R_{p,d}}$. In characteristic $p\geq 3$, dimensions $4$ through $6$, these where computed in \cite{yos}, and dimension 7 in \cite{ac24}. The values of $\eHK{R_{2,d}}$ are from Theorem \ref{th: HK_function_A_1_char_2}.
\begin{table}[h!]
\centering
\renewcommand{\arraystretch}{1.5}
\setlength{\tabcolsep}{10pt}
\begin{tabular}{c|c|c|c|c|c}
$d$ & 3 & 4 & 5 & 6 & 7\\ \hline
$\eHK{R_{p,d}}$, $p\geq 3$  & $\frac{4}{3}$ & $\frac{29p^2+15}{24p^2+12}$ & $\frac{17p^2+12}{15p^2+10}$ & $\frac{781p^4+656p^2+315}{720p^4+570p^2+270}$ & $\frac{332p^4 + 304p^2 + 192}{315p^4 + 273p^2 + 168}$\\ \hline
$\eHK{R_{2,d}}$  & $\frac{4}{3}$ & $\frac{5}{4} = \frac{30}{24}$ & $\frac{8}{7} = \frac{16}{14}$ & $\frac{9}{8} = \frac{810}{720}$ & $\frac{16}{15} = \frac{336}{315}$
\end{tabular}
\vspace{1em}
\caption{Hilbert–Kunz multiplicity of the quadric hypersurfaces in low dimensions.}
\label{tab: eHK_quadrics_low_dimension}
\end{table}

\vspace{-2em}
\begin{remark}
Note that $\eHK{R_{p,d}}$ is a decreasing function on $p>0$ for $4\leq d\leq 7$. Also, in a recent preprint \cite{pssy25}, it is proven using Ehrhart theory that $\eHK{R_{p,d}}$ can always be written as $f(p^2)/g(p^2)$ where $f$ and $g$ are polynomials of degree $\lfloor d/2\rfloor$
\end{remark}
\begin{lemma}[Proposition 2.13, \cite{wy00}]
    \label{lm: ehkrpd_decreasing_with_dimension}
If $x$ is an $R$-regular element, then $\eHK{R/xR}\geq \eHK{R}$. In particular, for all $p>2$ and $d\geq 1$, we have that $\eHK{R_{p,d}}\geq \eHK{R_{p,d+1}}$.
\end{lemma}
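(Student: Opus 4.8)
The plan is to establish the general inequality $\eHK{R/xR}\ge\eHK{R}$ by directly comparing Hilbert--Kunz functions, and then to deduce the monotonicity of $d\mapsto\eHK{R_{p,d}}$ by realizing $R_{p,d}$ as a hyperplane section of $R_{p,d+1}$. Throughout I may assume $x\in\m$, since if $x$ is a unit then $R/xR=0$ and there is nothing to prove. Writing $q=p^e$, the two observations that drive the argument are: first, that $x^q\in\m^{[q]}$ (because $x\in\m$), so that $(\m^{[q]},x^q)=\m^{[q]}$ and $\operatorname{length}(R/(\m^{[q]},x^q))=\HK_e(R)$; and second, that $\HK_e(R/xR)=\operatorname{length}(R/(\m^{[q]}+xR))$, since the maximal ideal $\bar\m$ of $R/xR$ satisfies $\bar\m^{[q]}=(\m^{[q]}+xR)/xR$.

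The crux is the elementary length bound $\HK_e(R)\le q\cdot\HK_e(R/xR)$. To prove it I would filter $R/\m^{[q]}=R/(\m^{[q]},x^q)$ by the chain $(\m^{[q]},x^q)\subseteq(\m^{[q]},x^{q-1})\subseteq\dots\subseteq(\m^{[q]},x)\subseteq R$, and observe that, for each $0\le j\le q-1$, multiplication by $x^j$ induces a surjection $R/(\m^{[q]},x)\twoheadrightarrow(\m^{[q]},x^{j})/(\m^{[q]},x^{j+1})$. Hence every successive quotient has length at most $\HK_e(R/xR)$, and summing the $q$ quotients telescopes to $\HK_e(R)\le q\cdot\HK_e(R/xR)$. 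Dividing by $q^d$ and using that $\dim R/xR=d-1$ — which holds because a nonzerodivisor lies in no minimal prime, combined with Krull's principal ideal theorem — gives $\HK_e(R)/q^d\le\HK_e(R/xR)/q^{d-1}$; passing to the limit $e\to\infty$ (both limits exist by Monsky's theorem) yields $\eHK{R}\le\eHK{R/xR}$.

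For the second assertion I would exploit the diagonal presentation available in characteristic $p>2$, namely $R_{p,d+1}\cong k[[x_0,\dots,x_{d+1}]]/(x_0^2+\dots+x_{d+1}^2)$. For $d\ge1$ this quadric has rank $d+2\ge3$, hence is irreducible, so $R_{p,d+1}$ is a domain and the element $x_{d+1}$, being nonzero in $R_{p,d+1}$, is a nonzerodivisor. Killing $x_{d+1}$ gives $R_{p,d+1}/(x_{d+1})\cong k[[x_0,\dots,x_d]]/(x_0^2+\dots+x_d^2)\cong R_{p,d}$, so the general inequality applied to $R=R_{p,d+1}$ and $x=x_{d+1}$ yields $\eHK{R_{p,d}}=\eHK{R_{p,d+1}/(x_{d+1})}\ge\eHK{R_{p,d+1}}$. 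The restriction $p>2$ is exactly what makes this clean: in characteristic $2$ the section $R_{2,d+1}/(x_{d+1})$ need not recover $R_{2,d}$, as the $x_d^2$ term is lost in the odd case.

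The main obstacle is the length inequality $\HK_e(R)\le q\cdot\HK_e(R/xR)$: once the filtration and the surjections $R/(\m^{[q]},x)\twoheadrightarrow(\m^{[q]},x^{j})/(\m^{[q]},x^{j+1})$ are in place, the factor $q$ precisely compensates the drop in dimension and the rest is formal. A minor but essential point to verify is that $x$ is genuinely a parameter, i.e.\ $\dim R/xR=d-1$, which is where the hypothesis that $x$ is a nonzerodivisor (rather than merely an element of $\m$) is needed.
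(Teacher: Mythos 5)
Your proof is correct. Note that the paper offers no proof of this lemma at all: it is quoted as Proposition 2.13 of Watanabe--Yoshida \cite{wy00}, so there is no internal argument to compare against. Your argument --- the filtration $R \supseteq (\m^{[q]},x) \supseteq \dots \supseteq (\m^{[q]},x^q) = \m^{[q]}$ with the surjections $R/(\m^{[q]},x) \twoheadrightarrow (\m^{[q]},x^j)/(\m^{[q]},x^{j+1})$, giving $\HK_e(R) \le q\cdot \HK_e(R/xR)$, followed by normalizing and passing to the limit --- is essentially the standard proof found in the cited source, and your derivation of the ``in particular'' statement (diagonalize for $p>2$, note the quadric of rank $d+2\ge 3$ is irreducible so $x_{d+1}$ is a nonzerodivisor, and cut by it) is a clean and complete way to get the hyperplane-section claim that the paper treats as immediate. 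The only quibble is cosmetic: when $x$ is a unit the statement is not ``vacuous'' but rather degenerate ($R/xR=0$), which is why an $R$-regular element in this context is by convention taken in $\m$; your restriction to $x\in\m$ is the right move in any case.
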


\begin{proof}[Proof of Theorem \ref{th: watanabe_yoshida_low_dimension}]
    We will focus on the first two statements, the strong part of the conjecture, since it has already been proven up to dimension 7 that $\eHK{R_{p,d}}\geq 1+c_d$.

    We prove the result in a series of claims, first solving dimensions 3 and 4, then multiplicity 2 to 4, and finally dimensions 5 and 6.

\begin{claim}
    The Strong Watanabe–Yoshida conjecture holds for dimension $d=3,4$ and characteristic $p>0$:
\end{claim}
    \begin{enumerate}
        \item Let $d = 3$ first. Lemma 3.3 in \cite{wy05} implies that if $e>2$, then $\eHK{R}>\frac{4}{3} = \eHK{R_{2,3}}$. Therefore, we only need to consider the case when $R$ is a double point, i.e., $ e=2$. In that case, there are two options: either $\eHK{R} = 2$, in whose case the result follows, or $\eHK{R}<2$, in whose case $R$ is a hypersurface by Corollary 1.10, \cite{wy05}. In this last case, the result follows from Theorem \ref{maintheoremA}.
        \item Now, let us discuss the four-dimensional case. We will carefully adjust the computations in the proof of Theorem 4.3 \cite{wy05} to obtain sharper bounds that work in the characteristic 2 case.
        \begin{enumerate}
            \item Let $e=2$. The same argument applies; if $\eHK{R}=2$, the result follows since $2>\eHK{R_{2,4}}$, and if $\eHK{R}<2$, then $R$ is hypersurface by Corollary 1.10 \cite{wy05} and the Strong Watanabe–Yoshida conjecture follows from Theorem \ref{maintheoremA}.
            \item Let $e = 3$. In the computation in Claim 1 in the proof of Theorem 4.3, \cite{wy05}, it is shown that $\eHK{R}\geq 3\left(v_s-\frac{2(s-1)^4}{4!}\right)$ for $s\geq 1$. For $s = 2$, this means $\eHK{R}\geq \frac{30}{24} = \frac{5}{4}$. To obtain the strict inequality with the value $\frac{5}{4}$, it is enough by setting $s = 2.1$, and using the formula
            \begin{gather}
                \label{fm: formula_vs}
                v_s = \sum_{n=0}^{\lfloor s\rfloor}(-1)^n\frac{(s-n)^d}{n!(d-n)!}
            \end{gather}
            from Section 3 of \cite{aben12}. This computation yields that $\eHK{R} \geq 1.3 >\frac{5}{4}.$
            \item Let $4\leq e\leq 9$. In the computation in Claim 1 in the proof of Theorem 4.3 in \cite{wy01}, it is shown that $\eHK{R}\geq \frac{(13-e)e}{24}$ for $3\leq e\leq 10$, and $\frac{(13-e)e}{24}>\frac{5}{4}$ whenever $4\leq e\leq 9$.
            \item Let $10\leq e\leq 30$. In Claim 2 in the proof of the same result, it is shown that $\eHK{R}\geq \frac{(78-e)e}{384}$, and $\frac{(78-e)e}{384}>\frac{5}{4}$ for every $10\leq e\leq 30$.
            \item Let $e\geq 31$. In this case, by the usual inequalities between the usual multiplicity and the Hilbert–Kunz multiplicity, we have that $\eHK{R}\geq \frac{e}{d!} \geq \frac{31}{24} >\frac{5}{4}.$
        \end{enumerate}
    \end{enumerate}

\begin{claim}
    \label{cl: low_multiplicity}
The Strong Watanabe–Yoshida conjecture holds for dimension $d\geq 3$, characteristic $p>0$, and multiplicity $e(R) = 2,3,4$.
\end{claim}
    First, observe that we can assume the ring is complete, and therefore the homomorphic image of a Cohen–Macaulay ring.
\begin{enumerate}
    \item If $e(R) = 2$, the Strong Watanabe–Yoshida conjecture follows from Corollary 1.10 in \cite{wy01} and Theorem \ref{maintheoremA}.
    \item As for $e(R) = 3$, if $\eHK{R}\geq \frac{3}{2}$, then $\eHK{R}>\eHK{R_{2,d}}$ for $d\geq 3$. If $\eHK{R}<\frac{3}{2}$, then $R$ is Gorenstein by Theorem 2.4 (iii) in \cite{aben12}, so if $R$ is not a complete intersection, Theorem 4.1 in \cite{aben12} implies that $\eHK{R}\geq 13/8 >\eHK{R_{2,d}}$ for all $d\geq 3$ – see Theorem \ref{th: HK_function_A_1_char_2}. Otherwise, $R$ is a complete intersection, and the result follows from Theorem \ref{maintheoremA}.
    \item Finally, let $e(R) = 4$. If $\eHK{R}\geq \frac{4}{3}$, again $\eHK{R}>\eHK{R_{2,d}}$ for $d\geq 4$. Otherwise, we have that $\eHK{R}<\frac{4}{3}$, and the same argument used in \cite{aben12} implies that $R$ has to be a complete intersection.
\end{enumerate}

\begin{claim}
The Strong Watanabe–Yoshida conjecture holds for dimension $d =5, 6$ and characteristic $p>0$.
\end{claim}
    We revisit the proof of Theorem 5.2 \cite{aben12}. First, we can assume that the residue field of $R$ is infinite, that $\eHK{R}<2$, and therefore that $R$ is a domain. Let $J$ be a minimal reduction of $\m$. In the proof of this theorem, Aberbach and Enescu show that $R$ is either a Cohen–Macaulay ring with minimal multiplicity or $\mu(\mathfrak{m}/J^*)\leq e-2$, where $J^*$ is the tight closure of $J$, thus we study these two cases separately:
    \begin{enumerate}
    \item In the minimal multiplicity case, either $\eHK{R_{p,d}}\geq 1.5$ or $R$ is a hypersurface, so in either case the Strong Watanabe–Yoshida conjecture holds in every positive characteristic by \ref{th: HK_function_A_1_char_2} and \ref{maintheoremA}.
    \item Otherwise, $\mu(\m/J^*)\leq e-2$ and we have to separate between dimensions 5 and 6.
    \begin{enumerate}
        \item Let $d = 5$. The bounds provided in Aberbach–Enescu are sharp for every multiplicity $e\neq 137$, i.e.~$\eHK{R}>\eHK{R_{p,d}}$. When $e = 137$, the lower bound is $e(R)/d! = \frac{137}{5!}<\eHK{R_{p,d}}$, so we need to improve the bound in this case, but simply applying Theorem 3.2 in \cite{aben12} and the expression (\ref{fm: formula_vs}) for $r = 135$ and $s = 1.4$ shows that $\eHK{R}\geq 4.5$.
        \item Let $d = 6$. Again, by the inequality between the Hilbert–Kunz multiplicity and the usual multiplicity, we have that for all $e\geq 811$, 
        \begin{gather*}
            \eHK{R}\geq \frac{811}{720} >\frac{810}{720} = \frac{9}{8}.
        \end{gather*}
        Let us now show the inequality when $5\leq e\leq 810$. For each $s\geq 1$, we define the function
        \begin{gather*}
            G_s(e) := e(v_s-(e-2)v_{s-1}) = -e^2v_s+(v_{s}+2v_{s-1})e.
        \end{gather*}
        We know from Theorem 3.2 in \cite{aben12} that $\eHK{R}\geq G_s(e)$ for every $s\geq 1$. We will prove that for every $5\leq e\leq 810$, $G_s(e)>\frac{9}{8}$.

        For example, if $s = 2.63$ and $5\leq e\leq 9$, it can be easily checked with a computer using (\ref{fm: formula_vs}) that $G_s(e)>\frac{9}{8}$. Table \ref{tab: e_vs_s_dimension_6} shows the value of $s$ that works for each $5\leq e\leq 810$.
        
        \begin{table}[h!]
        \centering
        \renewcommand{\arraystretch}{1.5} 
        \setlength{\tabcolsep}{10pt} 
        \begin{tabular}{c|c}
        $e$ & $s$ \\ \hline
        $5\leq e\leq 9$ & 2.63\\
        $10\leq e\leq 16$ & 2.31 \\
        $17\leq e\leq 28$ & 2.1 \\
        $29\leq e\leq 58$ & 1.9 \\
        $59\leq e\leq 296$ & 1.6 \\
        $297\leq e\leq 810$ & 1.3 \\
        \end{tabular}
        \vspace{1em} 
        \caption{Values of $s$ such that $G_s(e)>\eHK{R_{2,6}}$.}
        \label{tab: e_vs_s_dimension_6}
        \end{table}
    \end{enumerate}
    \end{enumerate}

\end{proof}

\begin{corollary}
    \label{cor: watanabe_yoshida_low_multiplicity}
The Strong Watanabe–Yoshida conjecture holds for dimension $d\geq 2$ and multiplicity $e(R)\leq 5$.
\end{corollary}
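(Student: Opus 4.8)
Since Claim \ref{cl: low_multiplicity} already covers $e(R)=2,3,4$, the only new case is $e(R)=5$, and the plan is to reduce it to the complete-intersection theorems and to the low-dimensional case. The key preliminary remark is that $e(R_{p,d})=2$, so $e(R)=5\neq 2$ forces $\hat R\not\cong R_{p,d}$; part (2) of the conjecture is therefore automatic and it suffices to establish the strict inequality $\eHK{R}>\eHK{R_{p,d}}$. For $2\leq d\leq 6$ (and for $d=7$ when $p>2$) this already follows from Theorem \ref{th: watanabe_yoshida_low_dimension}, which yields $\eHK{R}\geq\eHK{R_{p,d}}$ together with the characterization of equality, so that $\hat R\not\cong R_{p,d}$ rules equality out. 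Thus I would reduce to the range of large $d$ not covered there.

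In that range I would pass to the completion and assume $R$ is a homomorphic image of a regular local ring, and then split on whether $R$ is a complete intersection. If it is, Theorems \ref{maintheoremA} and \ref{maintheoremB} apply in every characteristic $p>0$ and give $\eHK{R}\geq\eHK{S_{p,d}}>\eHK{R_{p,d}}$, exactly as desired. If $R$ is not a complete intersection, I would run the same three-step argument used for $e(R)=3$ in Claim \ref{cl: low_multiplicity}: for $d\geq 3$ one has $\eHK{R_{p,d}}\leq\eHK{R_{p,3}}=\tfrac43$ by Lemma \ref{lm: ehkrpd_decreasing_with_dimension} and Table \ref{tab: eHK_quadrics_low_dimension}, so it is enough to bound $\eHK{R}$ below by a quantity exceeding $\tfrac43$. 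When $\eHK{R}$ is large this is immediate; when it is small, the smallness forces $R$ to be Gorenstein via the analogue of Theorem 2.4(iii) in \cite{aben12}, and a Gorenstein non-complete-intersection ring of multiplicity $5$ is bounded below by the relevant entry of Theorem 4.1 in \cite{aben12}, which again beats $\tfrac43$.

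The hard part will be that the Aberbach--Enescu multiplicity estimates were originally recorded only for $p>2$, whereas here I need them — and the reduction to the complete-intersection case — in characteristic $2$ as well, where even the weak inequality for $e(R)=5$ was not previously available. Concretely, I would confirm that the structural dichotomy (minimal multiplicity versus $\mu(\mathfrak{m}/J^*)\leq e-2$) and the lower bounds coming from the functions $v_s$ in (\ref{fm: formula_vs}) and from $\eHK{R}\geq G_s(e)$ (Theorem 3.2 of \cite{aben12}, used for $d=6$ in the proof of Theorem \ref{th: watanabe_yoshida_low_dimension}) are characteristic-free, so that the same numerical comparisons clear the threshold $\tfrac43$ in every $p>0$; since $\eHK{R_{p,d}}$ is decreasing in $d$ and at most $\tfrac43$ for $d\geq 3$, this final step reduces to a finite, machine-checkable calculation. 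The characteristic-$2$ complete-intersection input is supplied by Theorem \ref{maintheoremB}, which removes the only place where the original argument genuinely used $p\neq 2$.
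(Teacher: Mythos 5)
Your reduction is sound in its easy parts: $e(R)\leq 4$ via Claim \ref{cl: low_multiplicity}, $2\leq d\leq 6$ via Theorem \ref{th: watanabe_yoshida_low_dimension}, the observation that $e(R)=5\neq 2$ makes part (2) equivalent to the strict inequality, and the complete intersection case via Theorems \ref{maintheoremA} and \ref{maintheoremB}. But the case that genuinely remains --- $e(R)=5$, $R$ not a complete intersection, $d\geq 7$ --- is where your plan has a concrete gap, in two places. First, the structure-theoretic inputs you invoke (``the analogue of Theorem 2.4(iii)'' and ``the relevant entry of Theorem 4.1'' of \cite{aben12}, at multiplicity $5$ and in every characteristic) are exactly the content that would need to be stated and proved; deferring them to a later ``confirmation'' leaves the entire substance of the remaining case open, since the paper only ever uses those results at multiplicity $3$. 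Second, your proposed fallback --- a ``finite, machine-checkable calculation'' with the bounds $\eHK{R}\geq G_s(e)$ coming from $v_s$ in (\ref{fm: formula_vs}) and Theorem 3.2 of \cite{aben12} --- cannot work here: those bounds depend on the dimension, and for fixed $e(R)=5$ every term of $v_s$ is at most $s^d/(d-\lfloor s\rfloor)!$, so $v_s\to 0$ and $G_s(5)\to 0$ as $d\to\infty$. Hence no finite family of values of $s$ clears any threshold above $1$ for all $d\geq 7$; these estimates are useful for fixed dimension and varying multiplicity (which is how the paper uses them for $d=6$), not for fixed multiplicity and unbounded dimension.

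The paper's actual proof avoids all of this. It quotes the characteristic-free, dimension-free estimate from Section 3 of \cite{aben12}: every unmixed non-regular ring with $e(R)=5$ satisfies $\eHK{R}\geq 1.112$, with no CI/non-CI or Gorenstein dichotomy. Note that $1.112<\tfrac{4}{3}$, which is precisely why your reduction to ``beat $\eHK{R_{p,3}}=\tfrac{4}{3}$'' is a wrong turn: that target is unnecessarily high. The paper instead lowers the target by monotonicity: for $d\geq 7$, Lemma \ref{lm: ehkrpd_decreasing_with_dimension} together with the monotonicity in $p$ recorded in Table \ref{tab: eHK_quadrics_low_dimension} gives $\eHK{R_{p,d}}\leq\eHK{R_{p,7}}\leq\eHK{R_{2,7}}=\tfrac{16}{15}$ (Theorem \ref{th: HK_function_A_1_char_2}), and $\tfrac{16}{15}<1.112$, so the strict inequality holds for all $p>0$ and $d\geq 7$ in one stroke. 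If you wish to keep your route, you must actually verify multiplicity-$5$, characteristic-free versions of Theorems 2.4(iii) and 4.1 of \cite{aben12}; the simpler repair is to replace your target $\tfrac{4}{3}$ by $\tfrac{16}{15}$ and use the uniform bound $1.112$, at which point your dichotomies become unnecessary.
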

\begin{proof}
    The cases of multiplicity $e(R) = 2,3,4$ follow from Claims \ref{cl: low_multiplicity} and Watanabe–Yoshida in dimension 2. Thus, let $e(R) = 5$. Section 3 of \cite{aben12} shows a characteristic free argument that $\eHK{R}\geq 1.112$. By Theorem \ref{th: HK_function_A_1_char_2}, $\eHK{R_{2,7}} = \frac{16}{15}<1.112$. The result follows from the fact that $\eHK{R_{p,7}}$ is a decreasing function (see Table \ref{tab: eHK_quadrics_low_dimension} above), and Lemma \ref{lm: ehkrpd_decreasing_with_dimension}.
\end{proof}

\begin{theorem}
    The Strong Watanabe–Yoshida conjecture holds for dimension $d=7$ and characteristic $p>2$.
\end{theorem}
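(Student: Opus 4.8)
The plan is to adapt the strategy already used above for $d=5,6$: revisit the proof of Aberbach--Enescu's dimension bound in \cite{aben12} in dimension $7$ and sharpen the numerical estimates. First I note that for $p>2$ the first and third statements of the conjecture in dimension $7$ are already available from \cite{ac24} (and the references collected in the introduction), so the only thing left to establish is the equality characterization, part $(2)$. Equivalently, assuming $\hat R\not\cong R_{p,7}$, I must prove the strict inequality $\eHK{R}>\eHK{R_{p,7}}$. Since $\eHK{R_{p,7}}$ is decreasing in $p$ (see Table \ref{tab: eHK_quadrics_low_dimension}), for every $p>2$ one has $\eHK{R_{p,7}}\leq \eHK{R_{3,7}}=\frac{71}{67}$, with equality only at $p=3$; hence it suffices to prove $\eHK{R}>\frac{71}{67}$, the borderline characteristic $p=3$ and the complete intersection case being handled directly by Theorems \ref{maintheoremA} and \ref{maintheoremB}, which already supply the strict inequality $\eHK{R}\geq\eHK{S_{p,7}}>\eHK{R_{p,7}}$.

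Next I would carry out the standard reductions. By Corollary \ref{cor: watanabe_yoshida_low_multiplicity} the result holds whenever $e(R)\leq 5$, so I may assume $e(R)\geq 6$. After completing and passing to an infinite residue field, if $\eHK{R}\geq 2$ then $\eHK{R}\geq 2>\frac{71}{67}\geq \eHK{R_{p,7}}$ and there is nothing to prove; I may therefore assume $\eHK{R}<2$, which by \cite{aben12} forces $R$ to be a domain. Taking a minimal reduction $J$ of $\m$, Aberbach and Enescu's dichotomy applies: either $R$ is Cohen--Macaulay of minimal multiplicity, or $\mu(\m/J^*)\leq e-2$. In the minimal multiplicity case, either $\eHK{R}\geq \frac{3}{2}>\frac{71}{67}$, or $R$ is a hypersurface, hence a complete intersection, and the conclusion follows from Theorems \ref{maintheoremA} and \ref{maintheoremB}.

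The remaining case $\mu(\m/J^*)\leq e-2$ is where the numerical work concentrates. Here I would use the characteristic-free lower bound $\eHK{R}\geq G_s(e)=e(v_s-(e-2)v_{s-1})$ from Theorem 3.2 of \cite{aben12}, together with the formula (\ref{fm: formula_vs}) for $v_s$ specialized to $d=7$. For large multiplicities the crude inequality $\eHK{R}\geq e/7!=e/5040$ already gives $\eHK{R}>\frac{71}{67}$ as soon as $e\geq 5341$, so only the finitely many values $6\leq e\leq 5340$ remain. For these I would partition the range into subintervals and, exactly as in Table \ref{tab: e_vs_s_dimension_6}, exhibit for each subinterval a value of $s$ with $G_s(e)>\frac{71}{67}$ throughout; this is a finite computation verifiable by computer.

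The main obstacle I anticipate is precisely this last verification: because $\eHK{R_{3,7}}=\frac{71}{67}$ is very close to $1$, the threshold beyond which the multiplicity bound $e/5040$ suffices is large, so the table of admissible $s$-values must cover a far wider range of $e$ than in the six-dimensional case, and one must confirm that a single $s$ keeps $G_s(e)>\frac{71}{67}$ on each subinterval while the subintervals together exhaust $6\leq e\leq 5340$. A secondary point requiring care is strictness at $p=3$: one must ensure that every bound invoked in the non-complete-intersection cases exceeds $\frac{71}{67}$ \emph{strictly}, and that the complete intersection subcase is routed through Theorems \ref{maintheoremA} and \ref{maintheoremB} so as to obtain the strict inequality there as well.
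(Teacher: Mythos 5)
Your reduction framework is sound and matches the shape of the paper's low-dimensional arguments: parts (1) and (3) are indeed known in dimension $7$ for $p>2$; the complete intersection case is correctly routed through Theorems \ref{maintheoremA} and \ref{maintheoremB}; $e(R)\leq 5$ is Corollary \ref{cor: watanabe_yoshida_low_multiplicity}; and $\eHK{R_{3,7}}=\frac{71}{67}$ is the right threshold by monotonicity in $p$. But at the decisive step you diverge from the paper, and your route breaks down. The paper does \emph{not} rerun the dimension-$5$/$6$ estimates in dimension $7$: for non-complete-intersection rings it invokes the series of bounds in Section 4 of \cite{ac24}, which are strictly sharper than Theorem 3.2 of \cite{aben12} and were developed precisely because the Aberbach--Enescu bound is too weak in dimension $7$.

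Concretely, the computer verification you propose cannot succeed, because for small and moderate multiplicities \emph{no} choice of $s$ makes $G_s(e)>\frac{71}{67}$. Take $e=6$ and compute $v_s$ from (\ref{fm: formula_vs}) with $d=7$: maximizing $G_s(6)=6(v_s-4v_{s-1})$ over all $s\geq 1$ (the maximum occurs near $s\approx 3.16$) gives
\begin{gather*}
\max_{s\geq 1} G_s(6)\approx 1.037<\frac{71}{67}\approx 1.0597 .
\end{gather*}
The failure is not isolated. For $s=1+t\in[1,2)$ one has $G_s(e)=\frac{e}{5040}\bigl((1+t)^7-(e+5)t^7\bigr)$, whose maximum over $t$ equals $\frac{e(e+5)}{5040\left((e+5)^{1/6}-1\right)^{6}}$; checking the other ranges of $s$ as well, one finds that $\max_s G_s(e)$ dips to roughly $0.74$ for $e$ near $30$--$60$ and only climbs back above $\frac{71}{67}$ around $e\approx 270$. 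So the entire range $6\leq e\lesssim 270$ is out of reach of Theorem 3.2 of \cite{aben12}, and no table of $s$-values can exhaust $6\leq e\leq 5340$. (Contrast dimension $6$, where the same bound stays above $\frac{9}{8}$ on the whole range, which is why the paper's Table \ref{tab: e_vs_s_dimension_6} exists; the margin collapses in dimension $7$ because $\eHK{R_{3,7}}$ is so close to $1$.) The repair is essentially the paper's proof: in the non-complete-intersection case, replace the $G_s(e)$ argument by the bounds of Section 4 of \cite{ac24}, which do clear $\frac{71}{67}$ for every relevant multiplicity --- but, as the paper's closing remark records, do not clear $\eHK{R_{2,7}}=\frac{16}{15}$ when $6\leq e(R)\leq 10$, which is exactly why the statement is restricted to $p>2$.
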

\begin{proof}
    If the ring is a complete intersection, this is again a consequence of \ref{maintheoremA}. If the ring is not a complete intersection, Aberbach and Cox-Steib provide a series of bounds in Section 4 of \cite{ac24} which are easily seen to be sharp whenever $p>2$ using the fact that $\eHK{R_{p,7}}$ is a decreasing function on $p>2$ and that $\eHK{R_{3,7}} \approx 1.056$.
\end{proof}

\begin{remark}
    Note that $\eHK{R_{2,7}} \approx 1.066$. All the bounds provided by Aberabach and Cox-Steib in dimension 7 are (positive) characteristic-free, and are enough to cover the characteristic 2 case when $e(R)\geq 11$, but not when $6\leq e(R)\leq 10$.
\end{remark}

\bibliographystyle{alpha}
\bibliography{swyci}
\end{document}